\numberwithin{equation}{section}
\theoremstyle{plain}
\newtheorem{theorem}[equation]{Theorem}
\newtheorem*{mainthm}{Main Theorem}
\newtheorem{lemma}[equation]{Lemma}
\newtheorem{proposition}[equation]{Proposition}
\newtheorem{corollary}[equation]{Corollary}
\theoremstyle{remark}
\newtheorem{remark}[equation]{Remark}
\theoremstyle{definition}
\newtheorem{definition}[equation]{Definition}
\newtheorem{notation}[equation]{Notation}
\newcommand{\bP}{\mathbb{P}}
\newcommand{\PP}{\mathbb{P}}
\newcommand{\bQ}{\mathbb{Q}}
\newcommand{\bC}{\mathbb{C}}
\newcommand{\calO}{\mathscr{O}}
\newcommand{\OO}{\mathcal{O}}
\newcommand{\Pic}{\operatorname{Pic}}
\newcommand{\Hilb}{\operatorname{Hilb}}
\newcommand{\SL}{\mathrm{SL}}
\newcommand{\SO}{\mathrm{SO}}
\newcommand{\Sing}{\mathrm{Sing}}
\newcommand{\Chow}{\operatorname{Chow}}
\newcommand{\rank}{\mathrm{rank}}
\newcommand{\gquot}{/\!\!/}
\DeclareMathOperator{\Proj}{Proj}
\title[VGIT for genus $4$ curves]{Log canonical models and variation of GIT for genus four canonical curves}
\author[S. Casalaina-Martin]{Sebastian Casalaina-Martin } \address{University of Colorado, Department of Mathematics, Boulder, CO 80309-0395} \email{casa@math.colorado.edu}
\author[D. Jensen]{David Jensen}
\address{Stony Brook University, Department of Mathematics,  Stony Brook, NY 11794}
\email{djensen@math.sunysb.edu}
\author[R. Laza]{Radu Laza}
\address{Stony Brook University, Department of Mathematics,  Stony Brook, NY 11794}
\email{rlaza@math.sunysb.edu}
\thanks{The  first author was partially supported by NSF grant DMS-1101333.  The  third author was partially supported by NSF grant DMS-0968968 and a Sloan Fellowship}
\date{\today}
\begin{document}
\begin{abstract}
We discuss GIT for canonically embedded genus four curves and the connection to the Hassett--Keel program. A canonical genus four curve is a  complete intersection of a quadric and a cubic, and, in contrast to the genus three case, there is a family of GIT quotients that depend on a choice of linearization. We discuss  the corresponding VGIT problem and show that the resulting spaces give the final steps in the Hassett--Keel program for genus four curves.
\end{abstract}

\bibliographystyle{alpha}
\maketitle

%%%%%%%%%%%%%%%%%%%%%%%%%%%%%%%%%%%%%%
% sintro
\section*{Introduction}\label{introlabel}
The Hassett--Keel program aims to give modular interpretations of certain log canonical models of $\overline M_g$, the moduli space of stable curves of fixed genus $g$, with the ultimate goal of giving a modular interpretation of the canonical model for the case $g\gg 0$.   The program, while relatively new, has attracted the attention of a number of researchers, and has rapidly become one of the most active areas of research concerning the moduli of curves.   Perhaps the most successful approach so far has been to compare these log canonical models to alternate compactifications  of $M_g$ constructed  via GIT on the spaces $\Hilb_{g,\nu}^m$, the so-called $m$-th Hilbert spaces of $\nu$-canonically embedded curves of genus $g$,  for ``small'' $\nu$ and $m$ (e.g.~\cite{hh}, \cite{hh2},  \cite{ha}).

\smallskip

For large genus, completing the program in its entirety still seems somewhat out of reach.
On the other hand, the case of low genus curves affords  a  gateway to the general case, providing motivation and corroboration of expected behavior.   The genus $2$ and $3$ cases were completed  recently (\cite{hg2}, \cite{hl}).
 In this paper, we study the genus $4$ case by focusing on the spaces $\operatorname{Hilb}_{4,1}^m$; i.e.~we study GIT quotients of canonically embedded genus  $4$  curves. The main result is a complete description of GIT stability on $\Hilb_{4,1}^m$ for  all $m$, as well as a proof  that  the resulting GIT quotients give the final steps in the Hassett--Keel program for genus $4$.  Together with previous work on the subject (see \cite{hl2}, \cite{maksymg4}, \cite{g4ball}),  this completes  the program in genus $4$ outside of a small range.

\smallskip

One of the key features of this paper is the technique employed.
Using a space we denote by $\bP E$ (a smooth, elementary, birational model of the Hilbert scheme parameterizing complete intersections) we fit all of the Hilbert quotients for canonical genus  $4$  curves into  a single variation of GIT problem (VGIT). In other words,  the final steps of the Hassett--Keel  program in genus  $4$  are described by a VGIT problem on a single space.    Also of interest is a technical point that arises: we are forced to do VGIT for linearizations that lie outside of the ample cone.  \emph{A priori} this leads to an ambiguity in the meaning of Mumford's numerical criterion for stability. However we are able to circumvent this issue to provide a complete analysis of the stability conditions on $\bP E$.

\smallskip

While examples of GIT for hypersurfaces are abundant in the literature (e.g. \cite[\S4.2]{GIT}, \cite{shah}, \cite{allcock1}, \cite{laza}), this appears to be one of the first examples of GIT for complete intersections (see however \cite{avmir} and \cite{mm} for $(2,2)$ complete intersections, and Benoist \cite{benoist} for some generic stability results in a situation similar to ours).  Furthermore, unlike the projective spaces parameterizing hypersurfaces or the Grassmannian parameterizing complete intersections of type $(d,\dots,d)$, the natural parameter space in our situation has Picard rank two, and thus provides a natural setting for variation of GIT.  We believe the techniques we develop in this paper for studying VGIT for spaces of complete intersections will have a number of further applications beyond moduli spaces of curves.

\subsection*{The Hassett--Keel program for genus  $4$  curves: known and new results}
To put our results in context, we recall some background on  the Hassett--Keel program.
Namely, for $\alpha\in [0,1]$, the log minimal models of $\overline{M}_g$ are defined to be the projective varieties
$$ \overline{M}_g ( \alpha ) := \Proj \left( \bigoplus_{n=0}^{\infty} H^0 \left(n(K_{\overline{M}_g} + \alpha \delta ) \right) \right), $$
where $\delta$ is the boundary divisor in $\overline{M}_g$. Hassett and Hyeon have explicitly constructed the log minimal models $\overline{M}_g ( \alpha )$ for $\alpha \geq \frac{7}{10} - \epsilon$ (see \cite{hh,hh2}).  Hyeon and Lee have also described the next stage of the program in the specific case that $g=4$ (see \cite{hl2}): as $\alpha$ decreases from $\frac{2}{3}+\epsilon$ to $\frac{2}{3}$, they construct a map that contracts the locus of Weierstrass genus 2 tails, replacing them with $A_4$ singularities. Thus, the known spaces for the Hassett--Keel program in genus $4$ are:
\begin{equation}\label{eqknown}
\xymatrix@R=.15in @C=.15in{
\overline{M}_4 = \overline{M}_4 [1,\frac{9}{11}) \ar@{=>}[d] & & & \\
\overline{M}_4^{ps} = \overline{M}_4 [ \frac{9}{11},\frac{7}{10} ) \ar[rd] \ar@{-->}[rr]& & \overline{M}_4^{hs} = \overline{M}_4 ( \frac{7}{10} , \frac{2}{3} ) \ar[ld] \ar[rd] & \\
 & \overline{M}_4^{cs} = \overline{M}_4 ( \frac{7}{10} ) & & \overline{M}_4 ( \frac{2}{3} )  }
 \end{equation}
 where the notation $\overline M_g(I)$ for an interval $I$ means $\overline M_g(\alpha)\cong \overline M_g(\beta)$ for all $\alpha,\beta \in I$.  The double arrows correspond to divisorial contractions, the single arrows to small contractions, and the dashed arrows to flips.

\smallskip

The main result of the paper is the construction of the log minimal models $\overline{M}_4 ( \alpha )$ for $\alpha \leq \frac{5}{9}$ via a VGIT analysis of canonically embedded curves in $\bP^3$.

\begin{mainthm}
For $\alpha \le \frac{5}{9}$, the log minimal models $\overline{M}_4(\alpha)$ arise as GIT quotients of the parameter space $\bP E$.  Moreover, the VGIT problem gives us the following diagram:

\begin{equation}\label{mainthm}
\xymatrix @R=.07in @C=.07in{
 & \overline{M}_4 ( \frac{5}{9} , \frac{23}{44} ) \ar@{-->}[rr]\ar[ldd] \ar[rdd] & & \overline{M}_4 ( \frac{23}{44} , \frac{1}{2} ) \ar[ldd] \ar[rdd] \ar@{-->}[rr]& & \overline{M}_4 ( \frac{1}{2} , \frac{29}{60} ) \ar[ldd] \ar@{=>}[rdd] & \\
&&&&&& \\
\overline{M}_4 ( \frac{5}{9} ) & & \overline{M}_4 ( \frac{23}{44} ) & & \overline{M}_4 ( \frac{1}{2} ) & & \overline{M}_4 [\frac{29}{60},\frac{8}{17}) \ar[dd] \\
&&&&&&\\
 & & & & & & \overline{M}_4 (\frac{8}{17} )   = \{*\}
 }
 \end{equation}

 More specifically,
 \begin{itemize}
 \item[i)] the end point $\overline{M}_4 ( \frac{8}{17}+\epsilon)$ is obtained via GIT for $(3,3)$ curves on $\bP^1 \times \bP^1$ as discussed in \cite{maksymg4};
 \item[ii)] the other end point $\overline{M}_4 ( \frac{5}{9} )$ is obtained via GIT for the Chow variety of genus 4 canonical curves as discussed in \cite{g4ball};
 \item[iii)] the remaining spaces   $\overline{M}_4 ( \alpha ) $ for $\alpha$ in the range $\frac{8}{17} < \alpha<  \frac{5}{9}$  are obtained via appropriate $\Hilb^m_{4,1}$ quotients, with the exception of $\alpha= \frac{23}{44}$.
 \end{itemize}
\end{mainthm}

Thus in genus  $4$, the remaining unknown range for the Hassett--Keel program is the interval $\alpha\in ( \frac{5}{9} , \frac{2}{3} )$.  Using the geometric meaning of the spaces  $\overline{M}_4 (\alpha)$ for $\alpha\le \frac{5}{9}$ and the  predictions of \cite{afs}, we expect  that there are exactly two more critical values:  $\alpha = \frac{19}{29}$, when   the divisor $\delta_2$ should be contracted to a point, and $\alpha = \frac{49}{83}$, when   the locus of curves with hyperelliptic normalization obtained by introducing a cusp at a Weierstrass point should be flipped, being replaced by curves with $A_6$ singularities.  We do not expect that these models can be obtained by further varying the GIT problem we consider here.  In fact, since each of these predicted models arises prior to the predicted flip of the hyperelliptic locus ($\alpha=\frac{5}{9}$), they should be unrelated to spaces of canonical curves.  It is believed that each of these two intermediate models ought to correspond to a quotient of the Hilbert scheme of bicanonical curves.

\subsection*{GIT for canonical genus  $4$  curves} As already mentioned, GIT for pluricanonical  curves has  long been  used to produce projective  models for the moduli space of curves. For example Mumford used asymptotic stability for $\nu$-canonical curves, with $\nu \geq 5$, to show the projectivity of $\overline M_g$, and recently the case $\nu<5$ has been used  in the Hassett--Keel program.  The basic idea is that as the values $\nu$ and $m$ decrease one should obtain the log minimal models $\overline{M}_g ( \alpha )$ for progressively smaller values of $\alpha$ (e.g.~\cite[Table 1]{fsmoduli}). Thus from the perspective of the Hassett--Keel program, it is of interest to understand GIT for canonically embedded curves.
This turns out to be difficult, and to our knowledge the only case where the stability conditions have been described completely prior to this paper is for genus $3$.  On the other hand, it was recently proved (see \cite{afs11-1}) that the generic non-singular canonical curve of arbitrary genus is stable.  In this paper, we completely describe the stability conditions for genus $4$ canonical curves.

\smallskip

We set up the analysis of the GIT stability for canonical genus $4$ curves as follows. The canonical model of a smooth, non-hyperelliptic genus  $4$  curve  is a $(2,3)$-complete intersection in $\bP^3$. A natural parameter space for complete intersections is a projective bundle $\bP E \to \bP^9$ on which $G=\SL(4,\bC)$ acts naturally. Since $\rank(\Pic (\bP E))=2$, the GIT computation involves a choice of linearization parameterized by  $t\in\bQ_+\cup \{0\}$  (corresponding to the linearization $\eta+t h$, where $\eta$ is the pullback of $\calO(1)$ from the space of quadrics $\bP^9$  and $h$ is the relative $\calO(1)$). In this paper, we analyze the geometry of the quotients $\bP E\gquot_t \SL(4)$ as the linearization varies and relate them to the Hassett--Keel spaces $\overline M(\alpha)$.  We note that a related setup for GIT for complete intersections occurs in recent work of Benoist \cite{benoist}.

\smallskip

One naturally identifies two special cases. First, for $0<t\ll 1$ one easily sees that $\bP E\gquot_t \SL(4)$ coincides with the  GIT quotient for $(3,3)$ curves on $\bP^1\times \bP^1$; this was analyzed  by Fedorchuk \cite{maksymg4}. At the other extreme, the case $t=\frac{2}{3}$ was shown in \cite{g4ball} to be isomorphic to both the quotient of the Chow variety for genus  $4$  canonical curves,  as well as to the Hassett--Keel space $\overline M_4(\frac{5}{9})$. The content of this paper is to describe the GIT quotient for the intermediary values $t\in(0,\frac{2}{3})$. We work with $\bP E$, but  show that all of the  quotients of type $\Hilb^m_{4,1}\gquot\SL(4)$ arise in this way.
The advantage of working with $\mathbb PE$ is that we have VGIT on a single, elementary space, where the stability computation is  straightforward and  corresponds directly to the variation of parameters.

\subsection*{Geometric description of the birational maps in the main theorem} As mentioned, the Hassett--Keel program aims to give modular interpretations to the spaces $\overline M_g(\alpha)$ and to the birational maps between them. Essentially, as $\alpha$ decreases, it is expected that $\overline M_g(\alpha)$ parameterizes curves with increasingly complicated singularities, and at the same time special curves from $\overline M_g$ are removed (e.g. curves with elliptic tails, or hyperelliptic curves, etc.). In the situation of our main result, the maps of the diagram \eqref{mainthm} are intuitively described as follows.  In $\overline{M}_4 ( \frac{5}{9} )$, the hyperelliptic locus is contracted, as is the locus of elliptic triboroughs, and the locus of curves in $\Delta_0$ with hyperelliptic normalization obtained by gluing two points that are conjugate under the hyperelliptic involution.  The next map flips these loci, replacing them with curves that have $A_8$, $D_4$, and $A_7$ singularities, respectively.

\smallskip

The second flip (at $\alpha=\frac{23}{44}$) removes the locus of cuspidal curves whose normalization is hyperelliptic, replacing them with curves possessing a separating $A_7$ singularity.  The third flip  (at $\alpha=\frac{1}{2}$) removes the locus of nodal curves whose normalization is hyperelliptic, replacing them with the union of a conic and a double conic.  Finally, the map to $\overline{M}_4 ( \frac{29}{60} )$ contracts the Gieseker--Petri divisor to a point, corresponding to a triple conic.  This geometric description of the various maps is summarized in Tables \ref{table1} and \ref{table2} in \S \ref{sectboundary}.

\smallskip

We note that the critical slopes occurring in our analysis are in concordance with the general predictions of Alper--Fedorchuk--Smyth \cite{afs}. We also note that at $\alpha = \frac{5}{9}$ and $\alpha = \frac{23}{44}$, we observe a phenomenon that first occurs in genus $4$.  Namely, the critical values at which the separating $A_5$ and $A_7$ singularities appear differ from those at which the non-separating singularities appear.

\subsection*{Acknowldegements}
The authors are grateful to O.~Benoist and M.~Fedorchuk for discussions relevant to this paper, and for specific comments on an earlier draft.  We also thank the referees for detailed comments that  have improved the paper.

\subsection*{Table of spaces}
The following table, relating the parameters $\alpha$, $t$ and $m$,  describes the relationships among the various spaces occurring in this paper. Note that the following relations (see Proposition \ref{comparepehilb} and Theorem \ref{teopehk}) hold:
$$t = \frac{34 \alpha - 16}{33 \alpha - 14}, \ \ \alpha=\frac{14t-16}{33t-34}, \ \
t=\left\{
\begin{array}{ll}
\frac{m-2}{m+1} & 2\le m\le 4, \\
&\\
\frac{2m^2-8m+8}{3m^2-9m+8}& m\ge 5.\\
\end{array}
\right.
$$

\begin{table}
$$
\renewcommand\arraystretch{1.4}
\begin{array}{|c|c|c|c|c|}
\hline
 \overline M_g(\alpha)&\mathbb PE\gquot_t \SL(4)&m&\operatorname{Hilb}_{4,1}^m\gquot_{\Lambda_m} \SL(4)& \text{Other}\\
\hline \hline
\frac{5}{9}&\frac{2}{3}&\infty &-&\operatorname{Chow}_{4,1}\gquot_{\Lambda_\infty}\SL(4)\\
\hline
\left(\frac{23}{44},\frac{5}{9}\right)&\left(\frac{6}{11},\frac{2}{3}\right)& \left(\frac{17+\sqrt{129}}{4},\infty\right) &  \ge 8&\\
\hline
\frac{23}{44}&\frac{6}{11}&\frac{17+\sqrt{129}}{4}&-&\\
\hline
\left(\frac{1}{2},\frac{23}{44}\right)&\left(\frac{2}{5},\frac{6}{11}\right)&\left(4,\frac{17+\sqrt{129}}{4}\right) &5,6,7&\\
\hline
\frac{1}{2}&\frac{2}{5}&4& 4 &\\
\hline
\left(\frac{29}{60},\frac{1}{2}\right)&\left(\frac{2}{9},\frac{2}{5}\right) & \left(\frac{20}{7},4\right)& 3&\\
\hline
\left(\frac{8}{17},\frac{29}{60}\right]&\left(0,\frac{2}{9}\right] & \left(2,\frac{20}{7}\right] &-&\left|\mathscr O_{\mathbb P^1\times \mathbb P^1}(3,3)\right|\gquot \SO(4)\\
\hline
\frac{8}{17}&0 & 2&2&\\
\hline
\end{array}
$$
\vskip .2 cm
\caption{Relationship among the spaces appearing in this paper.}\label{tablecon}
\end{table}

%%%%%%%%%%%%%%%%%%%%%%%%%%%%%%%%%%%%%%
% s1
\section{$\mathbb PE$ and its geometry}
In this section we recall the projective bundle $\mathbb PE$ considered in \cite{g4ball} (see also \cite{benoist} for a more general setup) parameterizing subschemes of $\mathbb P^3$ defined by a quadric and a cubic.  The primary aim is to describe various rational maps from $\mathbb PE$ to projective space and their induced polarizations in terms of standard generators for the Picard group.

\subsection{Preliminaries}
\label{Preliminaries}

We start by recalling the definition of the bundle $\mathbb PE$   from \cite{g4ball} and establishing some basic properties.   We fix the notation
$$
V_d:=H^0(\mathbb P^3,\mathscr O_{\mathbb P^3}(d))
$$
for each $d\in \mathbb Z$, and  define $\mathcal Q$ to be the universal quadric:
$$
\begin{CD}
\mathcal Q @>>> \mathbb P^3\times \mathbb P V_2\\
@VVV @VV \pi_2 V\\
\mathbb P V_2 @= \mathbb PV_2.
\end{CD}
$$
  There is an exact sequence of sheaves
\begin{equation}\label{eqnuniquad}
0\to \mathcal I_{\mathcal Q}\to \mathscr O_{\mathbb P^3\times \mathbb P V_2}\to \mathscr O_{\mathcal Q}\to 0.
\end{equation}
 Setting $\pi_1: \mathbb P^3\times \mathbb PV_2\to \mathbb P^3$ (resp. $\pi_2: \mathbb P^3\times \mathbb PV_2\to \mathbb PV_2$) to be the first (resp. second) projection, then   tensoring \eqref{eqnuniquad} by $\pi_1^*\mathscr O_{\mathbb P^3}(3)$ and projecting with $\pi_{2*}$ we obtain an exact sequence:
\begin{equation}\label{eqndfnE}
0\to \pi_{2 *}(\mathcal I_{\mathcal Q}\otimes \pi_1^*\mathcal O_{\mathbb P^3}(3))\to \pi_{2 *}(\pi_1^*\mathcal O_{\mathbb P^3}(3))\to \pi_{2*}(\mathscr O_{\mathcal Q}\otimes \pi_1^*\mathcal O_{\mathbb P^3}(3))\to 0.
\end{equation}

We will define the projective bundle $\mathbb PE$ using the locally free sheaf on the right.

\begin{definition}
In the notation above, let  $\mathscr E:=\pi_{2*}(\mathscr O_{\mathcal Q}\otimes \pi_1^*\mathcal O_{\mathbb P^3}(3))$, $E:=\underline{\operatorname{Spec}}_{\mathbb PV_2}(\mathscr E^\vee)$
  and $\mathbb P E:=\underline{\operatorname{Proj}}_{\mathbb PV_2}(\mathscr E^\vee)$.  We denote the natural projection as $\pi:\mathbb PE \to \mathbb PV_2$.
\end{definition}

\begin{remark}\label{remdefu}
Points of $\mathbb PE$ correspond to pairs $([q],[f])$ where $[q]\in \mathbb PV_2$ is the class of a non-zero  element  $q\in V_2$, and $[f]\in \mathbb PE_{[q]}$ is the class of a non-zero element $f\in V_3/\langle x_0 q, x_1q,x_2q,x_3q\rangle$.  Sometimes we will instead   consider $f$ as an element of $V_3$ not lying in the span of $\langle x_0 q, x_1q,x_2q,x_3q\rangle$.  We will often write $(q,f)$ rather than $([q],[f])$ if there is no chance of confusion.
This description motivates  calling $\mathbb PE$ \emph{the space of $(2,3)$-subschemes in $\mathbb P^3$}.  Throughout, we will write $U \subset \mathbb PE$ for the open subset of points $(q,f)$ such that $q$ and $f$ do not have a common factor.  Note there is a non-flat family of sub-schemes of $\mathbb P^3$  over $\mathbb PE$ that restricts to a flat family over $U$.
\end{remark}

We point out that
\begin{equation}\label{eqndfnEsimp}
\pi_{2 *}(\pi_1^*\mathcal O_{\mathbb P^3}(3))\cong V_3\otimes _{\mathbb C}\mathscr O_{\mathbb PV_2}
\ \ \text {and }  \ \
\pi_{2\ast}\left(\mathcal I_{\mathcal Q}\otimes \pi_1^*\mathcal O_{\mathbb P^3}(3))\right)\cong V_1 \otimes_{\mathbb C}  \mathscr O_{\mathbb P V_2}(-1),
\end{equation}
so \eqref{eqndfnE} can be written as
\begin{equation}\label{eqnefp} 0\to V_1 \otimes_{\mathbb C}  \mathscr O_{\mathbb P V_2}(-1)\to V_3\otimes _{\mathbb C}\mathscr O_{\mathbb PV_2} \to \mathscr E\to 0. \end{equation}

\begin{remark}\label{reminvts} With this description  of $\mathscr E$, it is easy to describe many of the invariants of $E$ and $\mathbb PE$.
Setting $x=c_1(\mathscr O_{\mathbb PV_2}(1))$, the Chern character of $E$ is $
\operatorname{ch(E)}=20-4\sum_{k=0}^\infty \frac{(-1)^kx^k}{k!}$.
Denoting the line bundles  $\eta=\pi^*\mathscr O_{\mathbb P V_2}(1)$ and $h=\mathscr O_{\mathbb PE}(1)$,  it is standard that $\Pic(\bP E)\cong \mathbb Z \eta \oplus \mathbb Z h$,
and $$K_{\mathbb PE}=-14\eta-16h. $$  We define the  \textbf{slope} of a line bundle $a\eta+bh$ (with $a\ne 0$) to be equal to $t=\frac{b}{a}$.
\end{remark}

\subsection{Morphisms to projective space}

As mentioned above, there is a family
$$
\begin{CD}
\mathcal C @>>> \mathbb P^3\times \mathbb P E\\
@VVV @VV \pi_2 V\\
\mathbb PE @= \mathbb PE.
\end{CD}
$$
of  $(2,3)$-subschemes of $\mathbb P^3$ parameterized by $\mathbb PE$ that is flat exactly over the locus $U$ of points $([q],[f])$ such that $q$ and $f$ do not have a common linear factor.
Consequently,
there is a birational map
$$
\mathbb PE \dashrightarrow \operatorname{Hilb}_{4,1}
$$
whose restriction to $U$ is a morphism; here  $\operatorname{Hilb}_{4,1}$ is the component of the Hilbert scheme containing genus $4$ canonical curves.

  \subsubsection{The moduli space of curves}
  \label{ModuliSpace}
The rational map $\operatorname{Hilb}_{4,1}\dashrightarrow \overline M_4$ induces a rational map
$$
\mathbb PE \dashrightarrow \overline M_4.
$$
Setting $\lambda$ and $\delta$ to be the pull-backs of the corresponding classes on $\overline M_4$ one can check  (e.g. \cite[\S 1]{g4ball}) that
\begin{eqnarray*}
\lambda &=& 4\eta+4 h,\\
\delta &=&33\eta+ 34 h.
\end{eqnarray*}
Conversely, $\eta=\frac{17}{2} \lambda-\delta$ and $h=-\frac{33}{4} \lambda+\delta$.

\subsubsection{Grassmannians}
 For each point in $\operatorname{Hilb}_{4,1}$, we have an associated ideal sheaf $\mathcal I\subseteq \mathscr O_{\mathbb P^3}$.
The generic point of $\operatorname{Hilb}_{4,1}$ corresponds to a canonical curve, so that $\mathcal I$ is the sheaf associated to a homogeneous ideal of the form $(q,f)\subseteq \mathbb C[X_0,\ldots,X_3]$ where $q$ is a quadric and $f$ is a cubic.
Since $q$ and $f$ have no common irreducible factors in this case, we get the following resolution of the ideal sheaf  $\mathcal I $:
$$
0\longrightarrow \mathscr O_{\mathbb P^3}(-5)\stackrel{(f,-q)}{\longrightarrow } \mathscr O_{\mathbb P^3}(-2)\oplus \mathscr O_{\mathbb P^3}(-3)\stackrel{\binom{q}{f}}{\longrightarrow } \mathcal I \longrightarrow 0.
$$
It follows that $$ k_m:=h^0(\mathcal I(m))= \binom {m+1}{3}+\binom{m}{3}-\binom{m-2}{3}. $$ Set $$ n_m=h^0(\mathscr O_{\mathbb P^3}(m))=\binom{m+3}{3}. $$
With this notation, there is a rational map $\psi_m:\operatorname{Hilb}_{4,1}\dashrightarrow \mathbb G(k_m,n_m)$, and recall that $\operatorname{Hilb}_{4,1}^m$ is defined to be the closure of the image of $\psi_m$.
  The Pl\"ucker embedding induces a linearization $\Lambda_m$ on $\operatorname{Hilb}_{4,1}^m$.
Composing  the rational map $\mathbb PE \dashrightarrow \operatorname{Hilb}_{4,1}$ with $\psi_m$ defines a rational map
$$\varphi_m:\mathbb PE\dashrightarrow \operatorname{Hilb}_{4,1}^m$$
that restricts to a morphism on the open set $U\subseteq\mathbb PE$.

Since $\mathbb PE$ is smooth, and $\operatorname{codim}_{\mathbb PE}(\mathbb PE \setminus U)\ge 2$, each line bundle on $U$ has a unique extension to a line bundle on $\mathbb PE$; in other words, the restriction map $\operatorname{Pic}(\mathbb PE)\to \operatorname{Pic}(U)$ is an isomorphism.
Since the restriction of $\varphi_m$ to $U$ is regular, there is a well defined pull-back $$\varphi_m^*:\operatorname{Pic}(\operatorname{Hilb}^m_{4,1})\to \operatorname{Pic}(\mathbb PE)$$ given by the composition $\operatorname{Pic}(\operatorname{Hilb}^m_{4,1})\xrightarrow{(\varphi_m|_U)^*}\operatorname{Pic}(U)\to  \operatorname{Pic}(\mathbb PE)$.

\begin{proposition}
\label{comparepehilb}
For all $m\in \mathbb N$
there is a rational map
$$
\varphi_m:\mathbb PE \dashrightarrow \operatorname{Hilb}_{4,1}^m
$$
that restricts to a morphism on the open set $U\subseteq\mathbb PE$ of points $([q],[f])$ such that $q$ and $f$ do not have a common linear factor.  The pull-back of the polarization $\Lambda_m$ on $\operatorname{Hilb}_{4,1}^m$ is given by
the formula
$$
\varphi_m^* \Lambda_m
=
\left(\binom {m+1}{3}-\binom{m-2}{3}\right)\eta+\left(\binom{m}{3}-\binom{m-2}{3}\right)h,$$
where we use the convention that $\binom{a}{b}=0$ if $a<b$.
In particular, the slope of $\varphi^*_m\Lambda_m$ is given by
$$
t=\left\{
\begin{array}{ll}
\frac{m-2}{m+1} & 2\le m\le 4 \\
&\\
\frac{2m^2-8m+8}{3m^2-9m+8}& m\ge 5.\\
\end{array}
\right.
$$
\end{proposition}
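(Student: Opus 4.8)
The map $\varphi_m$ and its regularity on $U$ have already been produced, so the only genuine content is the polarization formula; the slope is then a mechanical simplification. The plan is to compute $\varphi_m^\ast\Lambda_m$ by restricting to $U$ (which is how the pullback was defined) and identifying $\varphi_m^\ast$ of the tautological sub-bundle. Recall that under the Pl\"ucker embedding $\mathbb{G}(k_m,n_m)\hookrightarrow \mathbb{P}(\wedge^{k_m}V_m)$ the polarization $\Lambda_m$ is $\mathcal{O}(1)$, whose pullback is $(\det S)^{-1}$, where $S$ is the rank-$k_m$ universal sub-bundle with fibre the chosen subspace of $V_m$. Over $U$ the morphism $\varphi_m$ sends $(q,f)$ to the point $H^0(\mathcal{I}_{(q,f)}(m))\subseteq V_m$, so $\varphi_m^\ast S$ is the vector bundle on $U$ whose fibre over $(q,f)$ is $H^0(\mathcal{I}_{(q,f)}(m))$. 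Since $\Pic(\bP E)\cong\Pic(U)$, it suffices to compute the determinant of this bundle as a class in $\Pic(\bP E)$.

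The key step is to relativize the resolution of $\mathcal{I}$ over $\bP E$ while keeping track of the $\eta$- and $h$-twists. On $\mathbb{P}^3\times \bP E$ the universal quadric is a section of $\mathcal{O}_{\mathbb{P}^3}(2)\otimes \eta$ (this is precisely the twist recorded by the identification of $\pi_{2\ast}(\mathcal{I}_{\mathcal Q}\otimes \pi_1^\ast\mathcal{O}_{\mathbb{P}^3}(3))$ with $V_1\otimes \mathcal{O}_{\mathbb{P}V_2}(-1)$ in \eqref{eqndfnEsimp}), while the universal cubic is a section of $\mathcal{O}_{\mathbb{P}^3}(3)\otimes h$. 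Hence, over the flat locus where $q,f$ is a regular sequence, the relative Koszul resolution reads
\begin{equation*}
0\to \mathcal{O}_{\mathbb{P}^3}(-5)\otimes \eta^{-1}h^{-1}\xrightarrow{(f,-q)} \big(\mathcal{O}_{\mathbb{P}^3}(-2)\otimes \eta^{-1}\big)\oplus\big(\mathcal{O}_{\mathbb{P}^3}(-3)\otimes h^{-1}\big)\xrightarrow{\binom{q}{f}} \mathcal{I}_{\mathcal C}\to 0 .
\end{equation*}
I would then twist by $\mathcal{O}_{\mathbb{P}^3}(m)$ and apply $\pi_{2\ast}$. For every $m\ge 2$ all higher direct images of $\mathcal{O}_{\mathbb{P}^3}(m-2)$, $\mathcal{O}_{\mathbb{P}^3}(m-3)$, $\mathcal{O}_{\mathbb{P}^3}(m-5)$ vanish (the only possible obstruction on $\mathbb{P}^3$ is $H^3$, nonzero only in degrees $\le -4$), so pushing forward preserves exactness and yields a short exact sequence of vector bundles on $U$
\begin{equation*}
0\to V_{m-5}\otimes \eta^{-1}h^{-1}\to \big(V_{m-2}\otimes \eta^{-1}\big)\oplus\big(V_{m-3}\otimes h^{-1}\big)\to \varphi_m^\ast S\to 0 ,
\end{equation*}
where $\pi_{2\ast}\mathcal{O}_{\mathbb{P}^3}(k)=V_k\otimes\mathcal{O}$ is trivial of rank $n_k=\binom{k+3}{3}$ (and $0$ for $k<0$).

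Taking determinants and using $\det(V_k\otimes L)=L^{n_k}$ gives $\det(\varphi_m^\ast S)=\eta^{-(n_{m-2}-n_{m-5})}h^{-(n_{m-3}-n_{m-5})}$, whence $\varphi_m^\ast\Lambda_m=(\det\varphi_m^\ast S)^{-1}=(n_{m-2}-n_{m-5})\eta+(n_{m-3}-n_{m-5})h$. Substituting $n_{m-2}=\binom{m+1}{3}$, $n_{m-3}=\binom{m}{3}$, $n_{m-5}=\binom{m-2}{3}$ produces the stated coefficients, the convention $\binom{a}{b}=0$ for $a<b$ matching $n_k=0$ for $k<0$. For the slope I would simplify $t=b/a$ directly: a short computation gives $b=\binom{m}{3}-\binom{m-2}{3}=(m-2)^2$ and $a=\binom{m+1}{3}-\binom{m-2}{3}=\tfrac12(3m^2-9m+8)$ for $m\ge 5$, so $t=\frac{2m^2-8m+8}{3m^2-9m+8}$; for $2\le m\le 4$ the term $\binom{m-2}{3}$ vanishes and $t=\binom{m}{3}/\binom{m+1}{3}=\frac{m-2}{m+1}$.

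The hard part is the bookkeeping in the second step: correctly pinning down that $q$ contributes an $\eta$ and $f$ contributes an $h$, so that the single ambiguity of $f$ modulo $\langle x_iq\rangle$ is absorbed by the $\eta^{-1}$-summand and the resolution maps are well defined on the nose over $U$. Everything else is formal once one also fixes the convention that $\Lambda_m$ is the \emph{inverse} determinant of the sub-bundle, which is what makes the resulting class effective with the positive coefficients appearing in the statement.
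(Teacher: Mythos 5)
Your overall route---identify $\Lambda_m$ with $(\det S)^{-1}$ for the tautological subbundle $S$ of the Pl\"ucker embedding, compute $\varphi_m^*S$ over $U$ as the pushforward of the twisted universal ideal sheaf, and take determinants---is the natural one (the paper offers no proof, declaring the statement ``left to the reader''), and your final formula and slope arithmetic are correct. However, the step you yourself single out as ``the hard part'' is false as stated, and the complex you write down does not exist. The universal cubic is \emph{not} a section of $\mathcal O_{\mathbb P^3}(3)\otimes h$ on $\mathbb P^3\times U$: with the paper's convention $\mathbb PE=\underline{\operatorname{Proj}}_{\mathbb PV_2}(\operatorname{Sym}\mathscr E^\vee)$ one has $\pi_*h=\mathscr E^\vee$, so by the projection formula and Hartogs (the complement of $U$ has codimension $\geq 2$ in the smooth variety $\mathbb PE$)
$$H^0\bigl(\mathbb P^3\times U,\ \mathcal O_{\mathbb P^3}(3)\boxtimes h\bigr)\cong V_3\otimes H^0(\mathbb PE,h)\cong V_3\otimes H^0(\mathbb PV_2,\mathscr E^\vee)=0,$$
where the last vanishing holds because dualizing \eqref{eqnefp} exhibits $H^0(\mathscr E^\vee)$ as the kernel of the transpose of the surjective multiplication map $V_1\otimes V_2\to V_3$. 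Thus there is no nonzero morphism $\mathcal O_{\mathbb P^3}(-3)\boxtimes h^{-1}\to\mathcal O_{\mathbb P^3\times U}$ at all, in particular no Koszul resolution of $\mathcal I_{\mathcal C}$ with middle term $(\mathcal O_{\mathbb P^3}(-2)\otimes\eta^{-1})\oplus(\mathcal O_{\mathbb P^3}(-3)\otimes h^{-1})$. What exists canonically is a section of $\mathcal O(3)\otimes h$ only on the universal quadric $\mathcal Q_E\subset\mathbb P^3\times\mathbb PE$ (adjoint to $h^{-1}\hookrightarrow\pi^*\mathscr E$); the ambiguity $f\mapsto f+\ell q$ is not ``absorbed''---it is exactly the obstruction (the nonsplit extension class of \eqref{eqnefp}) that prevents a global lift, and it forces the middle term of any global resolution to be a nonsplit extension of $\mathcal O(-3)\otimes h^{-1}$ by $\mathcal O(-2)\otimes\eta^{-1}$.

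The gap is repairable, and the repair shows your formula survives because determinants see only the associated graded, not a splitting. Replace the Koszul complex by the two canonical short exact sequences on $\mathbb P^3\times U$,
$$0\to\mathcal I_{\mathcal Q_E}\to\mathcal I_{\mathcal C}\to\mathcal O_{\mathcal Q_E}(-3)\otimes h^{-1}\to 0,\qquad 0\to\mathcal O(-5)\otimes\eta^{-1}h^{-1}\to\mathcal O(-3)\otimes h^{-1}\to\mathcal O_{\mathcal Q_E}(-3)\otimes h^{-1}\to 0,$$
where $\mathcal I_{\mathcal Q_E}\cong\mathcal O(-2)\otimes\eta^{-1}$ is cut out by the genuinely global section of $\mathcal O(2)\otimes\eta$, and $\mathcal O_{\mathcal Q_E}(-3)\otimes h^{-1}\cong\mathcal I_{\mathcal C}/\mathcal I_{\mathcal Q_E}$ is the ideal of $\mathcal C$ in $\mathcal Q_E$. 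Twisting both by $\mathcal O(m)$, pushing forward to $U$ (the relevant $R^1$'s vanish for $m\geq 2$, as you note), and taking determinants of the two resulting exact sequences of vector bundles gives $\det(\varphi_m^*S)=\eta^{-(n_{m-2}-n_{m-5})}\otimes h^{-(n_{m-3}-n_{m-5})}$, exactly the class your split resolution would predict; the rest of your argument (the Pl\"ucker convention, extension of line bundles across the codimension-two complement, and the simplification of the slope) then goes through unchanged.
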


\begin{proof}
This follows directly from the construction of $\varphi_m$ and is left to the reader.
\end{proof}

\subsubsection{The Chow variety}
The Hilbert-Chow morphism $\psi_\infty:\operatorname{Hilb}_{4,1}\to \operatorname{Chow}_{4,1}$
 induces a birational map
 $
\varphi_\infty: \mathbb PE \dashrightarrow \operatorname{Chow}_{4,1}
$.  We will denote by $\Lambda_\infty$ the canonical polarization on the Chow variety.   The following was established in the proof of  \cite[Thm.~2.11]{g4ball}.

\begin{proposition}[\cite{g4ball}]\label{maptochow}
The birational map
$$
\varphi_\infty:\mathbb PE \dashrightarrow \operatorname{Chow}_{4,1}
$$
restricts to a morphism on the locus of points $([q],[f])$ such that $q$ and $f$ do not have a common linear factor.  The pull-back of the canonical polarization $\Lambda_\infty$ on $\operatorname{Chow}_{4,1}$ is proportional to
$3\eta+2h$.\qed
\end{proposition}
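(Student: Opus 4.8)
The plan is to treat the two assertions separately. For the morphism statement, recall from Remark \ref{remdefu} that over $U$ the tautological family $\mathcal C\to \mathbb{P}E$ of $(2,3)$-subschemes is flat, its fibers being the complete intersection curves cut out by a quadric and a cubic with no common component. A flat family of one-dimensional subschemes of $\mathbb{P}^3$ with fixed Hilbert polynomial determines a morphism to $\operatorname{Hilb}_{4,1}$, and post-composing with the Hilbert--Chow morphism $\psi_\infty$ yields a morphism $U\to \operatorname{Chow}_{4,1}$ agreeing with $\varphi_\infty$ on $U$. Because $\mathbb{P}E$ is smooth with $\operatorname{codim}_{\mathbb{P}E}(\mathbb{P}E\setminus U)\ge 2$, the class $\varphi_\infty^*\Lambda_\infty$ is then unambiguously defined in $\operatorname{Pic}(\mathbb{P}E)$, exactly as explained before Proposition \ref{comparepehilb}.

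For the polarization I would invoke the standard description of the natural (Cayley--Chow) polarization: for a flat family $\pi\colon \mathcal C\to B$ of $1$-cycles of fixed degree in $\mathbb{P}^3$ sitting inside $\mathbb{P}^3\times B$, the pullback of $\Lambda_\infty$ is proportional to $\pi_*(\xi^2)$, where $\xi=c_1\big(\pi_1^*\mathcal O_{\mathbb{P}^3}(1)\big)\big|_{\mathcal C}$ and $\pi_*$ denotes pushforward along the relative-dimension-one projection. Applying this to the universal family over $U$ and using the projection formula reduces the problem to computing
$$\pi_{2*}\big(H^2\cdot[\mathcal C]\big)\in \operatorname{Pic}(\mathbb{P}E),$$
where $H=c_1\big(\pi_1^*\mathcal O_{\mathbb{P}^3}(1)\big)$ on $\mathbb{P}^3\times \mathbb{P}E$ and $\pi_2\colon \mathbb{P}^3\times \mathbb{P}E\to \mathbb{P}E$ is the second projection.

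The crux is the identification of the cycle class $[\mathcal C]$ in $A^*(\mathbb{P}^3\times \mathbb{P}E)$. The universal quadric $\mathcal Q\subset \mathbb{P}^3\times\mathbb{P}E$ (with projection $\rho\colon\mathcal Q\to\mathbb{P}E$) is the zero scheme of the tautological bilinear section of \eqref{eqnuniquad}, hence has class $2H+\eta$. For the cubic, although $f$ is only well defined modulo $\langle x_0q,\dots,x_3q\rangle$, these ambiguities all vanish on $\mathcal Q$; the adjoint of the tautological inclusion $h^{-1}\hookrightarrow \pi^*\mathscr E$, under the identification $\pi^*\mathscr E\cong \rho_*\big(\pi_1^*\mathcal O_{\mathbb{P}^3}(3)|_{\mathcal Q}\big)$, exhibits $\mathcal C$ as the zero locus on $\mathcal Q$ of a section of the restriction to $\mathcal Q$ of $\mathcal O(3H+h)$. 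Thus $[\mathcal C]=(2H+\eta)(3H+h)$, and expanding, using $H^4=0$, $\pi_{2*}(H^3)=1$, and $\pi_{2*}(H^k)=0$ for $k<3$, gives
$$\pi_{2*}\big(H^2(2H+\eta)(3H+h)\big)=\pi_{2*}\big(6H^4+2H^3h+3H^3\eta+H^2\eta h\big)=3\eta+2h.$$

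The step I expect to be most delicate is the identification $[\mathcal C]=(2H+\eta)(3H+h)$ — in particular pinning down the correct relative twist $3H+h$ for the universal cubic through the tautological section, given the $\langle x_iq\rangle$ ambiguity — and, on the Chow side, verifying that $\pi_*(\xi^2)$ really computes $\Lambda_\infty$ up to the proportionality allowed in the statement. Once the cycle class is correct, the pushforward itself is entirely routine.
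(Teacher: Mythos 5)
Your proposal is correct, and it supplies an honest argument where the paper gives essentially none: Proposition \ref{maptochow} is quoted from the proof of \cite[Thm.~2.11]{g4ball} without proof, the only in-paper corroboration being Remark \ref{remasymptotics} (note that the citation-free route implicit in the paper would be to take the Knudsen--Mumford limit of Proposition \ref{comparepehilb}, since
$\varphi_m^*\Lambda_m=\tfrac{3m^2-9m+8}{2}\,\eta+(m-2)^2\,h$,
whose leading part in $m^2$ is proportional to $3\eta+2h$). Your route is genuinely different: you identify the universal curve over $U$ as the zero scheme, inside the universal quadric $\mathcal Q$ of class $2H+\eta$, of the tautological section of $\mathscr O_{\mathcal Q}(3H+h)$ obtained by adjunction from $\mathscr O_{\mathbb P E}(-1)\hookrightarrow \pi^*\mathscr E\cong\rho_*\bigl(\pi_1^*\mathscr O_{\mathbb P^3}(3)|_{\mathcal Q}\bigr)$, and then push forward. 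Both identifications check out against the paper's conventions ($\mathbb PE=\underline{\operatorname{Proj}}_{\mathbb PV_2}(\mathscr E^\vee)$ parameterizes lines in $\mathscr E$, so the tautological sub-line-bundle is indeed $\mathscr O_{\mathbb PE}(-1)$; one can also sanity-check the twists against $K_{\mathbb PE}=-14\eta-16h$ from Remark \ref{reminvts}), the base-change needed for $\pi^*\mathscr E\cong\rho_*\mathscr O_{\mathcal Q}(3H)$ holds because $H^1(Q,\mathscr O_Q(3))=0$ for every quadric surface $Q$, and your arithmetic
$\pi_{2*}\bigl(H^2(2H+\eta)(3H+h)\bigr)=3\eta+2h$
is right. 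The one external ingredient you lean on --- that the Chow polarization pulls back, up to a positive multiple, to $\pi_*(\xi^2)$ for a family of $1$-cycles --- is precisely the content of the Knudsen--Mumford theorem already cited in Remark \ref{remasymptotics} (the Chow bundle is the leading coefficient of $\det R\pi_*\mathscr O_{\mathcal C}(m)$, which Grothendieck--Riemann--Roch identifies with $\tfrac12\pi_*(\xi^2)$ in degree $m^2$), and since the statement only asserts proportionality, that is all you need. What your computation buys is independence from \cite{g4ball}: a self-contained, purely intersection-theoretic proof on $\mathbb PE$; what the paper's citation (or the limit of Proposition \ref{comparepehilb}) buys is brevity and automatic consistency with the Hilbert-scheme linearizations used elsewhere.
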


\begin{remark}\label{remasymptotics}
There is a constant $c\in \mathbb Q_+$ such that $\lim_{m\to \infty}\frac{1}{3m^2}\psi_m^*\Lambda_m=c\psi_\infty^*\Lambda_\infty$ (cf. \cite[Thm.~4]{knudsenmumford}).  This is reflected in the slopes in Propositions \ref{comparepehilb} and \ref{maptochow}.
\end{remark}

\subsection{Cones of divisors  on $\mathbb PE$}  We now consider the nef cone and pseudoeffective cone of $\mathbb PE$.  Benoist \cite{benoist} has determined the nef cones of more general spaces of complete intersections.  We state a special case of  his result here, together with a basic observation on the pseudoeffective cone.

\begin{proposition}[{\cite[Thm 2.7]{benoist}}]\label{propnef}
The nef cone of $\mathbb PE$ has extremal rays of slope $0$ and $\frac{1}{2}$.  The pseudoeffective cone of $\mathbb PE$ has an extremal ray of slope $0$ and contains the ray of slope $\frac{34}{33}$.
 \end{proposition}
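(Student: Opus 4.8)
The plan is to handle the two cones by quite different means: the nef cone is a direct specialization of \cite[Thm 2.7]{benoist}, so there the proposal is essentially to match conventions and invoke that result, whereas the pseudoeffective statement is the genuinely new ``basic observation'' and admits a short self-contained argument. Since $\rho(\mathbb PE)=2$, both the Mori cone and the nef cone are two-dimensional, so in each case it suffices to locate the two boundary rays.

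For the nef cone I would first dispatch the lower ray: a line $F$ contained in a fiber of $\pi\colon\mathbb PE\to\mathbb P V_2$ satisfies $\eta\cdot F=0$ and $h\cdot F=1$, so $\eta$ (slope $0$), being the pullback of an ample class, is nef and lies on the boundary of the nef cone. For the upper ray I would produce curves with negative $h$-degree. Restricting \eqref{eqnefp} to a line $\bar L\subset\mathbb P V_2$ gives $0\to V_1\otimes\mathscr O_{\mathbb P^1}(-1)\to V_3\otimes\mathscr O_{\mathbb P^1}\to\mathscr E|_{\bar L}\to 0$, exhibiting $\mathscr E|_{\bar L}$ as a globally generated bundle of degree $4$; a section of $\mathbb P(\mathscr E)|_{\bar L}\to\bar L$ attached to a sub-line-bundle $\mathscr O_{\mathbb P^1}(k)\hookrightarrow\mathscr E|_{\bar L}$ produces a curve $C$ with $\eta\cdot C=1$ and $h\cdot C=-k$, whence the constraint $b/a\le 1/k$ on nef classes $a\eta+bh$. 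The content of Benoist's computation is that the maximal destabilizing degree $k$, taken over all pencils of quadrics $\bar L$, equals $2$ and is attained, which pins the upper ray at slope $\tfrac12$. I expect this to be the main obstacle, and the reason to lean on \cite{benoist}: one must both exhibit a special pencil achieving $k=2$ and verify that $2\eta+h$ is nonnegative on \emph{every} curve, not merely on these test curves.

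For the pseudoeffective cone I would argue directly. The class $\eta=\pi^*\mathscr O_{\mathbb P V_2}(1)$ is effective, being the preimage of a hyperplane, so slope $0$ is represented. To see it is an extremal ray, I would show every pseudoeffective class has slope $\ge 0$: a general fiber $\mathbb P^{15}$ of $\pi$ is not contained in a given effective divisor $D=a\eta+bh$ (the fibers cover $\mathbb PE$ and $D$ is proper), so $D$ restricts to the effective divisor $b\,h|_{\mathrm{fiber}}$ on it, forcing $b\ge 0$. Hence slope $0$ is the minimal slope on the effective cone and, being attained by $\eta$, is an extremal ray of this two-dimensional cone.

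Finally, for the ray of slope $\tfrac{34}{33}$ I would invoke the boundary divisor. By the relation $\delta=33\eta+34h$ from \S\ref{ModuliSpace}, the class $33\eta+34h$ is the pullback of the boundary class of $\overline{M}_4$ along $\mathbb PE\dashrightarrow\overline{M}_4$, which is regular on $U$ with $\operatorname{Pic}(U)\cong\operatorname{Pic}(\mathbb PE)$. As the map is dominant and the general point of $U$ parameterizes a smooth curve lying off $\Delta$, the closure of the preimage of $\Delta$ is an effective divisor of class $33\eta+34h$ — concretely, the discriminant of singular $(2,3)$-complete intersections — so slope $\tfrac{34}{33}$ lies in the pseudoeffective cone. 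The only point requiring care here is the bookkeeping that the pullback of the effective boundary class is itself effective rather than merely pseudoeffective, which holds precisely because the map is a morphism on $U$ and its general image is disjoint from $\Delta$.
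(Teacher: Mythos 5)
Your proposal is correct and takes essentially the same route as the paper's proof: the nef cone is delegated to Benoist's theorem, the slope-$0$ ray bounds the pseudoeffective cone because $\eta$ is effective but cannot lie in the interior, and the discriminant class $\delta = 33\eta + 34h$ supplies the ray of slope $\frac{34}{33}$. The only cosmetic difference is in certifying extremality of the slope-$0$ ray: the paper notes that $\eta$ is not big, whereas you restrict effective divisors to fibers of $\pi$ to force a nonnegative $h$-coefficient --- two one-line versions of the same observation.
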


\begin{proof}
The computation of the nef cone is in \cite[Thm 2.7]{benoist}.   For the pseudoeffective cone, on the one hand, $\eta$ is effective (in fact semi-ample), but not big, so it generates one boundary of the pseudoeffective cone.   The discriminant divisor $\delta$ is effective, establishing the other claim.
\end{proof}

\subsection{The Rojas--Vainsencher resolution} \label{secrvres}
 Rojas--Vainsencher \cite{rv} have constructed an explicit resolution  $W$ of the rational map $\bP E \dashrightarrow \Hilb_{4,1}$, giving a diagram:
$$\xymatrix{
& W \ar[ld]_{\pi_1} \ar[rd]^{\pi_2} & \\
\bP E \ar@{-->}[rr] & & \Hilb_{4,1} .}$$
It is shown in  \cite[Thm.~3.1]{rv} that $W$ can be obtained from $\bP E$ via a sequence of seven blow-ups along $\SL(4)$-invariant smooth subvarieties, and the resulting space $W$ is isomorphic to $\mathbb PE$ along $U\subseteq \mathbb PE$, the locus of complete intersections.  In particular, $\SL(4)$ acts on $W$ (compatibly with the action on $\mathbb P E$), and $W$ is non-singular.

%%%%%%%%%%%%%%%%%%%%%%%%%%%%%%%%%%%%%%
% s2
\section{Singularities of $(2,3)$-complete intersections}
In this section we discuss the possible isolated singularities of $(2,3)$-complete intersections in $\bP^3$.  Recall that given such a complete intersection, the quadric is uniquely determined by the curve, while the cubic is only determined modulo the quadric.  In the GIT analysis, the only relevant cases are when the quadric and cubic are not simultaneously singular, by which we mean that they have no common singular points.  In this case, we can choose either the quadric or cubic to obtain local coordinates and view the singularities of $C$ as planar singularities.

\subsection{Double Points}
The only planar singularities of multiplicity two are the $A_k$ singularities. We will see later in our GIT analysis that when $k$ is odd, it is important to distinguish between two types of $A_k$ singularities, those that separate the curve and those that do not.

\begin{proposition}
There exists a reduced $(2,3)$-complete intersection possessing a non-separating singularity of type $A_k$ if and only if $k \leq 8$.  Moreover, if $C$ is a $(2,3)$-complete intersection with a separating $A_k$ singularity at a smooth point of the quadric on which it lies, then one of the following holds:
\begin{enumerate}
\item  $k=9$, and $C$ is the union of two twisted cubics.
\item  $k=7$, and $C$ is the union of a quartic and a conic.
\item  $k=5$, and $C$ is the union of a quintic and a line.
\end{enumerate}
\end{proposition}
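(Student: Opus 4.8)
The plan is to treat the two assertions separately, since they rely on different mechanisms: the sharp upper bound $k\le 8$ in the non-separating case, and a rigidity statement forcing reducibility in the separating case. Throughout I would use the basic fact that a $(2,3)$-complete intersection has arithmetic genus $p_a=4$ (it is the canonical genus, computed by adjunction), and that on a smooth quadric $\bP^1\times\bP^1$ the curves of bidegree $(3,3)$ are \emph{exactly} the $(2,3)$-complete intersections lying on that quadric, so that constructing curves on $Q$ is the same as constructing complete intersections.

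For the non-separating bound, let $p$ be an $A_k$ point and let $\nu\colon \hat C\to C$ be the partial normalization separating the branches at $p$ and an isomorphism elsewhere. Then $p_a(\hat C)=p_a(C)-\delta_p$, where $\delta_p=\lfloor (k+1)/2\rfloor$ is the delta-invariant of $A_k$. By definition the singularity is non-separating precisely when $\hat C$ stays connected, and a connected reduced projective curve satisfies $p_a=h^1(\mathcal O)\ge 0$; hence $\delta_p\le p_a(C)=4$, giving $k\le 8$. For the converse I would exhibit, for each $k\le 8$, an irreducible reduced curve of bidegree $(3,3)$ on $\bP^1\times\bP^1$ carrying a single $A_k$ singularity, whose normalization then has genus $4-\lfloor(k+1)/2\rfloor\ge 0$; existence can be settled either by explicit equations or by a dimension count on $|\mathcal O(3,3)|$ (which has dimension $15$), verifying that the stratum imposing an $A_k$ singularity is nonempty and that a generic such member is reduced with no worse singularities.

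For the separating statement, the first point is that ``separating'' forces reducibility: if $\nu$ disconnects $C$ at $p$, then $C=C_1\cup C_2$ with $C_1\cap C_2=\{p\}$ set-theoretically, and the two branches of the $A_k$ lie one on each $C_i$. Writing $k=2m-1$, an $A_{2m-1}$ singularity consists of two \emph{smooth} branches meeting with contact order $m$, so $C_1,C_2$ are smooth at $p$ with $i_p(C_1,C_2)=m$; as they meet nowhere else, the total intersection number of $C_1$ and $C_2$ on the quadric $Q$ equals $m$. When $Q$ is smooth I write $C_i$ in bidegree $(a_i,b_i)$ with $a_1+a_2=b_1+b_2=3$, so $m=C_1\cdot C_2=a_1b_2+a_2b_1$; requiring each $C_i$ to carry a single smooth branch (hence to be irreducible, excluding the reducible classes $(a,0)$ and $(0,b)$ with nonzero coordinate $\ge 2$), a short enumeration of the partitions of $(3,3)$ leaves only $m=3$ from $(1,0)+(2,3)$ (line and quintic, $k=5$), $m=4$ from $(1,1)+(2,2)$ (conic and quartic, $k=7$), and $m=5$ from $(1,2)+(2,1)$ (two twisted cubics, $k=9$). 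In particular the minimal possible intersection number is $3$, which explains why no separating $A_1$ or $A_3$ occurs at a smooth point of $Q$. When $Q$ is a quadric cone and $p$ is a smooth point, I would instead use $\operatorname{Cl}(Q)=\bZ\ell$ with $H=2\ell$ and $\ell^2=\tfrac12$, so $C\in|3H|=|6\ell|$, $C_i\in|d_i\ell|$ with $d_1+d_2=6$ and $C_1\cdot C_2=\tfrac12 d_1d_2$; since the components meet only at $p$ they cannot both pass through the vertex, forcing $d_1,d_2$ even, hence $(d_1,d_2)=(2,4)$ and again a conic and a quartic with $k=7$.

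I expect the main obstacle to be the careful bookkeeping in the separating case rather than any single hard computation: one must confirm that ``separating'' genuinely forces $C_1\cap C_2=\{p\}$ (so that local and global intersection numbers coincide), that the two branches are smooth so the singularity is exactly $A_k$ and nothing worse, and that the case analysis over all degenerations of the quadric (smooth, cone, and a priori the reducible quadrics, which I would rule out as not supporting the required configuration at a smooth point) is genuinely exhaustive. The existence half of the non-separating statement is conceptually straightforward but is the most calculation-heavy step, since it requires honestly producing a reduced $(2,3)$-complete intersection realizing each $A_k$ with $k\le 8$.
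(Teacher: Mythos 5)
Your proposal is correct and follows essentially the same route as the paper: a genus/delta-invariant bound for the non-separating statement (the paper phrases this as the local contribution $\lfloor k/2 \rfloor$ to the arithmetic genus $4$, while your connectedness-of-the-partial-normalization version, using $\delta = \lfloor (k+1)/2 \rfloor \leq 4$, is the precise form needed to exclude a non-separating $A_9$), plus existence of examples for $k \leq 8$, and for the separating statement the decomposition $C = C_1 \cup C_2$ with the two components smooth at $p$ and meeting only there, followed by a case-by-case analysis, which you make concrete via intersection theory on the smooth quadric and on the cone. The only caveats are minor and match the paper's own level of detail: the paper outsources the existence of non-separating $A_k$, $k \leq 8$, to a citation of Fedorchuk where you leave it as an unexecuted construction, and your parenthetical claim that each $C_i$ must be irreducible is not quite right (in the line-plus-quintic case the quintic may be reducible), though this does not affect the class-based computation of the contact order $m$ or the stated conclusions.
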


\begin{proof}
The local contribution of an $A_k$ singularity to the genus is $\lfloor \frac{k}{2} \rfloor$.  Since the arithmetic genus of a $(2,3)$-complete intersection is 4, it follows that it cannot admit an $A_k$ singularity if $k \geq 10$.  Conversely, it is easy to see that there exist $(2,3)$-complete intersections with non-separating singularities of type $A_k$ for each $k\le 8$ (e.g. see \cite[\S2.3.7]{maksymg4}).

If $C$ possesses a separating singularity of type $A_{2n-1}$, then $C = C_1 \cup C_2$, where $C_1$ and $C_2$ are connected curves meeting in a single point with multiplicity $n$.  A case by case analysis of the possibilities gives the second statement of the proposition.  It is straightforward to check that there is no $(2,3)$-complete intersection with a separating node or tacnode.
\end{proof}

\subsection{Triple Points}
\label{sect:triplepoints}

Let $C$ be a $(2,3)$-complete intersection with a singularity of multiplicity 3, which  does not contain a line component meeting the residual curve only at the singularity.  Notice that projection from the singularity maps $C$ onto a cubic in $\PP^2$.  It follows that $C$ is contained in the cone over this cubic.  We choose specific coordinates so that the singular point is $p = (1,0,0,0)$ and the tangent space to the quadric at $p$ is given by $x_3 = 0$.  Now, consider the 1-PS with weights $(1,0,0,-1)$.  The flat limit of $C$ under this one-parameter subgroup is cut out by the equations:
$$ x_0 x_3 + q'( x_1 , x_2 ) = f'( x_1 , x_2 ) = 0 $$
where $q'$ and $f'$ are forms in the variables $x_1 , x_2$.  We see that this limit is the union of three (not necessarily distinct) conics meeting at the points $p$ and $(0,0,0,1)$.

Following \cite{maksymg4} we will refer to these unions of conics as {\it tangent cones}.  In our GIT analysis we will see that, for a given linearization, the semistable tangent cones are precisely the polystable (i.e.~semi-stable with closed orbit) curves with triple point singularities.  Note that the conics are distinct if and only if the original triple point is of type $D_4$.

\subsection{Curves on Singular Quadrics}
As we vary the GIT parameters, we will see that certain subloci of curves on singular quadrics are progressively destabilized.  In this section we briefly describe each of these loci.  The first locus to be destabilized is the set of curves lying on low-rank quadrics.

\begin{proposition}
The only reduced $(2,3)$-complete intersections with more than one component of positive genus consist of two genus one curves meeting in 3 points.  Such a curve necessarily lies on a quadric of rank 2, and moreover the general complete intersection of a cubic and a rank 2 quadric is such a curve.
\end{proposition}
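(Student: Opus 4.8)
The plan is to reduce everything to the elementary projective geometry of plane cubics: first a degree--genus count pins down the components, then a short computation of the intersection length together with an identification of the defining quadric gives rank $2$, and finally a Bertini argument settles the converse.

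First I would establish the classification. Write $C=\bigcup_i C_i$ for the decomposition into reduced irreducible components; since $C=Q\cap F$ has degree $6$, the degrees of the $C_i$ sum to $6$. An irreducible curve of degree $\le 2$ in $\bP^3$ is a line or a (possibly degenerate) conic, hence of geometric genus $0$, so any component of positive geometric genus has degree $\ge 3$. Thus if $C$ has two components of positive genus, their degrees are each exactly $3$, and these exhaust the total degree $6$, leaving no further components. A reduced irreducible degree-$3$ curve of positive genus has genus $1$ and is a smooth plane cubic (a degree-$3$ very ample series on a genus-$1$ curve has $h^0=3$, embedding it in a plane). Hence $C=C_1\cup C_2$ with $C_i\subset H_i$ smooth plane cubics of genus $1$.

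Next I would read off the intersection length from $p_a(C)=4$. From the exact sequence $0\to \OO_C\to \OO_{C_1}\oplus \OO_{C_2}\to \OO_{C_1\cap C_2}\to 0$ and $\chi(\OO_{C_i})=0$ one gets $\chi(\OO_C)=-\operatorname{length}(C_1\cap C_2)$; since $\chi(\OO_C)=1-p_a(C)=-3$, the scheme $C_1\cap C_2$ has length $3$, which is the asserted ``meeting in $3$ points.'' In particular $H_1\ne H_2$: were the two cubics coplanar, B\'ezout in $\bP^2$ would force their intersection to have length $9\ne 3$. Therefore $Q_0:=H_1\cup H_2$ is a rank-$2$ quadric containing $C$. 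To see this is the defining quadric, note that any quadric $Q'\supseteq C$ restricts on $H_1$ either to a conic containing the cubic $C_1$ (impossible by degree) or to all of $H_1$; hence $\ell_1\mid q'$ and likewise $\ell_2\mid q'$, where $\ell_i$ is the linear form of $H_i$, and since $\ell_1\ne\ell_2$ and $\deg q'=2$ we get $q'=c\,\ell_1\ell_2$, i.e.\ $Q'=Q_0$. This proves both uniqueness of the quadric and that it has rank $2$ (consistent with the uniqueness of the quadric recalled at the start of the section). For the converse I would take a general rank-$2$ quadric $Q_0=H_1\cup H_2$ with $\ell=H_1\cap H_2$ and a general cubic $F$, and set $C=Q_0\cap F=(F\cap H_1)\cup(F\cap H_2)=:C_1\cup C_2$; by Bertini each $C_i$ is a smooth plane cubic, and as $C_1\cap C_2\subseteq\ell$ is cut out on $\ell\cong\bP^1$ by $F|_\ell$, a general $F$ meets $\ell$ in $3$ distinct points, so $C$ is two genus-one curves meeting in $3$ points.

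I expect the only real subtlety to be the bookkeeping of the classification step --- invoking the correct low-degree curve facts (degree $\le 2\Rightarrow$ rational, degree $3$ with positive genus $\Rightarrow$ smooth plane cubic) so that the degree-$6$ budget forces \emph{exactly} two plane-cubic components and nothing else --- together with making the passage from $H_1\ne H_2$ to ``the defining quadric has rank $2$'' rest on the uniqueness of the quadric through $C$. The remaining ingredients (the length computation and the Bertini/B\'ezout argument for the converse) are routine.
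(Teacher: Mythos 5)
Your proof is correct and follows essentially the same route as the paper's: the degree budget forces two components of degree $3$ each, positive genus forces them to be plane cubics, and the uniqueness of the quadric through $C$ forces that quadric to be the union of the two (distinct) planes, hence of rank $2$. You additionally spell out details the paper leaves to the reader --- the length-$3$ intersection computation via Euler characteristics and the Bertini argument for the converse --- but these elaborations do not change the approach.
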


\begin{proof}
Suppose that $C = C_1 \cup C_2$ is the union of two positive genus curves.  Neither curve may have degree 2 or less, and hence both have degree 3.  Any degree 3 curve that spans $\bP^3$ is rational, and hence the two curves are both plane cubics.  Since $C$ is contained in a unique quadric, it follows that this quadric must be the union of two planes, and hence $C$ is as described above.
\end{proof}

Following \cite{afs}, we refer to such curves as {\it elliptic triboroughs}.  The locus of elliptic triboroughs is expected to be flipped in the Hassett--Keel program at the critical value $\alpha = \frac{5}{9}$.  This is exactly what we will prove in the following sections.

We now consider curves on a quadric of rank 3.  More specifically, we will see that a curve lies on a quadric cone if and only if its normalization admits a Gieseker--Petri special linear series. The proposition below follows by a standard argument.  The result is not needed in the ensuing proofs, but is useful in giving a geometric interpretation to the stability computations in later sections.

\begin{proposition} Let $C\subset \bP^3$ be a complete intersection of a cubic and a quadric of rank at least 3, non-singular everywhere except possibly one point.  Then the following hold:
\begin{enumerate}
\item  If $C$ is smooth, it has a vanishing theta-null if and only if it lies on a quadric cone.
\item  The normalization of $C$ is a hyperelliptic genus 3 curve if and only if $C$ lies on a quadric cone and has a node or cusp at the vertex.
\item  $C$ is a tacnodal curve such that the two preimage points of the tacnode via the normalization are conjugate under the hyperelliptic involution if and only if $C$ lies on a quadric cone and has a tacnode at the vertex.
\end{enumerate}\qed
\end{proposition}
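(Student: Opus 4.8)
The plan is to handle all three parts through a single mechanism: the rulings of a quadric cone, which via projection from the vertex realize a degree-two map onto the base conic, together with the dictionary between this pencil and linear series on the normalization. Part (1) is the classical case. On a smooth non-hyperelliptic genus-four curve the two rulings of the unique quadric $Q$ cut out trigonal pencils $A,B$ with $A+B\sim K_C$. A vanishing theta-null is an even theta characteristic $\theta$ with $h^0(\theta)>0$; since its parity is even, $h^0(\theta)\ge 2$, and as $\deg\theta=g-1=3$ with $C$ non-hyperelliptic, $|\theta|$ is a base-point-free $g^1_3$ satisfying $2\theta\sim K_C$, i.e.\ a self-residual trigonal pencil. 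Hence $C$ has a vanishing theta-null if and only if the two trigonal pencils coincide, which happens exactly when the two rulings of $Q$ coincide, i.e.\ when $Q$ has rank three. I would also record that a smooth $C$ on a cone necessarily avoids the vertex.

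For the \emph{if} directions of (2) and (3) I would compute directly. Place the vertex at $v=(0,0,0,1)$, write the cone as $x_0x_1=x_2^2$, and parametrize its minimal resolution $\mathbb{F}_2\to Q$ by $(x_0,\lambda)\mapsto(x_0,x_0\lambda^2,x_0\lambda)$, so that the rulings are the fibers $\lambda=\text{const}$ and the exceptional curve is $\{x_0=0\}$. Pulling back the defining cubic and removing the factor $x_0$ cutting out the exceptional curve, the proper transform is defined by an equation of the form $x_0^2g_3(\lambda)+x_0g_2(\lambda)+\ell(\lambda)=0$ that is \emph{quadratic in} $x_0$; thus projection to $\bP^1_\lambda$ has degree two and $\tilde C$ is hyperelliptic, with the ruling pencil inducing the $g^1_2$. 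The genus is read off from the delta-invariant of the singularity at $v$: one for a node or cusp, giving $g=3$; two for a tacnode, giving $g=2$. For the conjugacy in (3), note that the tangent line at $v$ to any smooth branch lying on the cone is a line through $v$ in the tangent cone, hence a ruling; the two branches of a tacnode share a tangent, so both preimages $p_1,p_2$ lie in the ruling fiber over a single $\lambda_0$, that is in one fiber of the $g^1_2$, whence $p_1+p_2\sim H$ and $p_1,p_2$ are conjugate under the hyperelliptic involution.

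For the \emph{only if} directions I would reconstruct the cone from linear series on $\tilde C$, writing $H$ for the hyperelliptic pencil and $L=\nu^*\mathscr{O}_C(1)$. The sections of $V=H^0(C,\omega_C)$ vanishing at the singular point $v$ form a codimension-one subspace $V'$; pulling back and imposing the conductor ($p_1+p_2$ for a node, $2p$ for a cusp, $2p_1+2p_2$ for a tacnode) shows that $V'$ injects into $H^0(\omega_{\tilde C})$ in the genus-three cases and into $H^0(\omega_{\tilde C}(p_1+p_2))$ in the tacnodal case. In the latter, the conjugacy hypothesis $p_1+p_2\sim H$ upgrades the target to $H^0(2\omega_{\tilde C})$. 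In every case $V'$ is three-dimensional and maps isomorphically onto a complete $|2H|$ whose morphism factors as the degree-two hyperelliptic map followed by the Veronese $\bP^1\hookrightarrow\bP^2$, so its image is a conic. This yields a quadratic relation $Q_0(s_0,s_1,s_2)=0$ among a basis $s_0,s_1,s_2$ of $V'$; choosing $s_3\in V$ nonvanishing at $v$ places $v$ at $(0,0,0,1)$ and exhibits $C$ on the rank-three cone $\{Q_0=0\}$ with vertex $v$. Since a $(2,3)$-complete intersection lies on a \emph{unique} quadric, this cone is the quadric of the hypothesis, proving that $Q$ has rank three and that $C$ is singular at its vertex.

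The main obstacle is the singular-curve bookkeeping in the \emph{only if} directions: pinning down the conductor so that ``vanishing at $v$ on $C$'' translates into the correct vanishing order on $\tilde C$ (order one at each branch of a node, order two at a cusp, and in the tacnodal case recognizing that the conjugacy hypothesis is precisely what collapses $H^0(\omega_{\tilde C}(p_1+p_2))$ onto $H^0(2\omega_{\tilde C})$), and then verifying that the three resulting sections span a complete, base-point-free $|2H|$ rather than a proper subsystem. A parallel delicacy appears in the \emph{if} direction for the tacnode, where one must check via the local resolution that the two tangent branches really produce two distinct preimages over the common $\lambda_0$ rather than a unibranch image. Once these local computations are settled, the conic relation and the uniqueness of the quadric close every case uniformly.
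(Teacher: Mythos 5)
The paper does not actually supply a proof of this proposition: it is stated with a \qed and the remark that it ``follows by a standard argument,'' so there is no argument of the authors' to compare yours against. Your write-up is a correct and complete implementation of what that standard argument is: the rulings of the cone cut out the relevant pencil (the self-residual $g^1_3$ in part (1), the $g^1_2$ on the normalization in parts (2) and (3)), and conversely the pencil reconstructs the cone via projection from the singular point together with uniqueness of the quadric through a $(2,3)$-complete intersection. Two small glosses are worth tightening if this were written out in full. First, in part (1) you implicitly use the standard fact that \emph{every} $g^1_3$ on a genus-$4$ canonical curve is cut by a ruling of the unique quadric (equivalently, a self-residual $g^1_3$ forces the quadric to have rank $3$); this follows from the base-point-free pencil trick applied to the multiplication map $H^0(A')\otimes H^0(K-A')\to H^0(K)$. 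Second, in the ``if'' directions the inference ``the proper transform is quadratic in $x_0$, thus the projection to $\bP^1_\lambda$ has degree two'' needs a word: the leading coefficient $g_3(\lambda)$ could vanish identically (part of the fiber escaping to the plane at infinity of your chart), so one should either compute the intersection number of the proper transform with a ruling in $\mathbb{F}_2$ (it is $2$), or note that a degree-one projection would make the normalization rational, contradicting the genus count from the delta invariant. Neither point affects the structure of the argument.
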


%%%%%%%%%%%%%%%%%%%%%%%%%%%%%%%%%%%%%%
% s3
\section{The two boundary cases}\label{sectboundary}
In this section we describe two previously studied birational models for $\overline{M}_4$ that are obtained via GIT for canonically embedded genus  $4$  curves (see \cite{maksymg4} and \cite{g4ball}).  In the later sections we will see that these two models coincide with the ``boundary cases'' in our GIT problem.  In other words, each of the models is isomorphic to a quotient of $\bP E \gquot \SL (4)$ for a certain choice of linearization, and all of the other linearizations we consider are effective combinations of these two.

\subsection{Chow Stability, following \cite{g4ball}}\label{cubic3folds}
Let $\Chow_{4,1}$ denote the irreducible component of the Chow variety containing genus 4 canonical curves.  In \cite{g4ball}, the authors study the GIT quotient $\Chow_{4,1} \gquot_{\Lambda_\infty} \SL (4)$ and obtain the following:

\begin{theorem}[{\cite[Thm.~3.1]{g4ball}}] \label{thmcubic}
The stability conditions for the quotient $\Chow_{4,1}\gquot_{\Lambda_\infty} \SL(4)$ are described as follows:
\begin{itemize}
\item[(0)] Every semi-stable point $c\in \Chow_{4,1}$ is the cycle associated to a $(2,3)$-complete intersection in $\mathbb P^3$. The only non-reduced $(2,3)$-complete intersections that give a semi-stable point $c\in \Chow_{4,1}$ are the genus $4$ ribbons (all with associated cycle equal to the twisted cubic with multiplicity $2$).
 \end{itemize}
Assume now  $C$ is a reduced  $(2,3)$-complete intersection in $\mathbb P^3$, with associated point $c\in \Chow_{4,1}$. Let $Q\subseteq \mathbb P^3$ be the unique quadric containing $C$. The following hold:
 \begin{itemize}
 \item[(0')] $c$ is unstable if $C$ is the intersection of a quadric and a cubic that are simultaneously singular. Thus, in items (1) and (2) below we can assume $C$ has only planar singularities.
 \item[(1)] $c$ is stable if and only if $\rank Q \ge 3$ and $C$ is a curve with  at worst $A_1,\ldots,A_4$ singularities at the smooth points of $Q$ and at worst an $A_1$ or $A_2$ singularity at the vertex of $Q$ (if $\rank Q=3$).
 \item[(2)] $c$ is strictly semi-stable if and only if
 \begin{itemize} \item[i)] $\rank Q=4$  and
\begin{itemize} \item[($\alpha$)]
$C$ contains a singularity of type $D_4$ or $A_5$, or,
\item[($\beta$)]
$C$ contains a singularity of type $A_k$, $k\ge 6$, and $C$ does not contain an irreducible component of degree $\leq 2$, or,
 \end{itemize}

  \item[ii)] $\rank Q=3$,   $C$ has at worst an $A_k$, $k\in \mathbb N$, singularity at the vertex of $Q$  and
  \begin{itemize} \item[($\alpha$)] $C$ contains a $D_4$ or an $A_5$ singularity at a smooth point of $Q$ or an $A_3$ singularity at the vertex of $Q$, or, \item[($\beta$)] $C$ contains a singularity of type $A_k$, $k\ge 6$, at a smooth point of $Q$ or a singularity of type $A_k$, $k\ge 4$, at the vertex of $Q$, and $C$ does not contain an irreducible component that is a line, or, \end{itemize}
  \item[iii)] $\rank Q=2$ and $C$ meets the singular locus of $Q$ in three distinct points.
\end{itemize}
\end{itemize}
\end{theorem}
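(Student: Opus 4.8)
The plan is to apply the Hilbert--Mumford numerical criterion directly on $\Chow_{4,1}$. After conjugating, any destabilizing one-parameter subgroup may be taken diagonal, say $\rho=(r_0,r_1,r_2,r_3)$ with $r_0\ge r_1\ge r_2\ge r_3$ and $\sum r_i=0$, and I would compute the associated Chow--Mumford weight $\mu(c,\rho)$, which is controlled, via Mumford's leading-coefficient formula, by the limit
\[
\lim_{m\to\infty}\frac{2}{m^{2}}\,w_\rho\bigl(H^0(\mathcal O_C(m))\bigr),
\]
where $w_\rho$ is the total $\rho$-weight of a weighted monomial basis of the homogeneous coordinate ring; the resolution of the ideal sheaf from \S1 makes this Hilbert function explicit. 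Since the quadric $Q$ is intrinsic to $C$, stability is a property of the pair $(Q,F)$ modulo $\SL(4)$, and the criterion reads: $c$ is semistable iff $\mu(c,\rho)\ge 0$ for all $\rho$, and stable iff $\mu(c,\rho)>0$ for all nontrivial $\rho$. Item (0$'$) then falls out of this formula: if $Q$ and $F$ are singular at a common point, placed at $[1:0:0:0]$, the one-parameter subgroup $\rho=(3,-1,-1,-1)$ concentrates too much weight near that point and forces $\mu(c,\rho)<0$.

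Rather than grind out every weight by hand, a more efficient route is to pass to an associated cubic threefold. To $C=Q\cap F$ I would attach $X=V(x_4\,Q(x_0,\dots,x_3)+F(x_0,\dots,x_3))\subset\bP^4$, which has a distinguished singular point $p=[0:0:0:0:1]$ whose tangent cone is $Q$; projection from $p$ recovers $\bP^3$ and exhibits $C$ in the discriminant. The key geometric input is that, away from $p$, the singularities of $X$ are in type-preserving bijection with those of $C$ (an $A_k$ on the curve suspends to an $A_k$ on the threefold), while the singularity of $X$ at $p$ is governed by $\rank Q$: a node when $\rank Q=4$, worsening as the rank drops. One then imports the known GIT of cubic threefolds --- stable with at worst $A_4$ singularities, strictly semistable at $A_5$ or $D_4$ --- which matches precisely the thresholds in items (1) and (2).

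Transporting this back to $\Chow_{4,1}$ requires three comparisons. First, the $\SL(5)$ acting on cubics in $\bP^4$ must be cut down to the stabilizer of the marked point $p$, a parabolic reducing essentially to $\SL(4)\times\bG_m$; I would phrase the transfer as GIT with a marked singular point and check that the adapted block-diagonal one-parameter subgroups on the two sides correspond. Second, I would verify the linearizations agree up to scaling, using $\varphi_\infty^*\Lambda_\infty\propto 3\eta+2h$ (Proposition~\ref{maptochow}). Third, for the rank $3$ and rank $2$ cases the vertex of $Q$ is itself special, so $p$ becomes a worse singularity of $X$ and interacts with the curve singularities; here I would do the fine casework directly, organizing the strictly semistable boundary by the polystable tangent cones of \S\ref{sect:triplepoints} and the elliptic triboroughs of the rank $2$ analysis, and identifying closed orbits with degenerate limit cycles. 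This is also where the separating/non-separating dichotomy and the component conditions (no line, no degree $\le 2$ component) get pinned down, since those are exactly the configurations for which a boundary one-parameter subgroup yields $\mu=0$ rather than $\mu>0$.

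The main obstacle will be the strictly semistable boundary together with the parabolic reduction. Proving instability in the excluded cases is comparatively mechanical --- one exhibits a single $\rho$ adapted to the worst singularity --- but establishing $\mu(c,\rho)\ge 0$ for \emph{every} $\rho$ in the semistable cases, and sharply distinguishing $A_5$ from $A_6$, or $A_6$ with versus without a low-degree component, requires controlling all one-parameter subgroups simultaneously and analyzing the limit cycle to decide polystability. The $\SL(5)/\SL(4)$ comparison is delicate precisely because the marked point $p$ can absorb or release weight according to $\rank Q$, so the correspondence of destabilizing subgroups is not automatic and must be checked case by case; finally, the non-reduced boundary (the genus $4$ ribbons of item (0)) lies outside the reduced-complete-intersection dictionary and needs separate treatment.
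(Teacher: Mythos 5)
This theorem is not proved in the present paper: it is imported verbatim from \cite[Thm.~3.1]{g4ball}, and the proof given there is essentially the one you outline --- associate to $C=Q\cap F$ the cubic threefold $V(x_4Q+F)\subset\bP^4$ with a marked singular point, reduce $\SL(4)$-stability of $(Q,F)$ at slope $\frac{2}{3}$ (equivalently Chow stability, via $\varphi_\infty^*\Lambda_\infty\propto 3\eta+2h$) to $\SL(5)$-stability of the threefold through the stabilizer of the marked point, and then quote Allcock's classification of (semi)stable cubic threefolds together with the suspension dictionary for $A_k$ singularities and the rank-of-$Q$ analysis at the marked point (the genus $4$ ribbon corresponding to the chordal cubic). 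Your assessment of where the real work lies --- the parabolic reduction of destabilizing one-parameter subgroups and the strictly semistable/polystable boundary casework for low-rank quadrics --- likewise matches where the effort is spent in \cite{g4ball}.
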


\begin{remark}\label{remribbons}
In the example from \cite[\S 7]{BE}, it is shown that  up to change of coordinates there is only one canonically embedded ribbon of genus 4.  Moreover, it is shown that the ideal of this ribbon (again, up to change of coordinates) is generated by the quadric $q = x_1 x_3 - x_2^2$ and the cubic
$$f = \det \left( \begin{array}{ccc}
x_3 & x_2 & x_1 \\
x_2 & x_1 & x_0 \\
x_1 & x_0 & 0  \\
\end{array} \right).$$
\end{remark}

\begin{remark}\label{minorbit1}
The closed orbits of semi-stable curves fall into 3 categories (see also \cite[Rem.~3.2, 3.3]{g4ball}):
\begin{enumerate}
\item  The curve $C_D=V(x_0x_3,x_1^3+x_2^3)$, consisting of three pairs of lines meeting in two $D_4$ singularities;
\item  The maximally degenerate curve $C_{2A_5}=V(x_0x_3-x_1x_2,x_0x_2^2+x_1^2x_3)$ with two $A_5$ singularities;
\item  The curves $C_{A,B}=V(x_2^2-x_1x_3,Ax_1^3+Bx_0x_1x_2+x_0^2x_3)$, of which there is a pencil parameterized by $4A/B^2$.  If $4A/B^2\ne 0,1$, then $C_{A,B}$  has an $A_5$ singularity at a smooth point of the singular quadric, and an $A_3$ singularity at the vertex of the cone.  If $4A/B^2=0$, then $C_{A,B}$  has an $A_5$ and $A_1$ singularity at smooth points of the singular quadric, and an $A_3$ singularity at the vertex of the cone.   If $4A/B^2=1$ the curve $C_{A,B}$ is the genus $4$ ribbon, and the associated point in $\Chow_{4,1}$ is the twisted cubic with multiplicity $2$. Note also that the orbit closures of curves corresponding to cases (2) i) ($\beta$) and (2) ii) ($\beta$) contain the orbit of the ribbon.
\end{enumerate}
Moreover, we can describe the degenerations of the strictly semi-stable points $c\in\Chow_{4,1}$.  Let $C$ be a $(2,3)$-scheme with strictly semi-stable cycle $c\in \Chow_{4,1}$.  If $C$ contains a $D_4$ singularity, or lies on a rank $2$  quadric, then $c$ degenerates to the cycle associated to $C_D$.  If $C$ lies on a quadric $Q$ of rank at least $3$, and either $C$ contains an $A_5$ singularity at a smooth point of $Q$, or an $A_3$ singularity at the vertex of $Q$ (if $\rank Q=3$), then $c$ degenerates to either the cycle associated to  $C_{2A_5}$ or to the cycle associated to some $C_{A,B}$ with $4A/B^2\ne 1$.  Otherwise, $c$ degenerates to $C_{A,B}$ with $4A/B^2=1$,  a non-reduced complete intersection supported on a rational normal curve.
\end{remark}

Additionally, it is shown in \cite{g4ball} that the quotient of the Chow variety coincides with one of the Hassett--Keel spaces, specifically:
\begin{equation}\label{end1}
\Chow_{4,1} \gquot_{\Lambda_\infty} \SL(4)\cong \overline{M}_4 \left( \frac{5}{9} \right).
\end{equation}
For the reader's convenience, we briefly describe the birational contraction $\overline{M}_4 \dashrightarrow \Chow_{4,1} \gquot \SL(4)$ in Table \ref{table1}.  In order to make sense of the table, we need to recall some standard terminology.  Specifically, a \emph{tail} of genus $i$ is a genus $i$ connected component of a curve that meets the residual curve in one point.  Similarly, a \emph{bridge} of genus $i$ is a genus $i$ connected component of a curve that meets the residual curve in two points.  By conjugate points on a hyperelliptic curve, we mean points that are conjugate under the hyperelliptic involution.  An \emph{elliptic triborough} is a genus $1$ connected component of a curve that meets the residual curve in three points.

\begin{table}[htb!]
\begin{tabular}{|c|l|}
\hline
Semi-stable Singularity & Locus Removed in $\overline M_4$ \\
\hline\hline
$A_2$ & elliptic tails \\
\hline
$A_3$ & elliptic bridges \\
\hline
$A_4$ & genus 2 tails attached at a Weierstrass point \\
\hline
non-separating $A_5$ & genus 2 bridges attached at conjugate points \\
\hline
separating $A_5$ & general genus 2 tails \\
\hline
$A_6$ & hyperelliptic genus 3 tails attached at a Weierstrass \\
&point \\
\hline
non-separating $A_7$ & curves in $\Delta_0$ with hyperelliptic normalization glued \\
&at conjugate points \\
\hline
$A_8$, $A_9$, ribbons & hyperelliptic curves \\
\hline
$D_4$ & elliptic triboroughs \\
\hline
\end{tabular}
\vspace{0.2cm}
\caption{The birational contraction  $\overline M_4\dashrightarrow\Chow_{4,1}\gquot \SL(4)$}\label{table1}
\end{table}

\begin{remark} We note in particular that the rational map $\overline M_4\dashrightarrow\Chow_{4,1}\gquot \SL(4)$ contracts the boundary divisors $\Delta_1$ and $\Delta_2$, the closure of the hyperelliptic locus, and the locus of elliptic triboroughs.
\end{remark}

\subsection{Terminal Stability (i.e. stability for $(3,3)$ curves on quadric surfaces) following  \cite{maksymg4}}\label{sect33}
Recall that every canonically embedded curve $C$ of genus 4 is contained in a quadric in $\bP^3$.  If this quadric is smooth, then it is isomorphic to $\bP^1 \times \bP^1$, and $C$ is a member of the class $\vert \mathcal{O}_{\bP^1 \times \bP^1} (3,3) \vert$.  The automorphism group of the quadric is $\SO(4)$, which is isogenous to $\SL(2) \times \SL(2)$.  The GIT quotient $ \vert \mathcal{O} (3,3) \vert \gquot \SO(4) $ was studied in detail by Fedorchuk in \cite{maksymg4}.  Because this GIT quotient appears as the last stage of the log minimal model program for $\overline{M}_4$, we refer to curves that are (semi)stable with respect to this action as {\bf terminally (semi)stable}.  We summarize the results of \cite{maksymg4} here.

\begin{theorem} [{Fedorchuk \cite[\S2.2]{maksymg4}}]\label{thmmaksym}
Let $C \in \vert \OO (3,3) \vert$.  $C$ is terminally stable if and only if its has at worst double points as singularities and  it does not contain a line component $L$ meeting the residual  curve $C'=\overline{C\setminus L}$ in exactly one point.  $C$ is terminally semi-stable if and only if it contains neither a double-line component, nor a line component $L$ meeting the residual curve $C'$ in exactly one point, which is also a singular point of $C'$ (i.e.~$L\cap C'=\{p\}$ and  $p\in \Sing(C')$).
\end{theorem}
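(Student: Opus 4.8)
The plan is to apply the Hilbert--Mumford numerical criterion to the action of $G=\SL(2)\times\SL(2)$ (isogenous to $\SO(4)$) on $|\OO(3,3)|=\bP(\Sym^3 U\otimes\Sym^3 W)$, where $U,W$ are the standard representations of the two factors. Write a curve as $F=\sum_{0\le i,j\le 3}a_{ij}\,x_0^{3-i}x_1^i y_0^{3-j}y_1^j$, with $x_0,x_1$ (resp.\ $y_0,y_1$) coordinates on the two rulings, so that the monomial $x_0^{3-i}x_1^i y_0^{3-j}y_1^j$ carries the torus weight $(3-2i,3-2j)\in\bZ^2$. Every one-parameter subgroup is conjugate to a diagonal $\lambda_{a,b}\colon t\mapsto(\mathrm{diag}(t^a,t^{-a}),\mathrm{diag}(t^b,t^{-b}))$, on which the monomial has weight $w_{a,b}(i,j)=a(3-2i)+b(3-2j)$. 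Using $\mu(F,g\lambda g^{-1})=\mu(g^{-1}F,\lambda)$, the Weyl group, and the factor-swap in $\SO(4)$ to reduce to $a\ge b\ge 0$, the criterion takes its cleanest form via the weight polytope: $F$ is semistable (resp.\ stable) if and only if for every $g\in G$ the origin lies in the convex hull (resp.\ the interior of the convex hull) of $\{(3-2i,3-2j):a_{ij}(gF)\ne 0\}$. Thus $F$ fails to be semistable exactly when some translate $gF$ has all monomials in an open half-plane $\{w_{a,b}>0\}$, and fails to be stable exactly when some translate has all monomials in a closed half-plane through the origin.

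First I would dispatch the destabilizing (``only if'') directions by exhibiting explicit subgroups. If $C$ contains a double line we may assume $x_0^2\mid F$; then every monomial has $i\le 1$, so $w_{1,0}(i,j)=3-2i\ge 1>0$ and $C$ is unstable. If $C=L\cup C'$ with $L=\{x_0=0\}$ a line meeting the residual $C'=\{G=0\}$ (of class $(2,3)$) in the single point $p=([0:1],[0:1])$, one checks that $x_0\mid F$ and that $p\in\Sing(C')$ is equivalent to the vanishing of the coefficient of $x_0^2x_1y_1^3$, i.e.\ of the monomial $(i,j)=(1,3)$; in that case $\mathrm{supp}(F)\subseteq\{2i+j\le 4\}=\{w_{2,1}>0\}$, so $C$ is unstable. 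When $p$ is a smooth point of $C'$ the monomial $(1,3)$ reappears and $\lambda_{2,1}$ no longer destabilizes, but $\lambda_{3,1}$ places $\mathrm{supp}(F)$ in $\{w_{3,1}\ge 0\}$ with equality attained, so $\mu(F,\lambda_{3,1})=0$; likewise a point of multiplicity $\ge 3$ placed at $([0:1],[0:1])$ forces $\mathrm{supp}(F)\subseteq\{i+j\le 3\}=\{w_{1,1}\ge 0\}$ and $\mu(F,\lambda_{1,1})=0$. In both of these last cases the origin lies on the boundary of the weight polytope, so $C$ is not stable, matching the asserted stable locus (at worst double points, and no line meeting its residual in exactly one point).

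The main obstacle is the converse (the ``if'' directions): semistability of every curve with no double-line component and no line meeting its residual in a single singular point, and stability of every curve with at worst double points and no line meeting its residual once. Here one must control \emph{all} subgroups and \emph{all} coordinate frames at once. I would argue by contradiction: if $F$ is destabilized then, after acting by some $g$, its support lies in an open half-plane $\{w_{a,b}>0\}$ with $a\ge b\ge 0$. As $(a,b)$ ranges over this fundamental chamber the sets $\{(i,j):w_{a,b}>0\}$ realize only finitely many maximal supports, cut out at the walls $b=0$, $a=b$ and the interior rays through $(2,1),(3,1)$; concretely these are $\{i\le 1\}$, $\{i+j\le 2\}$, $\{2i+j\le 4\}$ together with their boundary enlargements for the non-stable analysis. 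The crux is then the geometric dictionary for each region: $\{i\le 1\}$ forces $x_0^2\mid F$, a double line; $\{i+j\le 2\}$ forces multiplicity $\ge 4$ at $([0:1],[0:1])$, which (since a ruling meets a reduced $(3,3)$ curve in only $3$ points) forces that ruling to be a component and reduces to the line-component analysis; and $\{2i+j\le 4\}$ forces $x_0\mid F$ with $L=\{x_0=0\}$ meeting $C'$ in one point that is singular on $C'$.

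Carrying this out rigorously means translating each prohibited geometric feature into the precise vanishing of coefficients that pushes the support into one of these regions, and checking that for an \emph{allowed} curve no translate $g$ can achieve this --- a delicate, case-by-case verification. The subtle point, special to genus $4$, is exactly the boundary monomial $(1,3)$: its presence or absence distinguishes a line meeting $C'$ at a smooth point (strictly semistable) from one meeting $C'$ at a singular point (unstable), and in the same way separates the separating from the non-separating $A_k$ behavior recorded in Table~\ref{table1}. I expect this converse, rather than the explicit destabilizations, to be where essentially all of the work lies.
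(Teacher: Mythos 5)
The paper does not prove this statement: Theorem \ref{thmmaksym} is imported from Fedorchuk \cite{maksymg4} (the surrounding text says ``We summarize the results of \cite{maksymg4} here''), so there is no in-paper proof to compare against. Judged on its own terms --- and against Fedorchuk's original argument, which is likewise a Hilbert--Mumford analysis for $\SL(2)\times\SL(2)$ acting on $\bP(\Sym^3U\otimes\Sym^3W)$ --- your outline is correct and is essentially the standard route. Your explicit destabilizations check out, and your finiteness claim is the right key point and is accurate: for $a\ge b\ge 0$ the open half-planes $\{w_{a,b}>0\}$ cut out exactly $\{i\le 1\}$ (when $b=0$ or $a>3b$), $\{2i+j\le 4\}$ (when $b<a<3b$), and $\{i+j\le 2\}$ (when $a=b$), while passing to closed half-planes adds only $\{3i+j\le 6\}$ (from $a=3b$) and $\{i+j\le 3\}$ (from $a=b$). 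Moreover, the converse ``dictionary'' that you defer as delicate is in fact short once this list is in hand: support in $\{i\le 1\}$ literally says $x_0^2\mid F$; support in $\{2i+j\le 4\}$ says $x_0\mid F$, that the residual $(2,3)$-curve meets $L=\{x_0=0\}$ only at $p=([0{:}1],[0{:}1])$ (only $(2,0)$ survives among $i=2$), and that $p\in\Sing(C')$ (the $(1,3)$ coefficient is absent); support in $\{i+j\le 2\}$ gives multiplicity $\ge 4$ at $p$, forcing both rulings through $p$ to be components, after which one computes that the residual of either ruling meets it only at $p$, singularly --- so one is back in the previous case; and the two closed regions $\{3i+j\le 6\}$, $\{i+j\le 3\}$ correspond exactly to ``line meeting its residual in one (possibly smooth) point'' and ``triple point''. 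Since the prohibited features are $G$-invariant, this closes both the semistable and the stable characterizations; nothing further is needed.

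Two quibbles. First, your statements $\mu(F,\lambda_{3,1})=0$ and $\mu(F,\lambda_{1,1})=0$ conflict with your own convention $\mu=\max$ of the weights (those indices are positive when, e.g., $(0,0)$ appears); you want the inverse one-parameter subgroups, or simply the polytope formulation you already state ($0$ not in the interior of the hull). Second, the ruling swap does not lie in $\SO(4)$ --- it sits in the non-identity component of $\mathrm{PO}(4)$ --- so you cannot use it to reduce to $a\ge b$; instead reduce to $a,b\ge 0$ via the Weyl groups and note that the mirrored regions ($\{j\le1\}$, $\{i+2j\le4\}$, etc.) yield the same geometric configurations with the two rulings exchanged, which is harmless because the statement of the theorem is symmetric in the rulings.
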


\begin{remark}\label{minorbit2}
The closed orbits of strictly semi-stable curves fall into  $4$  categories:
\begin{enumerate}
\item  The maximally degenerate curve $C_{2A_5}=V(x_0x_3-x_1x_2,x_0x_2^2+x_1^2x_3)$ with 2 $A_5$ singularities (same curve as in Rem. \ref{minorbit1}(2));
\item  The triple conic $V(x_0x_3 - x_1 x_2 ,x_3^3)$;
\item  Unions of a smooth conic and a double conic meeting transversally.  As discussed in Remark 2.4 in \cite{maksymg4}, there is a one-dimensional family of such curves;
\item  Unions of three conics meeting in two $D_4$ singularities $V(x_0x_3-x_1x_2,x_1^3+x_2^3)$ (analogue of the case of Rem. \ref{minorbit1}(1)).
\end{enumerate}
\end{remark}

As mentioned above, Fedorchuk \cite{maksymg4} showed that this GIT quotient is the final non-trivial step in the Hassett--Keel program for genus $4$, specifically:
\begin{equation}\label{end2}
 \vert \mathcal{O} (3,3) \vert \gquot \SO(4) \cong \overline{M}_4\left[\frac{29}{60},\frac{8}{17}\right)\to \overline{M}_4\left(\frac{8}{17}\right)=\{\ast\}.
\end{equation}
In this paper we are interested in describing the behavior of the Hassett--Keel program for genus $4$ curves in the interval $\alpha\in \left[\frac{8}{17}, \frac{5}{9}\right]$ (with endpoints described by \eqref{end2} and \eqref{end1} respectively). In particular, in the following sections, we will give an explicit factorization of  the birational map
$$ \Psi: \overline{M}_4\left(\frac{5}{9}\right)\cong \Chow_{4,1} \gquot \SL (4) \dashrightarrow \vert \mathcal{O} (3,3) \vert \gquot \SO(4)\cong \overline{M}_4\left[\frac{29}{60},\frac{8}{17}\right) $$
as the composition of two flips and a divisorial contraction.

For the moment, by comparing the stability conditions given by Theorems \ref{thmcubic} and \ref{thmmaksym} and by simple geometric considerations, we obtain a rough description of the birational map $\Psi$ as summarized in  Table \ref{table2} (see also \cite[Table 1]{maksymg4}).
The first three lines of the table correspond to strictly semi-stable points of $\Chow_{4,1}$ that are all flipped by the map $\overline{M}_4 ( \frac{5}{9} - \epsilon ) \to \overline{M}_4 ( \frac{5}{9} )$.  Then, note that every Chow-stable curve contained in a quadric cone is terminally unstable.  There are three types of such curves: those that do not meet the vertex of the cone, those that meet it in a node, and those that meet it in a cusp.  These correspond to the latter three lines in the table, as well as the three critical slopes in our VGIT problem.  These last three lines correspond, in order, to the flip at $\alpha = \frac{23}{44}$, the flip at $\alpha = \frac{1}{2}$, and the divisorial contraction at $\alpha = \frac{29}{60}$.
\begin{table}[htb!]
\begin{tabular}{|c|l|}
\hline
Semi-stable Singularity & Locus Removed \\
\hline \hline
non-separating $A_5$ & tacnodal curves glued at conjugate points \\
\hline
$A_6$, non-sep. $A_7$, $A_8$, $A_9$  & ribbons (see Rem. \ref{minorbit1}(3)) \\
\hline
$D_4$ & elliptic triboroughs \\
\hline
separating $A_7$ & cuspidal curves with hyperelliptic normalization \\
\hline
contains a double conic & nodal curves with hyperelliptic normalization \\
\hline
triple conic & curves with vanishing theta-null \\
\hline
\end{tabular}
\vspace{0.2cm}
\caption{The birational map  $\Chow_{4,1} \gquot \SL (4) \dashrightarrow \vert \mathcal{O} (3,3) \vert \gquot \SO(4) $ }\label{table2}
\end{table}

%%%%%%%%%%%%%%%%%%%%%%%%%%%%%%%%%%%%%%
% s4
\section{Numerical stability of Points in $\bP E$}\label{sectstability}
In this section we determine the stability conditions on $\bP E$ as the slope $t$ of the linearization varies by using the Hilbert--Mumford numerical criterion.
We note that a discussion of the Hilbert--Mumford index in a related and more general situation than ours was done by Benoist \cite{benoist}, whose results we are using here.

A technical issue arises in this section. Namely, we are interested in applying the numerical criterion for slopes $t\in\left(0,\frac{2}{3}\right]$. However, by Proposition \ref{propnef}, the linearizations of slope $t\ge \frac{1}{2}$ are not ample.  Thus, for $t\ge \frac{1}{2}$, special care is needed to define a GIT quotient $\bP E\gquot_t\SL(4)$ and to understand the stability conditions by means of the numerical criterion. In this section we make the necessary modifications to handle this non-standard GIT case. Namely, here we work with ``numerical'' (semi-)stability instead of the usual (Mumford) (semi-)stability. Then, in Section \ref{S:Non-AmpleBundles}, we prove that there is no difference between the two notions of stability and that everything has the expected behavior. In short, for slopes $t\in\left(0,\frac{1}{2}\right)$ everything works as usual, since the linearization is ample. For $t\ge \frac{1}{2}$ one can still proceed as in the ample case, but this is justified only \emph{a posteriori} by the results of Section \ref{S:Non-AmpleBundles}.

\subsection{The numerical criterion for $\bP E$}
Let us start by recalling the Hilbert--Mumford index for hypersurfaces.  That is, we consider the case of $\SL(r+1)$ acting on $\mathbb PH^0(\mathbb P^r,\mathscr O_{\mathbb P^r}(d))$.
In this case, given a one-parameter subgroup (1-PS) $\lambda:\mathbb G_m\to \SL(r+1)$, the action on $H^0(\mathbb P^r,\mathscr O_{\mathbb P^r}(1))$ can be diagonalized.  We describe the action of $\lambda$ in these coordinates  with a weight vector $\alpha = (\alpha_0 , \alpha_1 , \ldots , \alpha_r)$.  For a monomial $x^a = x_0^{a_0} \cdots x_r^{a_r}\in H^0(\mathbb P^r,\mathscr O_{\mathbb P^r}(d))$ in these coordinates, we define the $\lambda$-weight of $x^a$ to be
$$ wt_{\lambda} (x^a) = \alpha.a = \alpha_0 a_0 + \alpha_1 a_1 + \ldots + \alpha_r a_r. $$
The Hilbert--Mumford invariant associated to a non-zero  homogeneous  form $F\in H^0(\mathbb P^r,\mathscr O_{\mathbb P^r}(d))$ and a 1-PS $\lambda$  is then given by
$$
\mu(F,\lambda)=\max_{x^a \text{ monomials in } F}wt_\lambda(x^a).
$$

Following \cite{benoist}, the Hilbert--Mumford index for complete intersections $V(f,q)$ has a simple expression in terms of the indices  for the associated hypersurfaces.

\begin{proposition}[{\cite[Prop 2.15]{benoist}}]\label{computemu}
The Hilbert--Mumford index of a point $([q],[f]) \in \bP E$ is given by
$$\mu^{a \eta + bh} (([q],[f]), \lambda ) = a \mu (q, \lambda ) + b \mu (f, \lambda ) ,$$
where $f \in H^0 ( \bP^3 , \mathcal{O} (3))$ is a representative of $[f]$ of minimal $\lambda$-weight.
\end{proposition}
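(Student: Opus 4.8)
The Hilbert–Mumford index of a point in $\bP E$ with respect to a linearization $a\eta + bh$ should decompose as the $a$-weighted contribution from the quadric $q$ plus the $b$-weighted contribution from the cubic $f$. To establish this, I would first recall how the index is computed in general: for a linearized line bundle $L$ and a 1-PS $\lambda$, the index $\mu^L(x,\lambda)$ is the negative of the minimal weight of the $\mathbb{G}_m$-action on the fiber $L_{x_0}$, where $x_0 = \lim_{s\to 0}\lambda(s)\cdot x$ is the limit point. So the whole computation reduces to identifying the $\lambda$-action on the fibers of $\eta$ and of $h$ over the limit point of $([q],[f])$.

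\emph{Key steps, in order.} First I would treat the $\eta$ contribution. Since $\eta = \pi^*\mathscr{O}_{\bP V_2}(1)$ is pulled back from the base $\bP V_2$, and the point $([q],[f])$ maps to $[q]\in\bP V_2$, the weight of $\lambda$ on the fiber $\eta_{([q],[f])}$ is precisely the weight on $\mathscr{O}_{\bP V_2}(1)$ over $[q]$. This is exactly the Hilbert–Mumford setup for the quadric $q$ viewed as a point of $\bP V_2 = \bP H^0(\bP^3,\mathscr{O}(2))$, giving the term $\mu(q,\lambda)$. Second, and this is the more delicate step, I would compute the weight of $\lambda$ on the fiber $h = \mathscr{O}_{\bP E}(1)$. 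By the definition of $\bP E$ as $\underline{\Proj}_{\bP V_2}(\mathscr{E}^\vee)$, the fiber $\bP E_{[q]}$ is $\bP(V_3/\langle x_0 q,\dots,x_3 q\rangle)$, and the point $[f]$ lies in this quotient. The weight on $h$ is computed from the $\lambda$-action on this quotient vector space, and the correct representative to use is the lift $f\in V_3$ of minimal $\lambda$-weight among all lifts, i.e.\ minimizing over the coset $f + \langle x_0 q,\dots,x_3 q\rangle$. The minimal weight of this coset, paired with the $\lambda$-weight filtration, yields exactly $\mu(f,\lambda)$ where $f$ is the minimal-weight representative. Since the index is linear in the linearization, $\mu^{a\eta+bh} = a\mu^\eta + b\mu^h$, and assembling the two computations gives the claimed formula.

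\emph{The main obstacle.} The subtle point is the quotient structure of the fibers of $\bP E$: unlike the hypersurface case where one works directly in $H^0(\bP^3,\mathscr{O}(3))$, here $f$ is only well-defined modulo the submodule $q\cdot V_1 = \langle x_0 q,\dots,x_3 q\rangle$. I would need to verify carefully that taking the minimal-weight representative $f\in V_3$ of $[f]$ correctly computes the $\lambda$-weight on the fiber of $h$, and in particular that this minimization is compatible with the exact sequence \eqref{eqnefp} defining $\mathscr{E}$. Concretely, one must check that the weight of $\lambda$ on $\mathscr{O}_{\bP E}(1)_{([q],[f])}$ equals $\max_{x^a\text{ in }f}\mathrm{wt}_\lambda(x^a)$ for the minimal representative, which amounts to understanding how the $\lambda$-action on the total space $V_3\otimes\mathscr{O}_{\bP V_2}$ descends to the quotient bundle $\mathscr{E}$. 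Since this is precisely the content of Benoist's Proposition 2.15, cited as the source, the cleanest route is to invoke that result directly; a self-contained argument would proceed by the minimal-representative analysis just sketched, with the compatibility with \eqref{eqnefp} being the step requiring the most care.
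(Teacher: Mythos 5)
Your proposal matches the paper exactly, because the paper gives no proof of this proposition at all: it is stated as a direct quotation of Benoist's Proposition 2.15, which is precisely the fallback your proposal settles on. One caution about your supplementary sketch, should you ever try to make it self-contained: since $\lambda$ in general moves the point $[q]$, it does \emph{not} act on the fiber $\bP\left(V_3/\langle x_0q,\ldots,x_3q\rangle\right)$, so the weight on $h$ cannot literally be "computed from the $\lambda$-action on this quotient vector space"; it must be read off from the fiber over the limit point, which lies over $\lim_{s\to 0}\lambda(s)\cdot[q]$ (where the quadric may degenerate), and the minimal-weight-representative prescription in the statement is exactly what encodes this passage to the limit --- carrying that out is the actual content of Benoist's proof.
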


Recall from \S \ref{Preliminaries} that the {\it slope} of the line bundle $a\eta +bh$ is defined to be $t=\frac{b}{a}$.  Throughout we will write $\mu^t (([q],[f]), \lambda )$ for the Hilbert--Mumford index with respect to the linearization $\eta+th$.

\begin{definition} We say that $([q],[f])$ is {\it numerically $t$-stable} (resp.~{\it numerically $t$-semi-stable}) if, for all non-trivial one-parameter subgroups $\lambda$,
$$ \mu^t (([q],[f]), \lambda ) > 0 \text{ (resp. } \ge 0 \text{ )} .$$
\end{definition}

While we will typically only refer to numerical (semi-)stability for points of $\mathbb PE$, we will occasionally want to refer to this notion in more generality.  Recall that the definition can be made in the situation where one has a reductive group $G$ acting on a proper space $X$ with respect to a linearization $L$ (\cite[Def.~2.1, p.48]{GIT}).  We will use the notation $X^{nss}$ and $X^{ns}$ to refer to the numerically semi-stable, and numerically stable loci respectively.

\begin{remark}\label{remnumstab}
We recall that for the general GIT set-up, with a reductive group $G$ acting on a space $X$ with respect to a linearization $L$, Mumford \cite[Def. 1.7]{GIT} defines  a point $x\in X$ to be {\it semi-stable} (and a similar definition for stable) if there exists an invariant section $\sigma\in H^0(X,L^{\otimes n})$ such that $\sigma(x)\neq 0$ and $X_{\sigma}$ is affine. We will use the standard notation $X^{(s)s}$ to denote the (semi-)stable points in this sense. To emphasize the distinction with numerical (semi-)stability, and avoid confusion, we will sometimes refer to this as Mumford (semi-)stability.  For \emph{ample} line bundles on projective varieties, the Hilbert--Mumford numerical criterion (\cite[Thm. 2.1]{GIT}) gives that numerical (semi-)stability agrees with  (semi-)stability.  If $L$ is not ample, however, the notions  may differ (see e.g.~Remark \ref{remexample}).  In our situation, we work with numerical stability, since it is easily computable; in the end (using the results in Section \ref{S:Non-AmpleBundles}), we will prove that this is same as Mumford stability. Of course, this distinction is only relevant in the non-ample case (i.e.~linearizations of slopes $t\ge \frac{1}{2}$).
\end{remark}

\begin{remark}\label{remexample}
The following simple example illustrates some of the differences between numerical stability and Mumford stability.  Let $G$ be a reductive group acting on a smooth projective variety $X$ with $\dim(X)\ge 2$, and let  $L$ be an ample linearization.   Consider the blow-up $\pi: X'\to X$ along a closed $G$-invariant locus $Z$ (with $\mathrm{codim} Z\ge 2$) that contains at least one semi-stable point $p\in Z\cap X^{ss}$.
 Note that the rings of invariant sections $R(X,L)^G$ and $R(X',\pi^*L)^G$ agree via pullback of sections, and the Hilbert-Mumford indices agree by functoriality (\cite[iii), p.49]{GIT}).   It follows that any point $q$ in the fiber $\pi^{-1}(p)$ (contained in the exceptional divisor $E$) will be numerically  semi-stable. But no such point  can be Mumford semi-stable, because the pull-back of a section $\sigma$ that does not vanish at $q$ does not vanish on $\pi^{-1}(p)$, and consequently $X'_\sigma$ can not be  affine.
\end{remark}

\begin{notation}\label{GITnotation}
When considering GIT quotients,  we will use the notation $X\gquot_LG$ for the categorical quotient of the semi-stable locus $X^{ss}$ (\cite[Thm.~1.10]{GIT}); we will call this the (categorical)  GIT quotient.  Note this may not necessarily agree with $\operatorname{Proj}R(X,L)^G$ when $L$ is not ample.
\end{notation}

\subsection{Application of the numerical criterion}\label{sectnumerical}
We begin our discussion by identifying points of $\bP E$ that fail to be numerically semi-stable for any linearization.
Note that in order to show that a certain pair $([q],[f])$ is not $t$-numerically semi-stable, it suffices to find a $1$-PS $\lambda$ and a representative $f$ such that $\mu(q,\lambda)+t\mu(f,\lambda)<0$, since for any representative $f$, one has $\mu^t(([q],[f]),\lambda)\le \mu(q,\lambda)+t\mu(f,\lambda)$ (cf. Proposition \ref{computemu}).

\begin{proposition}
\label{NoReducibleQuadric}
If $q$ is a reducible quadric, then $(q,f)$ is not numerically $t$-semi-stable for any $t < \frac{2}{3}$.  Moreover, if $q$ and $f$ share the common linear factor $x_0$, then $(q,f)$ is destabilized by the 1-parameter subgroup with weights $(-3,1,1,1)$ for any $t \leq \frac{2}{3}$.
\end{proposition}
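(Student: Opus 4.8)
The plan is to apply the numerical criterion through Proposition \ref{computemu}. Recall that to show a pair $([q],[f])$ fails to be numerically $t$-semi-stable it suffices to exhibit a single one-parameter subgroup $\lambda$ with $\mu^t(([q],[f]),\lambda)<0$, and that for \emph{any} representative $f\in V_3$ of $[f]$ one has $\mu^t(([q],[f]),\lambda)\le \mu(q,\lambda)+t\,\mu(f,\lambda)$ (the inequality recorded just before the proposition, valid because $t\ge 0$ and $\mu(f,\lambda)$ is computed on the minimal-weight representative). So in each case I will choose a convenient $\lambda$, bound $\mu(q,\lambda)$ from above by a direct monomial computation, bound $\mu(f,\lambda)$ from above, and read off the sign from the slope $t$.

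For the first statement I would first pass to normal form. A reducible quadric has rank at most $2$, so after an $\SL(4)$ change of coordinates (which merely conjugates the destabilizing one-parameter subgroup and leaves the Hilbert--Mumford index unchanged) we may assume $q=x_0x_1$ or $q=x_0^2$; in either case $q$ involves only $x_0,x_1$. Now take $\lambda$ with weights $(-1,-1,1,1)$. Every monomial of $q$ has $\lambda$-weight $-2$, so $\mu(q,\lambda)=-2$, while every degree-$3$ monomial has $\lambda$-weight at most $3$ (attained by $x_2^3$), so $\mu(f,\lambda)\le 3$. Hence
$$\mu^t(([q],[f]),\lambda)=\mu(q,\lambda)+t\,\mu(f,\lambda)\le -2+3t,$$
which is strictly negative precisely when $t<\frac{2}{3}$. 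Since this bound does not involve $f$, the same $\lambda$ destabilizes $(q,f)$ for every choice of $f$.

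For the second statement I would use the prescribed $\lambda=(-3,1,1,1)$ (whose weights sum to $0$). Write $q=x_0\ell$ and $f=x_0 g$. Every monomial of $q$ is $x_0$ times a linear monomial, hence has weight at most $-3+1=-2$, giving $\mu(q,\lambda)\le -2$. The crux is the behavior of $f$: since $q=x_0\ell$, the subspace $\langle x_0q,\dots,x_3q\rangle$ is contained in $x_0\cdot V_2$, so \emph{every} representative of $[f]$ is divisible by $x_0$. Each representative is therefore $x_0$ times a quadric and has weight at most $-3+2=-1$, whence $\mu(f,\lambda)\le -1$. Combining,
$$\mu^t(([q],[f]),\lambda)\le -2+t(-1)=-2-t<0 \quad\text{for all } t\ge 0,$$
in particular for all $t\le \frac{2}{3}$, as claimed.

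The only genuinely delicate point is the treatment of $\mu(f,\lambda)$, which by definition uses the minimal-weight representative in the fiber $V_3/\langle x_iq\rangle$. In the first part this causes no trouble, since a crude upper bound over all cubic monomials already suffices; in the second part it is exactly the observation that the common factor $x_0$ persists in every representative that yields the sharper bound $\mu(f,\lambda)\le -1$, and hence destabilization up to and including the boundary slope. This dichotomy also accounts for the difference in the two ranges: the bound $-2+3t$ only vanishes, rather than being negative, at $t=\frac{2}{3}$, giving the open interval $t<\frac{2}{3}$ in the first statement, whereas the stronger bound $-2-t$ remains negative at $t=\frac{2}{3}$, giving the closed range in the second.
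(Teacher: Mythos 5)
Your proof is correct and follows essentially the same route as the paper: normalize the reducible quadric so it involves only two variables, destabilize with the 1-PS of weights $\pm(1,1,-1,-1)$ using $\mu(q,\lambda)=-2$ and $\mu(f,\lambda)\le 3$, and for the second statement use the prescribed weights $(-3,1,1,1)$ with the bounds $\mu(q,\lambda)\le -2$, $\mu(f,\lambda)\le -1$. Your observation that \emph{every} representative of $[f]$ stays divisible by $x_0$ is true but slightly more than needed, since the Hilbert--Mumford index uses the minimal-weight representative and it suffices to bound the weight of any single representative.
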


\begin{proof}
Suppose that $q$ is singular along the line $x_2 = x_3 = 0$, and consider the 1-PS $\lambda$ with weights $(1,1, -1, -1)$.  Then $\mu (q, \lambda ) = -2$ and $\mu (f, \lambda ) \leq 3$.  Hence $ \mu^t ((q,f), \lambda ) < 0$, so $(q,f)$ is not numerically $t$-semi-stable.

To see the second statement, let $\lambda$ be the 1-PS with weights $(-3,1,1,1)$ and note that $\mu (q, \lambda ) \leq -2$, $\mu (f, \lambda ) \leq -1$.
\end{proof}

Note that, as a consequence, every numerically $t$-semi-stable point of $\bP E$ for $t < \frac{2}{3}$ is a complete intersection.  The only points of $\bP E$ that do not correspond to complete intersections are those where $q$ and $f$ share a common linear factor.  Henceforth, we will refer interchangeably to stability of the point $(q,f) \in \bP E$ and stability of the curve $C = V(q,f)$.

\begin{proposition}
If $q$ and $f$ are simultaneously singular, then $(q,f)$ is not numerically $t$-semi-stable for any $t \le \frac{2}{3}$.
\end{proposition}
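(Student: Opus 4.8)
The plan is to produce a single destabilizing one-parameter subgroup that concentrates weight on the common singular point. By $\SL(4)$-invariance of the Hilbert--Mumford index (replacing $(q,f)$ by a translate and conjugating the 1-PS accordingly does not change the index), I may first change coordinates so that the common singular point is $p=[1:0:0:0]=e_0$. Recall from the discussion preceding Proposition \ref{NoReducibleQuadric} that, to prove $(q,f)$ is not numerically $t$-semi-stable, it suffices to exhibit one 1-PS $\lambda$ and one representative $f$ with $\mu(q,\lambda)+t\mu(f,\lambda)<0$, since $\mu^t(([q],[f]),\lambda)\le \mu(q,\lambda)+t\mu(f,\lambda)$ by Proposition \ref{computemu}.

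Next I would translate the hypothesis into weight conditions. Since $q$ is singular at $e_0$, its gradient vanishes there, which forces the symmetric matrix of $q$ to kill $e_0$; equivalently $q$ contains no monomial divisible by $x_0$, so $q=q'(x_1,x_2,x_3)$ is a nonzero quadratic form in $x_1,x_2,x_3$. Since $f$ is singular at $e_0$, it has multiplicity at least two there, which is equivalent to every monomial of $f$ having $x_0$-degree at most $1$. Here I would note that, because $q(e_0)=0$, this last condition is independent of the chosen representative of $[f]$: adding a multiple $\ell q$ preserves both the vanishing and the singularity of $f$ at $e_0$, so any representative may be used.

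With these normalizations I would apply the 1-PS $\lambda$ with weight vector $(3,-1,-1,-1)$, which lies in $\SL(4)$. Every monomial of $q$ has the form $x_ix_j$ with $i,j\in\{1,2,3\}$ and hence $\lambda$-weight $-2$, so $\mu(q,\lambda)=-2$. A degree-three monomial of $x_0$-degree $d_0$ has $\lambda$-weight $4d_0-3$, and since $d_0\le 1$ for every monomial of $f$, the maximum is at most $1$, giving $\mu(f,\lambda)\le 1$. Therefore $\mu^t(([q],[f]),\lambda)\le -2+t\le -2+\tfrac{2}{3}<0$ for every $t\le\tfrac{2}{3}$, proving that $(q,f)$ is not numerically $t$-semi-stable.

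I expect no serious obstacle: the content lies entirely in translating the geometric hypothesis into the monomial conditions on $q$ and $f$, and in the choice of the weight $(3,-1,-1,-1)$ that isolates the coordinate $x_0$ of the common singular point. The only point requiring a little care is the independence of the condition on $f$ from the chosen representative, which follows from $q$ vanishing at $e_0$ as noted above; once this is in place the estimate is immediate.
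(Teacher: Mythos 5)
Your proof is correct and follows essentially the same route as the paper: after normalizing the common singular point to $(1,0,0,0)$, both use the 1-PS with weights $(3,-1,-1,-1)$ and the bounds $\mu(q,\lambda)=-2$, $\mu(f,\lambda)\le 1$ to conclude $\mu^t\le -2+\tfrac{2}{3}t\cdot\tfrac{3}{2}\cdot\tfrac{2}{3}\le-\tfrac{4}{3}<0$. The extra details you supply (the monomial translation of singularity and the representative-independence) are fine but already implicit in the paper's one-line argument.
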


\begin{proof}
Suppose that $q$ and $f$ are both singular at the point $(1,0,0,0)$, and consider the 1-PS with weights $(3,-1,-1,-1)$.  Then $\mu (q, \lambda ) = -2$ and $\mu (f, \lambda ) \leq 1$.  Hence $ \mu^t ((q,f), \lambda ) \leq - \frac{4}{3} < 0$, so $(q,f)$ is not numerically $t$-semi-stable.
\end{proof}

\begin{proposition}
\label{ConePoint}
Suppose that $q$ is a quadric cone and $f$ passes through the singular point $p$ of $q$.  If $p$ is not a node or a cusp of $C$, then $(q,f)$ is not numerically $t$-semi-stable for any $t < \frac{2}{3}$.
\end{proposition}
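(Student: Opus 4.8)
The plan is to produce a single destabilizing one-parameter subgroup, after using the hypotheses to normalize coordinates and to translate the geometric conditions ``$p$ is not a node or a cusp'' into the vanishing of three specific coefficients of $f$. Throughout I write $\ell_1$ for the linear part of $f$ at $p$ (the tangent plane to $\{f=0\}$). First I would dispose of the case $\ell_1 = 0$: then $f$ is singular at $p$ as well, so $q$ and $f$ are simultaneously singular and $(q,f)$ is already not numerically $t$-semi-stable for any $t\le \frac23$ by the preceding proposition. Hence I may assume $\ell_1\ne 0$, so that $\{f=0\}$ is a \emph{smooth} surface at $p$; this is the key technical move, since $q$ is singular at $p$ and cannot be used for local coordinates, whereas the smooth cubic surface can.

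Next I would normalize: take $p=(1,0,0,0)$ and $q = x_2^2 - x_1x_3$ (a rank-$3$ cone with vertex $p$). The tangent cone of $C$ at $p$ is $\{q=0\}\cap\{\ell_1=0\}$, a length-two subscheme of $\bP^2$, and $p$ is a node exactly when it is reduced; so ``$p$ is not a node'' together with $\ell_1\ne 0$ forces $\ell_1=0$ to be tangent to the cone. Using the stabilizer of $q$ (which acts transitively on the rulings) I arrange $\ell_1 = x_1$, i.e.\ the tangent plane is $x_1=0$, tangent to the cone along the ruling $x_1=x_2=0$. Solving $f=0$ for $x_1$ and restricting to the smooth surface $\{f=0\}$ with local coordinates $x_2,x_3$, the curve $C$ becomes a planar singularity of the form $x_2^2 + F\,x_3^3 + (\text{higher order})=0$, where $F$ is the coefficient of $x_0x_3^2$ in $f$. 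Thus $p$ is a cusp if and only if $F\ne 0$, and ``$p$ is not a cusp'' yields $F=0$.

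With these normalizations ($f(p)=0$, $\ell_1=x_1$, and $F=0$) I would apply the one-parameter subgroup $\lambda$ with weights $(3,-3,-1,1)$ (which lies in $\SL(4)$). One computes $\mu(q,\lambda) = \max(2\cdot(-1),\,-3+1) = -2$. Reducing $f$ modulo $\langle x_0q,x_1q,x_2q,x_3q\rangle$ so that every monomial has $x_2$-degree $\le 1$, the only basis monomials of $\lambda$-weight strictly greater than $3$ are $x_0^3,\ x_0^2x_2,\ x_0^2x_3,\ x_0x_3^2$; their coefficients vanish by $f(p)=0$, by $\ell_1=x_1$, and by $F=0$ respectively (and one checks these coefficients are unaffected by the reduction). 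Hence this representative has $\lambda$-weight $\le 3$, so $\mu^t((q,f),\lambda)\le \mu(q,\lambda)+t\cdot 3 = -2+3t < 0$ for all $t<\frac23$, proving numerical $t$-instability. The main obstacle is the local singularity analysis in the second paragraph, where one must work on the smooth surface $\{f=0\}$ rather than on the singular cone, and then select the weight vector so that its high-weight monomials are \emph{precisely} the ones the hypotheses force to vanish; the remaining weight computation is routine bookkeeping.
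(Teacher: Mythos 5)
Your proof is correct and takes essentially the same route as the paper's: after the same normalizations (your coordinates differ from the paper's only by the swap $x_1 \leftrightarrow x_3$), the non-node and non-cusp hypotheses kill exactly the coefficients of $x_0^3$, $x_0^2x_2$, $x_0^2x_3$, $x_0x_3^2$, and your 1-PS with weights $(3,-3,-1,1)$ is precisely the paper's $(3,1,-1,-3)$ under that swap, giving the identical estimate $\mu^t((q,f),\lambda)\le -2+3t<0$ for $t<\frac{2}{3}$. The only difference is expository: you carry out the local computation on the smooth cubic surface to justify the cusp criterion (and the dispatch of the $\ell_1=0$ case via simultaneous singularity), steps the paper asserts without detail.
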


\begin{proof}
Without loss of generality, we may assume that $q = x_1 x_3 - x_2^2$.  We write the cubic in coordinates as
$$ f = \sum_{a+b+c+d=3} \alpha_{a,b,c,d} x_0^a x_1^b x_2^c x_3^d .$$
If $p = (1,0,0,0)$ is not a node of $C$, then the projectivized tangent cone to $C$ at $p$ is a double line contained in the quadric cone.  Hence, the tangent space to the cubic at $p$ meets the quadric in a double line.  We may therefore assume that this tangent space is the plane $x_3 = 0$.  It follows that $\alpha_{3,0,0,0} = \alpha_{2,1,0,0} = \alpha_{2,0,1,0} = 0$.  Since $p$ is not a cusp, we have $\alpha_{1,2,0,0} = 0$ as well.  Now, consider the 1-PS with weights $(3,1,-1,-3)$.  Then $\mu (q , \lambda ) = -2$ and $\mu (f, \lambda ) \leq 3$.  It follows that $ \mu^t ((q,f), \lambda ) < 0$, so $(q,f)$ is not numerically $t$-semi-stable.
\end{proof}

\begin{corollary}\label{corribbon}
Ribbons are not numerically $t$-semi-stable for any $t <\frac{2}{3}$.
\end{corollary}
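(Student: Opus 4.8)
The plan is to reduce the statement to a single explicit instance and then destabilize it with one well-chosen one-parameter subgroup. By Remark~\ref{remribbons}, up to a change of coordinates there is a unique canonically embedded genus $4$ ribbon, cut out by the quadric $q = x_1 x_3 - x_2^2$ and the cubic $f = \det(\cdots)$. Since numerical $t$-(semi-)stability is invariant under the $\SL(4)$-action, it suffices to show that this one ribbon fails to be numerically $t$-semi-stable for every $t < \frac{2}{3}$: applying the destabilizing $1$-PS through the appropriate conjugate then handles every ribbon in $\bP E$.

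First I would place the ribbon into the framework of Proposition~\ref{ConePoint}. The quadric $q = x_1 x_3 - x_2^2$ has rank $3$, hence is a quadric cone, and its vertex is the point $p = (1,0,0,0)$; expanding the determinant gives
$$ f = -x_0^2 x_3 + 2 x_0 x_1 x_2 - x_1^3, $$
so $f(p) = 0$ and $f$ passes through the vertex. Moreover, in these coordinates the four coefficients $\alpha_{3,0,0,0}$, $\alpha_{2,1,0,0}$, $\alpha_{2,0,1,0}$, $\alpha_{1,2,0,0}$ of $f$ all vanish, so the pair $(q,f)$ already sits in exactly the normal form produced in the proof of Proposition~\ref{ConePoint}. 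I would therefore mirror that proof directly: taking the $1$-PS $\lambda$ with weights $(3,1,-1,-3)$, one reads off $\mu(q,\lambda) = -2$ (both monomials $x_1 x_3$ and $x_2^2$ have weight $-2$) and $\mu(f,\lambda) = 3$ (each of the three monomials of $f$ has weight $3$). Hence, for any representative, $\mu^t((q,f),\lambda) \le \mu(q,\lambda) + t\,\mu(f,\lambda) = -2 + 3t$, which is negative precisely when $t < \frac{2}{3}$, giving the claim.

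The essentially costless alternative is to invoke Proposition~\ref{ConePoint} as a black box: the ribbon is a non-reduced scheme, so its singularity at the vertex $p$ is neither a node nor a cusp, and the proposition applies verbatim. There is no serious obstacle here; the only point warranting care is that the cubic of Remark~\ref{remribbons} genuinely lands in the normal form of Proposition~\ref{ConePoint}, which is why I would expand the determinant explicitly and record the vanishing of the four relevant coefficients, rather than appeal to the tangent-cone description (phrased for reduced curves) when $C$ is non-reduced. The reduction to the single ribbon via $\SL(4)$-invariance together with the uniqueness statement of Remark~\ref{remribbons} is what collapses the whole argument to this one-line weight computation.
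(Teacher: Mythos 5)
Your proposal is correct and is essentially the paper's own argument: the paper proves the corollary precisely via your "black-box" alternative, combining Remark \ref{remribbons} with Proposition \ref{ConePoint}, since a ribbon is non-reduced and hence has neither a node nor a cusp at the vertex of the quadric cone. Your explicit expansion of the determinant, the verification that $\alpha_{3,0,0,0}=\alpha_{2,1,0,0}=\alpha_{2,0,1,0}=\alpha_{1,2,0,0}=0$, and the weight computation with the $1$-PS $(3,1,-1,-3)$ simply inline the proof of Proposition \ref{ConePoint} for this one curve, so this is the same approach with some extra (harmless, and indeed careful) verification.
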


\begin{proof}  This follows from Remark \ref{remribbons} and  the proposition above.
\end{proof}

\begin{proposition}
\label{LineComponent}
Suppose that $C$ contains a line $L$ and let $C' =\overline{C \backslash L}$ be the residual curve.  If $p \in L\cap C'$ is a singular point of $C'$, then $C$ is not numerically $t$-stable for any $t \le \frac{2}{3}$.  If, in addition, $L$ meets $C'$ with multiplicity $3$ at $p$, then $C$ is not numerically $t$-semi-stable for any $t \le \frac{2}{3}$.  In particular, if $C$ contains a double line, then it is not numerically $t$-semi-stable for any $t \le \frac{2}{3}$.
\end{proposition}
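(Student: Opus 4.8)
The plan is to exhibit explicit destabilizing one-parameter subgroups, using Proposition \ref{computemu} to reduce the index to $\mu^t((q,f),\lambda)=\mu(q,\lambda)+t\,\mu(f,\lambda)$ together with the observation (made just before Proposition \ref{NoReducibleQuadric}) that \emph{any} representative of $[f]$ gives an upper bound for $\mu^t$. First I would fix coordinates so that the singular point is $p=(1,0,0,0)$ and $L=\{x_2=x_3=0\}$. Since $L\subset C\subset V(q)$, both $q$ and $f$ lie in the ideal $(x_2,x_3)$; in particular $q$ has no $x_1^2$-term. The essential geometric translation is that, when $q$ is smooth at $p$, the curve $C$ is the divisor of $f$ on the surface $V(q)$, so $f|_{V(q)}=x_2\,g$ with $g$ a local equation for $C'$, and $\operatorname{mult}_p C=1+\operatorname{mult}_p C'$. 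Thus ``$p$ is singular on $C'$'' is equivalent to $f|_{V(q)}$ vanishing to order $\ge 3$ at $p$, and ``$L$ meets $C'$ with multiplicity $3$'' is equivalent to the vanishing of the $x_1^2$-coefficient of $g$ (i.e.\ of the $x_1^2x_2$-coefficient of $f|_{V(q)}$). I would record these as explicit vanishing conditions on the coefficients of $f$.

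In the main case $q$ is smooth at $p$, and I would normalize its tangent plane to $\{x_3=0\}$, so that $q=x_0x_3+q'(x_1,x_2,x_3)$ with $q'\in(x_2,x_3)$. For the first (non-strict) statement I would use the subgroup $\lambda$ with weights $(1,0,0,-1)$: here $\mu(q,\lambda)=0$, and the order-$2$ conditions coming from $\operatorname{mult}_p C\ge 3$ say exactly that, after subtracting the correct multiple of $x_0q$, the representative $\tilde f$ has no monomials of positive $\lambda$-weight, so $\mu(\tilde f,\lambda)\le 0$ and $\mu^t((q,f),\lambda)\le 0$ for all $t\ge 0$; this gives non-$t$-stability. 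For the second (strict) statement I would instead take the subgroup with weights $(3,1,-1,-3)$, adapted to the degenerate tangent cone $x_2^2\cdot(\text{line})$ forced by the tangency. Again $\mu(q,\lambda)=0$, and for these weights there are \emph{no} monomials of $\lambda$-weight $0$; after subtracting the appropriate multiples of $x_0q$ and $x_1q$, the order-$2$ conditions eliminate all weight-$3$ monomials and all but one of the weight-$1$ monomials, and the multiplicity-$3$ (tangency) condition is precisely the identity that cancels the last one, the coefficient of $x_1^2x_2$. This yields $\mu(f,\lambda)\le -1$, hence $\mu^t<0$ for $0<t\le\frac{2}{3}$, i.e.\ non-$t$-semistability.

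The remaining cases I would dispatch separately. If $q$ is singular at $p$, then either $q$ is a cone with vertex $p$ — in which case $p$ is neither a node nor a cusp of $C$ (as $\operatorname{mult}_p C\ge 3$) and Proposition \ref{ConePoint} applies — or $q$ is reducible, and Proposition \ref{NoReducibleQuadric} applies. Finally, if $C$ contains a double line $2L$, then $q,f\in(x_2,x_3)^2$, and the subgroup with weights $(1,1,-1,-1)$ gives $\mu(q,\lambda)=-2$ and $\mu(f,\lambda)\le -1$, so $\mu^t((q,f),\lambda)\le-(2+t)<0$ for every $t$.

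I expect the main obstacle to be the bookkeeping in the strict statement: one must verify that the precise coefficient identity produced by the multiplicity-$3$ hypothesis — obtained by expressing the restriction $f|_{V(q)}$ through the substitution $x_3=-q'(x_1,x_2,0)+\cdots$ and reading off the $x_1^2x_2$-coefficient — matches \emph{exactly} the single multiple-of-$x_1q$ correction needed to kill the surviving weight-$1$ monomial under $(3,1,-1,-3)$. A secondary point is confirming that the endpoint $t=\frac{2}{3}$ is included in the cone sub-case, which either follows by re-running the explicit subgroup above or from the Chow stability analysis of Theorem \ref{thmcubic}.
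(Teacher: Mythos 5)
Your treatment of the two main statements is correct and is essentially the paper's own proof: the same reduction to the case where $p$ is a smooth point of $q$ (via Propositions \ref{NoReducibleQuadric} and \ref{ConePoint}), the same normalization of coordinates, and the same destabilizing subgroups, $(1,0,0,-1)$ for non-stability and $(3,1,-1,-3)$ for non-semi-stability. Your coefficient bookkeeping is in fact slightly more complete than the paper's: the printed proof lists only $\alpha_{2,0,1,0}=\alpha_{1,1,1,0}=0$ as consequences of the singularity of $C'$, whereas the weight-$(1,0,0,-1)$ computation also needs the vanishing of the coefficient of $x_0x_2^2$, which (as your restriction-to-$V(q)$ argument shows) does follow once $\alpha_{2,0,0,1}$ has been normalized to zero. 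Your explicit flag of the $t=\tfrac{2}{3}$ endpoint in the cone sub-case is also legitimate; the paper glosses over it.

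The double-line case, however, contains a genuine error. ``$C$ contains a double line'' means that the divisor (cycle) $2L$ is contained in $C$, not that the scheme $V\bigl((x_2,x_3)^2\bigr)$ is; the latter lies on no quadric of rank $\geq 3$, so under your reading the case would be vacuous exactly where it matters (e.g.\ a $(3,3)$-curve on a smooth quadric containing a double ruling line, which is precisely the kind of curve Theorem \ref{thmmaksym} requires to be destabilized). Concretely, $2L\subseteq C$ gives only $q\in(x_2,x_3)$, so for a rank-$\geq 3$ quadric one has $\mu(q,\lambda)=0$, not $-2$, for your weights $(1,1,-1,-1)$; and only \emph{some representative} of $[f]$ lies in $(x_2,x_3)^2$ --- and even that can fail. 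For $q=x_1x_3-x_2^2$ and $f=x_0^2x_3$, the curve contains the double line $x_2=x_3=0$ through the vertex, yet every representative of $[f]$ contains the monomial $x_0^2x_3$ of weight $+1$, so by Proposition \ref{computemu} one gets $\mu^t((q,f),\lambda)\geq t>0$: your subgroup does not destabilize this curve at all. On a smooth quadric your subgroup does work after correcting the constants ($\mu(q,\lambda)\leq 0$ and $\mu(f,\lambda)\leq -1$ for the representative in $(x_2,x_3)^2$, giving $\mu^t\leq -t$), but the cone case then needs a separate argument. The paper sidesteps all of this with a one-line reduction: write $C=L+C'$ with $C'=L+C''$ and apply the multiplicity-$3$ case at a point $p\in L\cap C''$ (nonempty by degree considerations); all the coefficient vanishings used there hold automatically, since $f|_{V(q)}$ is locally divisible by the square of the equation of $L$.
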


\begin{proof}
By Propositions \ref{NoReducibleQuadric} and \ref{ConePoint}, we may assume that the singular point $p = (1,0,0,0)$ is a smooth point of the quadric $q$.  Without loss of generality, we may assume that the line $L$ is cut out by $x_2 = x_3 = 0$ and that the tangent plane to $q$ at $p$ is cut out by $x_3 = 0$.  As above, we write the cubic in coordinates as
$$ f = \sum_{a+b+c+d=3} \alpha_{a,b,c,d} x_0^a x_1^b x_2^c x_3^d .$$
By replacing $f$ with a cubic of the form $f - ( \alpha x_0 + \beta x_1 )q$ for suitable choices of $\alpha$ and $\beta$, we obtain a representative for $f$ such that $\alpha_{2,0,0,1} = \alpha_{1,1,0,1} = 0$.  From the assumption that $C$ contains $L$, we may conclude that $\alpha_{a,3-a,0,0} = 0$ ($a=0,1,2,3$).  From the assumption that $C'$ is singular at $p$, we may further conclude that $\alpha_{2,0,1,0} = \alpha_{1,1,1,0}  = 0$.
Now consider the 1-PS $\lambda$ with weights $(1,0,0,-1)$.  Then $\mu (q, \lambda ) \leq 0$ and $\mu (f, \lambda ) \leq 0$.  It follows that $\mu^t ((q,f), \lambda ) \le 0$, so $(q,f)$ is not numerically $t$-stable.

Let us now assume further that $L$ meets $C'$ with multiplicity 3 at $p$.  Then we obtain in addition that $\alpha_{0,2,1,0} = 0$.  Considering the 1-PS $\lambda$ with weights $(3,1,-1,-3)$, we see that $\mu (q, \lambda ) \leq 0$ and $\mu (f, \lambda ) \leq -1$.  It follows that $\mu^t ((q,f), \lambda ) < 0$, so $(q,f)$ is not numerically $t$-semi-stable.

The case of a double line follows by taking the reduced line and its residual curve; i.e.~$C=2L+C''=L+C'$ where $C'=L+C''$.
\end{proof}

\begin{proposition}
\label{TriplePoints}
If $C$ has a singularity of multiplicity greater than two, it is not numerically $t$-stable for any $t \leq \frac{2}{3}$.  Moreover, if $C$ has a singularity of multiplicity greater than three, it is not numerically $t$-semi-stable for any $t \leq \frac{2}{3}$.
\end{proposition}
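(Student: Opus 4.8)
The plan is to exhibit a single one-parameter subgroup for each multiplicity bound that forces the Hilbert--Mumford index negative (or non-positive) for the relevant range of slopes, using the additivity formula $\mu^t((q,f),\lambda)=\mu(q,\lambda)+t\mu(f,\lambda)$ of Proposition \ref{computemu} together with the observation that it suffices to produce \emph{some} representative $f$ of minimal weight, since for any representative one has $\mu^t(([q],[f]),\lambda)\le \mu(q,\lambda)+t\mu(f,\lambda)$. By the earlier propositions (\ref{NoReducibleQuadric}, \ref{ConePoint}) I may assume the singular point of multiplicity $\ge 3$ lies at a smooth point of the quadric $q$, and I normalize coordinates so that this point is $p=(1,0,0,0)$ with tangent plane to $q$ at $p$ given by $x_3=0$; hence $q=x_0x_3+q'(x_1,x_2,x_3)$ after scaling, where $q'$ has no $x_0$ term.

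First I would handle the multiplicity-three case. A singularity of multiplicity $\ge 3$ at $p$ means $f$ has no terms of degree $\le 2$ in $x_1,x_2,x_3$ when expanded around $p$; concretely, every monomial of $f$ has $x_0$-exponent $a\le 0$, i.e.\ $\alpha_{a,b,c,d}=0$ whenever $a\ge 1$ (the terms $x_0^3,x_0^2(\text{linear}),x_0(\text{quadratic})$ all vanish). The natural 1-PS to try is $\lambda$ with weights $(3,-1,-1,-1)$, which assigns large negative weight to monomials supported away from $x_0$. One computes $\mu(q,\lambda)=-2$ (the dominant monomial being $x_0x_3$, of weight $3-1=2$; but a suitable choice of cone direction gives $-2$) — more carefully I would pick the weights so the $x_0x_3$ term of $q$ has weight $-2$, e.g.\ weights $(-3,1,1,1)$, under which $q$ has weight $\le -2$ and $f$, being supported on monomials with $x_0$-exponent $a=0$, has every monomial of weight exactly $3$, giving $\mu(f,\lambda)\le 3$. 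Then $\mu^t=\mu(q,\lambda)+t\mu(f,\lambda)\le -2+3t$, which is negative for $t<\tfrac{2}{3}$ and equals $0$ at $t=\tfrac 23$; to get strict negativity at $t=\tfrac 23$ for multiplicity $>3$ I would instead exploit that multiplicity $\ge 4$ forces an additional vanishing, lowering $\mu(f,\lambda)$ below $3$.

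For the multiplicity-$>3$ (i.e.\ multiplicity $\ge 4$) statement, the residual analysis is the same but the vanishing is stronger: a point of multiplicity $\ge 4$ on a cubic forces $f$ to vanish to order $4$ at $p$, which is impossible for a nonzero cubic unless $f$ itself is supported entirely on monomials involving a common factor through $p$ — so the refined input is that the tangent-cone constraint pushes the minimal-weight representative to satisfy $\mu(f,\lambda)\le 2$ for the chosen 1-PS, yielding $\mu^t\le -2+2t<0$ for all $t\le \tfrac 23$. I would phrase this by noting that a cubic with a point of multiplicity $\ge 4$ cannot have an isolated such singularity, and combine this with the standard reduction to treat the contribution of $q$, which contributes $\mu(q,\lambda)\le -2$ independently of $f$.

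The main obstacle I anticipate is the bookkeeping of which coefficients $\alpha_{a,b,c,d}$ vanish under the multiplicity hypothesis and verifying that after adding a suitable multiple of $q$ (to exploit the freedom $f\mapsto f-\ell q$ for linear $\ell$, which does not change the curve but can lower the $\lambda$-weight) the representative $f$ genuinely achieves the claimed bound on $\mu(f,\lambda)$; this is exactly the subtlety flagged in Proposition \ref{computemu} about choosing the minimal-weight representative. Getting the borderline case $t=\tfrac 23$ to come out strictly negative for multiplicity $\ge 4$ while only non-positive (for stability) at multiplicity $=3$ is the delicate point, and I would resolve it by a careful choice of 1-PS weights adapted to the exact order of vanishing forced by the multiplicity, rather than reusing a single universal subgroup.
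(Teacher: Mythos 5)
There is a genuine gap here, and in fact the central computation is not only unjustified but false, so the argument cannot be repaired by adjusting constants. In your normalization ($p=(1,0,0,0)$ a smooth point of $q$, tangent plane $x_3=0$), every representative satisfies $q=x_0x_3+q'(x_1,x_2,x_3)$ with $q'\neq 0$, since $q'=0$ would make $q$ reducible, which your own reduction excludes. Because $\mu(q,\lambda)$ is the \emph{maximum} weight over the monomials of $q$, the weights $(-3,1,1,1)$ give every monomial of $q'$ weight $+2$, and the weights $(3,-1,-1,-1)$ give the monomial $x_0x_3$ weight $+2$; in either case $\mu(q,\lambda)=+2$, not $\leq -2$, so your bound $\mu^t\leq -2+3t$ is unobtainable. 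Worse, the statement this bound would prove --- that every curve with a triple point is numerically non-semi-stable for all $t<\frac{2}{3}$ --- is false: curves with $D_4$ singularities such as $V(x_0x_3-x_1x_2,\,x_1^3+x_2^3)$ lie on a smooth quadric (so $\mu(q,\lambda)\geq 0$ for \emph{every} $\lambda$, the discriminant being a nonvanishing invariant) and are strictly semi-stable with closed orbit throughout $(\frac{2}{9},\frac{2}{3})$ by Theorem \ref{mainthm2} and Remark \ref{minorbit2}. This is precisely why the proposition claims only non-\emph{stability} in the multiplicity-three case. The correct subgroup is the balanced one with weights $(1,0,0,-1)$: it gives $\mu(q,\lambda)=0$, and once one has a representative $f$ that is a cone with vertex $p$ (your observation that the freedom $f\mapsto f-\ell q$ must be exploited is correct, and this is what the paper achieves by projecting from $p$ after using Proposition \ref{LineComponent} to exclude lines of $C$ through $p$), all monomials of $f$ have weight in $[-3,0]$, so $\mu(f,\lambda)\leq 0$ and $\mu^t\leq 0$ for every $t$ --- non-stability and nothing stronger.

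The multiplicity-four half of your proposal rests on conflating the multiplicity of the curve with the order of vanishing of the cubic. Since $p$ is a smooth point of $q$, $\operatorname{mult}_p(C)$ equals the order of vanishing of $f|_Q$ at $p$, not of $f$: for instance $V(x_0x_3-x_1x_2,\,x_1x_2x_3)$ has a point of multiplicity four at $(1,0,0,0)$ even though every cubic representative vanishes there only to order three. So ``multiplicity $\geq 4$ is impossible for a nonzero cubic'' is not the relevant dichotomy, and the asserted bound $\mu(f,\lambda)\leq 2$ has no justification. What this case actually requires, and what is missing from your sketch, is a geometric step: if $\operatorname{mult}_p(C)\geq 4$ and $C$ contains no line through $p$, projection from $p$ maps $C$ onto a conic, so $C$ lies on the quadric cone over that conic with vertex $p$; by uniqueness of the quadric containing a $(2,3)$-complete intersection, $q$ is that cone, contradicting smoothness of $q$ at $p$ --- equivalently, $q$ is singular at $p$ and Proposition \ref{ConePoint} destabilizes, since a point of multiplicity four is neither a node nor a cusp. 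If $C$ does contain a line through $p$, the residual curve is singular there (indeed meets the line with multiplicity $\geq 3$) and Proposition \ref{LineComponent} destabilizes. This two-case analysis is the paper's proof, and no choice of weights in your framework substitutes for it.
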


\begin{proof}
Without loss of generality, we may assume that the singular point is $p = (1,0,0,0)$ and by Proposition \ref{LineComponent} we may assume that $C$ does not contain a line $L$ through $p$.

Let us first consider the case where $p$ is a triple point.  Because $C$ does not contain any lines $L$ such that $L \cap \overline{C \backslash L} = \{p\}$, projection from $p$ maps $C$ onto a cubic in $\PP^2$.  Hence, $C$ is contained in the cone over this cubic.  Consequently, this cone gives a representative $f$ for $[f]$, which we will fix for the computations that follow.
Suppose now that the tangent space to the quadric at $p$ is given by $x_3 = 0$.  Then consider the 1-PS with weights $(1,0,0,-1)$.  We see that both $\mu (q, \lambda ) \leq 0$ and $\mu (f, \lambda ) \leq 0$ and hence $C$ is not numerically $t$-stable for any $t$.

Now let us consider the case where $p$ has multiplicity $4$.
Projection from $p$ maps $C$ onto a conic in $\PP^2$.  Since $C$ is contained in the cone over this conic, it follows that $p$ is the singular point of a quadric cone containing $C$.  We have already seen, however, that unless $p$ is a node or cusp of $C$, then $C$ is not numerically $t$-semi-stable.
\end{proof}

We now consider three curves that are terminally semi-stable, but not Chow semi-stable.  We determine those values of $t$ at which they become numerically unstable.

\begin{proposition}
\label{A7Sing}
If $C$ contains a conic $C'$ that meets $\overline{C \backslash C'}$ in an $A_7$ singularity, it is numerically $t$-unstable for all $t > \frac{6}{11}$.  If $q$ is a quadric cone and $C$ has a cusp at the singular point of $q$, it is numerically $t$-unstable for all $t < \frac{6}{11}$.
\end{proposition}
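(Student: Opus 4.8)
The plan is to apply the numerical criterion exactly as in the preceding propositions: in each case I would choose coordinates adapted to the relevant singularity, translate the geometric hypothesis into vanishing conditions on the coefficients of a convenient representative of $f$ modulo $q$, exhibit a single destabilizing one-parameter subgroup $\lambda$, and then invoke Proposition \ref{computemu} together with the inequality $\mu^t(([q],[f]),\lambda)\le \mu(q,\lambda)+t\mu(f,\lambda)$, which holds for \emph{any} representative. Since the two assertions have opposite-sign thresholds at the same critical slope, I expect $\lambda$ to realize $\mu(q,\lambda)=6,\ \mu(f,\lambda)=-11$ in the first case and $\mu(q,\lambda)=-6,\ \mu(f,\lambda)=11$ in the second, so that $\mu^t\le 6-11t<0$ precisely for $t>\frac{6}{11}$ (resp.\ $\mu^t\le -6+11t<0$ for $t<\frac{6}{11}$).

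For the first statement I would place the plane of the conic $C'$ at $\{x_0=0\}$, so that $f\equiv x_0 G\pmod q$ for a quadric $G$, and write $q=x_0 L+(x_1x_3-x_2^2)$, where $x_1x_3-x_2^2$ is the rank-three conic cut out on $\{x_0=0\}$ and $L$ is linear; note this uniform normal form covers both the case where $Q$ is smooth and where it is a cone. After moving the $A_7$ point to $p=(0,1,0,0)$, the contact-of-order-four condition on $C'\cap C''$ should become $g_{11}=g_{12}=g_{23}=0$ and $g_{22}=-g_{13}$ (with $g_{33}\ne 0$), where $g_{k\ell}$ is the coefficient of $x_kx_\ell$ in $G$. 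The decisive point is that the one dangerous surviving combination, $g_{13}x_1x_3+g_{22}x_2^2=g_{13}(x_1x_3-x_2^2)$, is a multiple of $x_1x_3-x_2^2\equiv -x_0L\pmod q$, hence is absorbed into terms divisible by $x_0$. Thus for $\lambda$ with weights $(-9,7,3,-1)$ every monomial of $f=x_0G$ acquires weight $\le -11$, while the terms $x_1x_3$ and $x_2^2$ of $q$ have weight $6$ and all $x_0L$-terms have weight $\le -2$, giving $\mu(q,\lambda)=6$ and $\mu^t\le 6-11t<0$ for $t>\frac{6}{11}$.

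For the second statement I would take $q=x_1x_3-x_2^2$ with vertex $p=(1,0,0,0)$, reduce $f$ modulo $q$ to $x_2$-degree at most one, and use the automorphism of the cone to align the cuspidal tangent direction with the $x_1$-axis. The cusp ($A_2$) condition at the vertex then reads $\alpha_{3000}=\alpha_{2100}=\alpha_{2010}=0$ with $\alpha_{2001}\ne 0$ and $\alpha_{1200}\ne 0$ (expressing that the tangent cone is the double line along the $x_1$-axis and that the local branch is $(1,s^2,s^3,s^4)$ rather than a higher tangency). Under $\lambda=(9,1,-3,-7)$ these vanishing conditions remove exactly the monomials $x_0^3,\,x_0^2x_1,\,x_0^2x_2$, of weights $27,19,15$, leaving $\mu(f,\lambda)=11$ attained by $x_0^2x_3$ and $x_0x_1^2$, while $\mu(q,\lambda)=\max(w_1+w_3,2w_2)=-6$; hence $\mu^t\le -6+11t<0$ for $t<\frac{6}{11}$.

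The main obstacle is precisely the bookkeeping in the middle step: one must arrange the coordinates so that the singularity hypotheses force the vanishing of \emph{exactly} the monomials of $\lambda$-weight strictly above the critical value, and of no others. In the first case this rests on the observation that the only contact-four coefficients not killed individually, $g_{13}$ and $g_{22}$, enter $G$ solely through $x_1x_3-x_2^2\equiv -x_0L$, so that after reduction modulo $q$ the whole cubic lands in weight $\le -11$; dually, in the second case one must check that the cusp conditions cap the weight at $11$. Verifying these two claims — and thereby pinning down the sharp slope $\frac{6}{11}$ — is the delicate part, whereas the existence of the configurations and the final sign computations are routine.
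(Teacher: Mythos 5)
Your proposal is correct and takes essentially the same approach as the paper: the same destabilizing one-parameter subgroups (the paper uses weights $(7,3,-1,-9)$ and $(9,1,-3,-7)$, differing from yours only by a relabeling of coordinates), the same normal forms, and the same bounds $\mu^t \le 6-11t$ and $\mu^t \le 11t-6$. Your explicit contact-order-four bookkeeping, together with the absorption of $g_{13}(x_1x_3-x_2^2)$ into multiples of $q$, simply spells out the reduction that the paper leaves implicit when it asserts $\mu(f,\lambda)\le -11$ in the conic case.
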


\begin{proof}
First, consider the case where $C$ contains a conic $C'$ meeting the residual curve in an $A_7$ singularity.  Without loss of generality, we assume that the conic is contained in the plane $x_3 = 0$, the singularity occurs at the point $(1,0,0,0)$, and the quadric $\frac{f}{x_3}$ contains the line $x_2 = x_3 = 0$.  By assumption, the tangent space to $q$ at this point contains this line, and the quadric $\frac{f}{x_3}$ is singular.  Now, consider the 1-PS with weights $(7,3,-1,-9)$.  Then $\mu (q , \lambda ) \leq 6$ and $\mu (f, \lambda ) \leq -11$.  It follows that
$$ \mu^t ((q,f), \lambda ) \leq -11t+6 $$
which is negative when $t > \frac{6}{11}$.

Now, consider the case where $q$ is a quadric cone and $C$ has a cusp at the singular point of $q$.  Without loss of generality, we may assume that $q = x_1 x_3 - x_2^2$.  We write a representative for the cubic in coordinates as
$$ f = \sum_{a+b+c+d=3} \alpha_{a,b,c,d} x_0^a x_1^b x_2^c x_3^d .$$
As above, we may assume that the tangent space to the cubic at the cone point of $q$ is the plane $x_3 = 0$.  It follows that $\alpha_{3,0,0,0} = \alpha_{2,1,0,0} = \alpha_{2,0,1,0} = 0$.  Consider the 1-PS with weights $(9,1,-3,-7)$.  Then $\mu (q , \lambda ) = -6$ and $\mu (f , \lambda ) \leq 11$.  It follows that
$$ \mu^t (([q],[f]), \lambda ) \leq 11t-6 $$
which is negative when $t < \frac{6}{11}$.
\end{proof}

\begin{remark}
We will see in Theorem \ref{mainthm2} (3) that the minimal orbit of the above strictly semi-stable curves at $t=\frac{6}{11}$ is given by
$$x_1^2+x_0x_2 = x_2^2x_3+x_0x_3^2 = 0 .$$
This curve consists of two components meeting in a separating $A_7$ singularity.  One of the components is a conic.  The other is a quartic with a cusp at the vertex of the cone.
\end{remark}

\begin{proposition}
\label{DoubleConic}
If $C$ contains a double conic component, it is numerically $t$-unstable for all $t > \frac{2}{5}$.  If $q$ is a quadric cone and $f$ passes through the singular point of the cone, then it is is numerically $t$-unstable for all $t < \frac{2}{5}$.
\end{proposition}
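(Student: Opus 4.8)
The plan is to mirror the two-sided strategy of Proposition~\ref{A7Sing}: for each of the two claims I would exhibit a single destabilizing one-parameter subgroup, chosen so that the sign of $\mu^t$ changes exactly at the critical slope $t=\frac{2}{5}$. In both cases the mechanism is to bound $\mu^t((q,f),\lambda)$ from above by $\mu(q,\lambda)+t\,\mu(f,\lambda)$ for a well-chosen representative $f$, as permitted by Proposition~\ref{computemu} and the observation preceding Proposition~\ref{NoReducibleQuadric}.

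For the first claim, I would place the plane of the double conic at $x_3=0$, so that the conic is $D=V(q,x_3)$. Since $C=V(q,f)$ contains $2D$, restricting to the surface $Q=V(q)$ shows that $f|_Q$ vanishes to order two along $D=V(x_3|_Q)$; hence $f\equiv x_3^2 m \pmod q$ for some linear form $m$, and $x_3^2 m$ is a legitimate representative of $[f]$. I would then test the $1$-PS $\lambda$ with weights $(1,1,1,-3)$. Every monomial of $x_3^2 m$ has weight $-6+w_i\le -5$, so $\mu(f,\lambda)\le -5$; and since $D$ is a genuine conic we have $q|_{x_3=0}\ne 0$, so $q$ carries a weight-$2$ monomial in $x_0,x_1,x_2$ and $\mu(q,\lambda)=2$. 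This yields $\mu^t\le 2-5t$, which is negative precisely when $t>\frac{2}{5}$. (The degenerate possibility $x_3\mid q$ is excluded because a reducible $q$ is already unstable by Proposition~\ref{NoReducibleQuadric}.)

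For the second claim, I would normalize the cone as $q=x_1x_3-x_2^2$ with vertex $p=(1,0,0,0)$; the hypothesis that $f$ passes through $p$ forces $\alpha_{3,0,0,0}=0$, i.e.\ $f$ has no $x_0^3$ term. Here I would use the ``opposite'' $1$-PS, with weights $(3,-1,-1,-1)$. Then $\mu(q,\lambda)=-2$, while the absence of $x_0^3$ (weight $9$) means the heaviest surviving monomials of $f$ are the $x_0^2x_i$ with $i\ge 1$, of weight $5$, so $\mu(f,\lambda)\le 5$. Thus $\mu^t\le -2+5t$, negative precisely when $t<\frac{2}{5}$, as desired.

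The routine part is the monomial weight bookkeeping; the only genuine choices are the two $1$-PS's. The step I expect to require the most care is verifying the bound on $\mu(f,\lambda)$ using only the hypotheses actually available. In the second claim I am invoking nothing beyond $f(p)=0$, so I must confirm that no monomial other than $x_0^3$ exceeds weight $5$; this is exactly why the balanced weights $(3,-1,-1,-1)$ are needed rather than, say, $(3,1,-1,-3)$, for which an $x_0^2x_1$ term of weight $7$ would push the critical value down to $\frac{2}{7}$ and fail to reach the wall. In the first claim I should likewise check that replacing $f$ by the representative $x_3^2 m$ cannot inadvertently raise $\mu(q,\lambda)$ — and it cannot, since $\mu(q,\lambda)$ depends only on $q$ and $\lambda$, not on the chosen representative of $[f]$.
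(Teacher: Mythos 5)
Your proof is correct and takes essentially the same route as the paper: the paper destabilizes the double conic with the $1$-PS of weights $(-3,1,1,1)$ (your $(1,1,1,-3)$ after relabeling coordinates), using $\mu(q,\lambda)\le 2$ and $\mu(f,\lambda)\le -5$ for the representative divisible by the square of the plane's equation, and destabilizes curves through the cone point with exactly your $1$-PS $(3,-1,-1,-1)$, using $\mu(q,\lambda)=-2$ and $\mu(f,\lambda)\le 5$. The extra bookkeeping you supply (the representative $x_3^2m$, the exclusion of a reducible $q$ via Proposition \ref{NoReducibleQuadric}) is sound and is simply left implicit in the paper's version.
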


\begin{proof}
First, consider the case where $C$ contains a double conic component.  Without loss of generality, we may assume that the conic is contained in the plane $x_0 = 0$.  Consider the 1-PS with weights $(-3,1,1,1)$.  Then $\mu (q , \lambda ) \leq 2$, and, since $f$ is divisible by $x_0^2$, we have $\mu (f, \lambda ) \leq -5$.  It follows that
$$ \mu^t ((q,f), \lambda ) \leq -5t+2 $$
which is negative when $t > \frac{2}{5}$.

Now, consider the case where $q$ is a quadric cone and $f$ passes through the singular point.  Without loss of generality, we may assume that the singularity occurs at the point $p=(1,0,0,0)$.  Consider the 1-PS with weights $(3,-1,-1,-1)$.  Then, since $q$ is singular at $p$, $\mu (q, \lambda ) = -2$.  Furthermore, since $f$ contains $p$, $\mu (f, \lambda ) \leq 5$, so
$$ \mu^t ((q,f), \lambda ) \leq 5t-2 $$
which is negative when $t < \frac{2}{5}$.
\end{proof}

\begin{remark}
We will see in Theorem \ref{mainthm2} (4) that here, the relevant minimal orbit of strictly semi-stable curves is given by the union of two rulings of a quadric cone and a double conic:
$$ q(x_0,x_1,x_2) = x_0x_3^2 = 0 .$$
\end{remark}

\begin{proposition}
\label{TripleConic}
If $C$ is a triple conic, then it is numerically $t$-unstable for all $t > \frac{2}{9}$.  If $q$ is singular, then $(q,f)$ is numerically $t$-unstable for all $t < \frac{2}{9}$.
\end{proposition}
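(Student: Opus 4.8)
The plan is to apply the Hilbert--Mumford numerical criterion in exactly the style of the preceding propositions: for each of the two assertions I will exhibit a single destabilizing one-parameter subgroup $\lambda$ together with a convenient representative of $f$, and then use the inequality $\mu^t(([q],[f]),\lambda) \le \mu(q,\lambda) + t\mu(f,\lambda)$, which holds for \emph{any} representative of $[f]$ by Proposition \ref{computemu}. Since only an upper bound on the index is needed to certify instability, I will not bother to compute the minimal-weight representative of $[f]$; any representative whose $\lambda$-weight I can control suffices.

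For the first assertion, I would first reduce a triple conic to the normal form recorded in Remark \ref{minorbit2}, namely $q = x_0x_3 - x_1x_2$ and $f = x_3^3$ (the cube of the linear form cutting out the plane of the conic). I would then test the one-parameter subgroup with weights $(1,1,1,-3)$, the analogue of the subgroup used in Proposition \ref{DoubleConic} but with the distinguished coordinate taken to be $x_3$. A direct weight count gives $\mu(q,\lambda) = 2$; in fact every quadratic monomial has $\lambda$-weight at most $2$, so this bound holds for any quadric and the precise shape of $q$ is immaterial. Meanwhile $\mu(f,\lambda) = 3(-3) = -9$. Hence $\mu^t \le 2 - 9t$, which is negative precisely when $t > \tfrac{2}{9}$.

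For the second assertion, I would place the singular point of $q$ at $(1,0,0,0)$ by an element of $\SL(4)$, so that $q$ becomes a quadratic form in $x_1, x_2, x_3$ alone. Testing the one-parameter subgroup with weights $(3,-1,-1,-1)$ (the same subgroup that appears in Proposition \ref{DoubleConic} and in the simultaneously-singular case), every monomial of $q$ has weight $-2$, so $\mu(q,\lambda) = -2$, while an arbitrary cubic satisfies $\mu(f,\lambda) \le 9$, the bound being attained by $x_0^3$. Thus $\mu^t \le -2 + 9t$, which is negative precisely when $t < \tfrac{2}{9}$.

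The computations are routine, and I expect no serious obstacle. The only points requiring a word of care are the reduction to normal form in the first assertion — justifying that, after fixing coordinates so the supporting plane of the conic is $x_3 = 0$, the triple conic is cut out on the quadric by the perfect cube $f = x_3^3$ — and the observation that for instability it is enough to bound the index from above using a fixed representative rather than the minimal-weight one. The argument is entirely parallel to Propositions \ref{A7Sing} and \ref{DoubleConic}, and the critical slope $\tfrac{2}{9}$ dovetails with them, with the triple conic playing the role of the strictly semi-stable orbit at $t = \tfrac{2}{9}$.
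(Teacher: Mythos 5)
Your proposal is correct and follows essentially the same route as the paper: the paper destabilizes the triple conic with the representative $f = x_0^3$ and the 1-PS of weights $(-3,1,1,1)$ (your choice of $x_3$ and weights $(1,1,1,-3)$ is the same computation after relabeling coordinates), and destabilizes curves on singular quadrics with exactly the 1-PS $(3,-1,-1,-1)$ you use, in both cases invoking the same upper bound $\mu^t \le \mu(q,\lambda) + t\mu(f,\lambda)$ valid for any representative of $[f]$. The only cosmetic difference is that the paper does not pass through the normal form of Remark \ref{minorbit2} for the quadric, since, as you yourself note, the bound $\mu(q,\lambda)\le 2$ makes the shape of $q$ irrelevant in the first assertion.
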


\begin{proof}
First, consider the case where $C$ is a triple conic.  Without loss of generality, we may assume that $f = x_0^3$.  Consider the 1-PS with weights $(-3,1,1,1)$.  Then $\mu (q, \lambda ) \leq 2$ and $\mu (f, \lambda ) = -9$.  Hence
$$ \mu^t ((q,f), \lambda ) \leq -9t+2 $$
which is negative when $t > \frac{2}{9}$.

Now, consider the case where $q$ is singular.  Without loss of generality, we may assume that the singular point is the point $p=(1,0,0,0)$.  Consider the 1-PS with weights $(3,-1,-1,-1)$.  Then, since $q$ is singular at $p$, we have $\mu (q, \lambda ) = -2$ and $\mu (f, \lambda ) \leq 9$, so
$$ \mu^t ((q,f), \lambda ) \leq 9t-2 $$
which is negative when $t < \frac{2}{9}$.
\end{proof}

We now change directions, and establish stability in some cases.  First, we recall a basic result from GIT.

\begin{lemma}\label{lemGIT}
Let $X$ be a scheme (of finite type over an algebraically closed field $k$) and let $G$ be a reductive algebraic group (over $k$) acting on $X$.  Suppose $L$ is a $G$-linearized line bundle on $X$.   There is a natural  induced action of $G$ on $X^{ss}$ and an induced linearization on $L|_{X^{ss}}$ so that there is an isomorphism of categorical quotients $
X/\!\!/_L G\cong X^{ss}/\!\!/_{L|_{X^{ss}}}G$.
Moreover, if $X$ is complete, $L$ is ample, and  $X^{ss}\ne \emptyset$, then  there exists $m,n_0\in \mathbb N$ such that for all $n\ge n_0$, $
H^0(X^{ss}, (L_{ss}^{\otimes m})^{\otimes n})^G=H^0(X,(L^{\otimes m})^{\otimes n})^G$.
\end{lemma}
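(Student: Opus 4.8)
The plan is to treat the two assertions of the lemma separately: the first is essentially formal, following from the definition of the quotient, while the second requires the ample-case GIT machinery of Mumford.

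For the first assertion, recall from Notation \ref{GITnotation} that $X\gquot_L G$ is \emph{by definition} the categorical quotient of the Mumford-semistable locus $X^{ss}$. Thus the statement reduces to the claim that the semistable locus of the pair $(X^{ss},L|_{X^{ss}})$ is all of $X^{ss}$, i.e.\ $(X^{ss})^{ss}=X^{ss}$. I would prove this directly from the definition of (Mumford) semistability: given $x\in X^{ss}$, choose an invariant section $\sigma\in H^0(X,L^{\otimes n})^G$ with $\sigma(x)\neq 0$ and $X_\sigma$ affine. Since every non-vanishing point of such a $\sigma$ is semistable, one has $X_\sigma\subseteq X^{ss}$; hence $\sigma|_{X^{ss}}$ is an invariant section of $L^{\otimes n}|_{X^{ss}}$ which is non-zero at $x$ and whose non-vanishing locus $(X^{ss})_{\sigma|_{X^{ss}}}=X_\sigma$ is affine. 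Therefore $x$ is semistable for $(X^{ss},L|_{X^{ss}})$, and since the reverse inclusion is trivial this gives $(X^{ss})^{ss}=X^{ss}$, and hence the isomorphism of categorical quotients.

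For the second assertion I would pass through the quotient scheme. Assume $X$ complete, $L$ ample, and $X^{ss}\neq\emptyset$, and set $R^G=\bigoplus_{k\ge 0}H^0(X,L^{\otimes k})^G$, which is finitely generated by Nagata's theorem. By Mumford's construction in the ample case (\cite[Thm.~1.10]{GIT}), the good quotient $\pi\colon X^{ss}\to Y:=X\gquot_L G$ exists with $Y\cong\Proj R^G$ projective, with $(\pi_*\OO_{X^{ss}})^G=\OO_Y$, and with the linearization descending: there is an $m$, divisible enough that $\OO_Y(m)$ is an honest ample line bundle on $\Proj R^G$, such that $\pi^*\OO_Y(m)\cong (L^{\otimes m})|_{X^{ss}}$ as $G$-linearized bundles. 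Then, using the projection formula together with $(\pi_*\OO_{X^{ss}})^G=\OO_Y$, I would identify $H^0(X^{ss},(L_{ss}^{\otimes m})^{\otimes n})^G\cong H^0(Y,\OO_Y(mn))$ for every $n$. Separately, the standard comparison between a finitely generated graded ring and the twisting sheaves on its $\Proj$ (applied to the chosen Veronese $\bigoplus_n (R^G)_{mn}$) gives $H^0(Y,\OO_Y(mn))=(R^G)_{mn}=H^0(X,(L^{\otimes m})^{\otimes n})^G$ for all $n\ge n_0$. Concatenating the two identifications, and checking they are compatible with the natural restriction map $H^0(X,\cdot)^G\to H^0(X^{ss},\cdot)^G$, yields the asserted equality.

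The formal first part is routine. The real content, and the step I expect to require the most care, is the descent statement $\pi^*\OO_Y(m)\cong (L^{\otimes m})|_{X^{ss}}$ together with the identity $(\pi_*\OO_{X^{ss}})^G=\OO_Y$: this is where one must genuinely use that $L$ is ample and $X$ complete, and it is the only place where passing to a single fixed power $m$ (rather than all powers) is essential, reflecting that $R^G$ need not be generated in degree one. The remaining ingredient, that $H^0(\Proj R^G,\OO(mn))$ recovers $(R^G)_{mn}$ only for $n\gg 0$, is precisely what forces the constant $n_0$ into the statement; I would quote it from the standard theory of $\Proj$ of a finitely generated graded algebra rather than reprove it.
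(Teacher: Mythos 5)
Your proof is correct and follows essentially the same route as the paper's: the first part by restricting the invariant section witnessing semistability (its non-vanishing locus stays affine, so the semistable loci agree and the categorical quotients coincide), and the second part via the good quotient $\pi\colon X^{ss}\to \Proj R(X,L)^G$, descent of a power of $L$ to $\mathscr O(1)$, and the comparison of graded pieces with sections on $\Proj$ for $n\gg 0$. The only difference is that you spell out details the paper leaves implicit (Nagata finiteness, the projection formula with $(\pi_*\OO_{X^{ss}})^G=\OO_Y$, and the role of $m$ in making the Veronese subring generated in degree one).
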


\begin{proof}  First we consider the GIT quotients $X/\!\!/_L G$ and $X^{ss}/\!\!/_{L|_{X^{ss}}}G$.  If $X^{ss}=\emptyset$, then the statement of the lemma is vacuous, so we may assume $X^{ss}\ne \emptyset$.
Then the  injective restriction maps
$
H^0(X,L^{\otimes n})^G\to H^0(X^{ss},L|_{X^{ss}}^{\otimes n})^G
$
make it clear that any $x\in X^{ss}$ is semi-stable for the $G$-linearization of $L|_{X^{ss}}$.  Thus the semi-stable loci agree.  Since the $G$-action on $X^{ss}$ is induced from that on $X$, one concludes there is an isomorphism  $X/\!\!/_L G\cong X^{ss}/\!\!/_{L|_{X^{ss}}}G$ of the  categorical quotients.

Now let us consider the spaces of global sections
$$H^0(X^{ss}, (L_{ss}^{\otimes m})^{\otimes n})^G \ \ \text{ and } \ \ H^0(X,(L^{\otimes m})^{\otimes n})^G.$$
We are now assuming that $X$ is complete, $L$ is ample and $X^{ss}\ne \emptyset$.  First, note that there is a surjection
$
\pi:X^{ss}\to  X\gquot_L G$.
Moreover, we have a line bundle $\mathscr O(1)$ on $X\gquot_L G$ such that (up to rescaling $L$) we have $\pi^*\mathscr O(1)=L_{ss}$.  By construction of $\operatorname{Proj}$, and using the assumption that $X$ is complete and $L$ is ample, so that $X\gquot_L G=\operatorname{Proj}R(X,L)^G$, we get $H^0 ( X\gquot_L G, \mathscr O(n))=H^0( X,  L^{\otimes n})^G$
 (for $n\gg 0$).    Finally, by construction,
 $
 H^0(X^{ss},L_{ss}^{\otimes n})^G=H^0(X\gquot_L G, \mathscr O(n))$  completing the proof.
\end{proof}

We use this lemma in the following.

\begin{lemma}
\label{TerminalStability}
If $0<t < \frac{2}{9}$, then $(q,f)$ is $t$-(semi)stable if and only if $q$ is smooth and $f \vert_q$ is terminally (semi)stable.
\end{lemma}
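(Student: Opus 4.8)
<br>

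The plan is to prove the equivalence by relating $t$-stability on $\mathbb{PE}$ for very small $t>0$ to the terminal stability analyzed by Fedorchuk (Theorem~\ref{thmmaksym}). The key structural observation is that for $0<t<\frac{2}{9}$, the linearization $\eta+th$ is ample (since $t<\frac{1}{2}$ is within the nef cone by Proposition~\ref{propnef}, and in fact in the ample interior), so numerical stability and Mumford stability coincide by the Hilbert--Mumford criterion; this lets us work purely with the numerical index $\mu^t$. First I would dispose of the ``only if'' direction using the destabilization results already established: by Propositions~\ref{NoReducibleQuadric}, the quadric $q$ must have rank at least $3$, and by Propositions~\ref{TripleConic} and \ref{DoubleConic} (the ``$q$ singular destabilizes for $t<\frac{2}{9}$, resp.~$t<\frac{2}{5}$'' halves), a singular quadric destabilizes for all $t<\frac{2}{9}$. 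Hence any $t$-semistable $(q,f)$ with $t<\frac{2}{9}$ must have $q$ smooth, so that $Q\cong\mathbb{P}^1\times\mathbb{P}^1$ and $C=V(q,f)$ is a $(3,3)$-curve $f|_q$ on $Q$.

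The heart of the argument is then to show that, once $q$ is smooth, numerical $t$-stability of $(q,f)\in\mathbb{PE}$ for small $t$ is equivalent to terminal stability of $f|_q$. The mechanism is a comparison of one-parameter subgroups: the stabilizer $\SO(4)\subset\SL(4)$ of the smooth quadric $q$ acts on $Q$, and its $1$-PS's are precisely the $1$-PS's of $\SL(4)$ fixing $[q]$, i.e.~those $\lambda$ with $\mu(q,\lambda)=0$. For such $\lambda$, Proposition~\ref{computemu} gives $\mu^t((q,f),\lambda)=t\,\mu(f,\lambda)$, and $\mu(f,\lambda)$ is (after the identification of $f|_q$ with a $(3,3)$-form) exactly the Hilbert--Mumford index governing terminal stability. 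Thus for $\SO(4)$-type $1$-PS's the two stability conditions match on the nose. The remaining point is to control the $1$-PS's $\lambda$ that do \emph{not} fix $[q]$, i.e.~with $\mu(q,\lambda)\neq 0$: here one shows that taking $t$ sufficiently small makes the quadric term $\mu(q,\lambda)\ge -2$ dominate, so that $\mu^t((q,f),\lambda)=\mu(q,\lambda)+t\mu(f,\lambda)$ cannot be made negative (for the semistable/positive range) by such $\lambda$ before it is made negative by an $\SO(4)$-type $\lambda$; the threshold $t=\frac{2}{9}$ is precisely the value (coming from Proposition~\ref{TripleConic}) below which destabilizing a smooth quadric via $\lambda$ with $\mu(q,\lambda)<0$ becomes impossible.

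Concretely, I would organize this as follows. One reduces, via a standard argument (the Iwahori/Cartan decomposition, or simply diagonalizing $\lambda$ and normalizing $q$), to checking the numerical criterion on $1$-PS's adapted to a fixed basis in which $q$ is in normal form $x_0x_3-x_1x_2$ (or $x_0x_1+x_2x_3$). For each such $\lambda$ with weight vector summing to zero, one splits into the two cases $\mu(q,\lambda)=0$ and $\mu(q,\lambda)<0$ (noting $\mu(q,\lambda)\in\{-2,-1,0,\dots\}$ cannot be handled uniformly without care). In the first case the equivalence with terminal stability is immediate from the identity $\mu^t=t\mu(f,\lambda)$. In the second case, using $\mu(q,\lambda)\le -1$ together with the a priori bound $\mu(f,\lambda)\le 9$ (the maximal weight of a cubic, attained only by $x_0^3$-type monomials corresponding to curves already excluded as triple conics), one checks $\mu^t=\mu(q,\lambda)+t\mu(f,\lambda)\ge -1+9t$, wait—one must be more careful—the correct estimate is that for $t<\frac{2}{9}$ one has $\mu(q,\lambda)+t\,\mu(f,\lambda)>0$ whenever $\mu(q,\lambda)<0$, because $\mu(q,\lambda)\le -2$ forces $f$ to be supported away from the monomials that could rescue positivity unless $C$ is genuinely destabilized, which is exactly the content already extracted from Propositions~\ref{TripleConic}--\ref{DoubleConic}.

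The main obstacle, and the step requiring genuine care, is the second case above: controlling $1$-PS's that do \emph{not} preserve $q$. The subtlety is that $\mu(f,\lambda)$ can be large and positive, so the sign of $\mu^t$ is a delicate balance between the $q$-term and the $f$-term, and one must verify that the window $0<t<\frac{2}{9}$ is exactly the range in which no such $\lambda$ can be a \emph{first} destabilizer. I expect the cleanest route is to invoke the boundary computations of Propositions~\ref{TripleConic} and \ref{DoubleConic}: these identify $t=\frac{2}{9}$ as the precise critical slope at which a smooth quadric first becomes destabilizable, and dually at which a singular quadric first becomes stabilizable. Below this slope, combining with Fedorchuk's complete list of terminally (semi)stable $(3,3)$-curves (Theorem~\ref{thmmaksym}) and matching the excluded configurations (double lines, line components meeting the residual curve in a singular point) against the destabilizing $1$-PS's of Propositions~\ref{LineComponent} and \ref{TriplePoints}, yields the desired bijection between $t$-stable points and terminally stable $(3,3)$-curves, completing the proof.
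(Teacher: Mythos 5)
Your reduction to the case of a smooth quadric (via the singular-quadric half of Proposition~\ref{TripleConic}) and your observation that ampleness of $\eta+th$ for $t<\frac{1}{2}$ makes numerical and Mumford stability agree are both correct, and match the paper. But the heart of your argument --- the treatment of one-parameter subgroups that do \emph{not} fix $[q]$ --- contains a genuine gap, and the estimates you sketch around it are wrong. First, two factual errors: for a \emph{smooth} quadric one always has $\mu(q,\lambda)\ge 0$ (a nondegenerate symmetric matrix has a nonzero generalized diagonal, so monomials $x_ix_{\sigma(i)}$ of average weight $0$ all appear in $q$), so the case ``$\mu(q,\lambda)<0$'' you analyze is empty and the inequality ``$\mu(q,\lambda)\ge -2$'' is not the relevant issue; and $\mu(q,\lambda)=0$ does \emph{not} characterize the $1$-PS's fixing $[q]$ (take $q=x_0x_3+x_1^2+x_2^2+x_3^2$ and $\lambda$ with weights $(1,0,0,-1)$). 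More seriously, the step you delegate to Propositions~\ref{TripleConic} and~\ref{DoubleConic} is not contained in them: those propositions destabilize specific loci, and say nothing about why a pair $(q,f)$ with $q$ smooth and $f\vert_q$ terminally semi-stable cannot be destabilized, for $t<\frac{2}{9}$, by a $\lambda$ moving $q$. Indeed such destabilizations \emph{do} occur below $\frac{2}{9}$: take $q=x_0x_3+x_1x_2$, $\lambda$ with weights $(5,1,1,-7)$, and any $f\equiv x_3^2\ell \pmod{qV_1}$ with $\ell\in\langle x_1,x_2,x_3\rangle$; then $\mu(q,\lambda)=2$, $\mu(f,\lambda)\le -13$, so by Proposition~\ref{computemu} $\mu^t((q,f),\lambda)\le 2-13t<0$ for all $t>\frac{2}{13}$. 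The lemma survives because every such $f\vert_q$ contains a double ruling (the plane $x_3=0$ is tangent to $q$) and is therefore terminally unstable by Theorem~\ref{thmmaksym}; but proving that \emph{every} pair destabilized by a non-stabilizing $\lambda$ is also terminally unstable --- i.e.\ that non-stabilizing $1$-PS's are never ``first destabilizers'' on the smooth-quadric locus --- is precisely the hard content, and your proposal asserts it rather than proves it.

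This difficulty is exactly what the paper's proof is designed to bypass: instead of comparing $1$-PS's, it identifies the open set $V\subset \bP E$ of smooth-quadric pairs with the associated bundle $\SL(4)\times_{\SO(4)}\bP E_Q$ over a fixed smooth quadric $Q$, uses the identification of invariant rings \eqref{invring} to match $\SL(4)$-invariants on $V$ with $\SO(4)$-invariants on $\bP E_Q\cong \left|\mathscr O_{\mathbb P^1\times\mathbb P^1}(3,3)\right|$, and then uses $\bP E^{ss}\subseteq V$ (for $t<\frac{2}{9}$) together with Lemma~\ref{lemGIT} to conclude that the invariant rings, the (semi)stable loci, and the quotients all coincide with those of Fedorchuk's problem --- no $1$-PS bookkeeping at all. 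To salvage your numerical approach you would need to supply the missing comparison (a complete analysis of destabilizing $1$-PS's that move $q$, matched against terminal instability); absent that, the descent argument is the efficient route.
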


\begin{proof}
Note that, when $0 < t < \frac{2}{9}$, the line bundle $\eta + th$ is ample, so in this case numerical (semi-)stability is the same as actual (semi-)stability.  Let $Q$ be the smooth quadric defined by $x_0^2 + x_1^2 + x_2^2 + x_3^2 = 0$ and write $i: \bP E_Q \hookrightarrow \bP E$ for the inclusion of the fiber of $\bP E$ over $Q$.  Write $G = \SL(4)$ and $G' = \SO(4)$ for the stabilizer of $Q$.  Consider the quasi-projective variety $\widetilde{\bP E_Q} = G \times_{G'} \bP E_Q$, which is the quotient of $G \times \bP E_Q$ by the free action of $G'$:  $h(g,x) = (gh^{-1} , hx)$ for $h \in G'$.  There is a natural identification of the ring of invariants (cf.~\cite[p.10 Eq.~(3)]{kirwannr}):
\begin{equation}\label{invring}
R':=\bigoplus_{n\ge0} H^0(\bP E_Q , n i^* (\eta + th))^{G'} \cong \bigoplus_{n\ge0} H^0(\widetilde{\bP E_Q }, n(\eta +th))^{G}.
\end{equation}
Notice that $\bP E_Q$ has Picard rank 1, so $i^* (\eta +th) = \mathcal{O} (d)$ for some $d \geq 0$.

Now, observe that $\widetilde{ \bP E_Q }$ is isomorphic to the open set $V \subset \bP E$ parameterizing pairs $(q,f)$ where $q$ is smooth.  To see this, note that $G \times \bP E_Q$ admits a $G'$-invariant map to this space sending $(g,f)$ to $(g \cdot Q , g \cdot f)$.  This map induces an isomorphism on the quotient because the quadric $q$ is uniquely determined by an element of $G/G'$.

Finally, note that when $t < \frac{2}{9}$, every numerically $t$-semi-stable point lies on a smooth quadric.  From the computations above it follows that $\mathbb PE^{ss}\subseteq V$.  Thus, by virtue of  Lemma \ref{lemGIT},
$$ H^0 ( \bP E , n(\eta + th))^G \cong H^0 ( V , n(\eta + th))^G $$
for these values of $t$.
Hence
$$ \bP E \gquot_t G = \Proj \bigoplus_{n \geq 0} H^0 ( \bP E , n(\eta + th) )^G = \Proj \bigoplus_{n \geq 0} H^0 ( V , n(\eta + th) )^G $$
$$ = \Proj \bigoplus_{n \geq 0} H^0 ( \bP E_Q , \mathcal{O} (n) )^{G'} = \bP E_Q \gquot G' .$$

\end{proof}

%%%%%%%%%%%%%%%%%%%%%%%%%%%%%%%%%%%%%%
% s5
\section{Quotients of the Hilbert Scheme}\label{chowhilb}
 A standard approach to constructing birational models of $\overline{M}_g$ is to consider the pluricanonical image of a curve as a point in a Chow variety or Hilbert scheme.  One can then construct the GIT quotient of this Chow variety or Hilbert scheme by the group of automorphisms of the ambient projective space.  This approach can be found, for example, in both Mumford's and Gieseker's constructions of $\overline{M}_g$ as an irreducible projective variety (see \cite{mumford}, \cite{Gieseker}).  It is also the method by which Schubert \cite{Schubert} constructed the moduli space of pseudostable curves $\overline{M}_g^{ps}$, and   Hassett and Hyeon \cite{hh2}  constructed the first flip in the Hassett--Keel program.  In our situation, we will consider the GIT quotients $\operatorname{Hilb}_{4,1}^m\gquot_{\Lambda_m}\operatorname{SL}(4)$.  Recall that points of $\operatorname{Hilb}_{4,1}^m$ are called $m$-th Hilbert points.

\subsection{Numerical criterion for finite Hilbert stability}
A criterion for stability of Hilbert points was worked out in \cite{hhl}.  We briefly review their results.

Let $X \subset \PP^N$ be a variety with Hilbert polynomial $P(m)$.  We will write $k_m = \binom{N+m}{m} - P(m)$. For any $v \in \mathbb{R}^{N+1}$, we define an ordering $<_v$ on the set of monomials in $N+1$ variables as follows:

$x^a <_v x^b$ if
\begin{enumerate}
\item  $\deg x^a < \deg x^b$;
\item  $\deg x^a = \deg x^b$ and $v.a < v.b$;
\item  $\deg x^a = \deg x^b$, $v.a = v.b$, and $x^a <_{Lex} x^b$ in the lexicographic order.
\end{enumerate}
In particular, given a 1-PS $\lambda$ with weight vector $\alpha = (\alpha_0 , \alpha_1 , \ldots , \alpha_N )$, the monomial order $<_{\lambda}$ is the lexicographic order associated to the weight $\alpha$.  For each polynomial $f$, let $in_{<_{\lambda}} (f)$ denote the largest term of $f$ with respect to $<_{\lambda}$.  For an ideal $I$, we define $in_{<_{\lambda}} (I) = \langle in_{<_{\lambda}} (f) \vert f \in I \rangle$.

\begin{proposition}[\cite{hhl}]
\label{NumericalCriterion}
A point $I \in \Hilb^m_{4,1} \subset \mathbb G(k_m,n_m)$ is semi-stable if and only if, for every 1-PS $\lambda$, we have
$$ \sum_{x^a \in in_{<_{\lambda}} (I)} wt_{\lambda} (x^a) \geq 0 $$
where the left-hand sum is over the monomials $x^a$ of degree $m$ in $in_{<_{\lambda}} (I)$.
\end{proposition}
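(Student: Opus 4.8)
The plan is to read off the statement as a direct application of the Hilbert--Mumford numerical criterion to the Pl\"ucker-embedded Grassmannian $\mathbb G(k_m,n_m)$, reducing the whole question to the computation of a single extremal weight, which I will show is governed by the initial monomials $in_{<_\lambda}(I)_m$.

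First I would fix a $1$-PS $\lambda$ and diagonalize its action on $V_m:=H^0(\mathbb P^3,\mathcal O(m))$, so that the degree-$m$ monomials $x^a$ form a basis of eigenvectors of $\lambda$-weight $wt_\lambda(x^a)$. The eigenvectors of $\lambda$ on $\wedge^{k_m}V_m$ are then the wedges $x^{a^{(1)}}\wedge\cdots\wedge x^{a^{(k_m)}}$ indexed by size-$k_m$ sets $S$ of degree-$m$ monomials, with weight $\sum_{x^a\in S}wt_\lambda(x^a)$. The Pl\"ucker coordinate $p_S$ of the Hilbert point $[I_m]$ is nonzero exactly when the coordinate projection $V_m\to\operatorname{span}(S)$ restricts to an isomorphism on $I_m$; equivalently, when the complementary $P(m)$ monomials descend to a basis of the quotient $V_m/I_m$. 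With the sign convention of the paper (for which $\mu$ is a maximum of weights and semistability means $\mu\ge0$ for every $\lambda$), the numerical criterion on $\mathbb G(k_m,n_m)$ reads: $[I_m]$ is semistable iff, for every $\lambda$, \[ \mu(I_m,\lambda)=\max\Big\{\textstyle\sum_{x^a\in S}wt_\lambda(x^a)\ :\ p_S([I_m])\ne0\Big\}\ \ge\ 0. \]

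The crux is to identify the maximizing admissible set with the initial monomials, i.e. to show $\mu(I_m,\lambda)=\sum_{x^a\in in_{<_\lambda}(I)_m}wt_\lambda(x^a)$. Row-reducing any basis of $I_m$ with respect to the order $<_\lambda$ produces a basis whose leading terms are exactly the monomials of $in_{<_\lambda}(I)_m$; these are the pivot columns, so $I_m$ projects isomorphically onto their span and $S^\ast:=in_{<_\lambda}(I)_m$ is admissible. To see that $S^\ast$ is moreover weight-maximal, note that the admissible sets $S$ are precisely the bases of the (column) matroid represented by $I_m\subset V_m$ in the monomial coordinates; since $<_\lambda$ refines the $\lambda$-weight (breaking ties lexicographically), the greedy/exchange property of matroids selects the pivot set $S^\ast$ as the basis of maximal total weight. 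Equivalently, $S^\ast$ records the flat limit $\lim_{t\to0}\lambda(t)\cdot I_m$, which the lexicographic tie-break forces to be an honest monomial subspace even when several monomials share a $\lambda$-weight, and this limit computes the extremal Pl\"ucker weight by construction. Combining with the previous paragraph gives semistability iff $\sum_{x^a\in in_{<_\lambda}(I)_m}wt_\lambda(x^a)\ge0$ for all $\lambda$; restricting to the degree-$m$ part is legitimate because passing to the initial ideal preserves the Hilbert function, so $\dim in_{<_\lambda}(I)_m=k_m$.

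The step I expect to be the main obstacle is pinning down the sign and limit conventions so that ``initial $=$ largest monomial'' aligns with ``maximal admissible Pl\"ucker weight'' and the final inequality comes out as $\ge0$ rather than $\le0$; this is exactly the place where one must verify that the lexicographically refined initial subspace genuinely computes the flat limit $\lim_{t\to0}\lambda(t)\cdot I_m$ and hence the Mumford index. As a consistency check I would test the case $k_m=1$ (a single hypersurface), where $in_{<_\lambda}(I)_m$ is the leading monomial of $f$ and the formula recovers $\mu(f,\lambda)=\max_{x^a}wt_\lambda(x^a)$, matching the hypersurface convention recalled earlier in this section. Once the conventions are fixed, the matroid-greedy identification of $S^\ast$ is routine.
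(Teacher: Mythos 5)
Your proposal is correct, but there is no internal proof to compare it against: the paper states Proposition \ref{NumericalCriterion} as a quoted result from \cite{hhl} (``A criterion for stability of Hilbert points was worked out in \cite{hhl}. We briefly review their results.'') and offers no argument of its own. Your derivation is a legitimate, self-contained proof of the cited fact. The chain of reasoning is sound: the Hilbert--Mumford criterion on the Pl\"ucker-embedded Grassmannian reduces semistability to the extremal $\lambda$-weight over nonvanishing Pl\"ucker coordinates; the admissible sets $S$ with $p_S([I_m])\neq 0$ are exactly the bases of the column matroid of a matrix whose rows span $I_m$ in monomial coordinates; row reduction with respect to $<_\lambda$ identifies the pivot set with the degree-$m$ monomials of $in_{<_\lambda}(I)$ (using that every degree-$m$ element of the homogeneous ideal lies in $I_m$, so no dimension discrepancy arises); and the greedy/exchange property shows this pivot set is weight-maximal among all bases, ties being harmless since all maximal bases have equal total weight. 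The sign issue you flag is resolved exactly as you suggest: the ``max of weights $\geq 0$'' convention is equivalent to Mumford's because the criterion is quantified over all one-parameter subgroups and these are closed under $\lambda \mapsto \lambda^{-1}$; your $k_m=1$ consistency check against the hypersurface convention of \S\ref{sectstability} then pins the inequality as $\geq 0$. One small inaccuracy, which you partly anticipate: under the convention where the leading monomial has maximal $\lambda$-weight, $in_{<_\lambda}(I)_m$ is the flat limit of $I_m$ under $\lambda^{-1}$ as $t \to 0$ (equivalently under $\lambda$ as $t \to \infty$), not under $\lambda$ as $t \to 0$ as your parenthetical asserts; this does not damage the proof, since the step you actually rely on is the matroid-theoretic identification of the pivot set as the weight-maximal basis rather than the limit interpretation.
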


Note that when $k_m = 1$, this criterion coincides with the criterion for hypersurfaces described in \S \ref{sectstability}.

\begin{proposition}
\label{HilbertCompleteIntersection}
If $I \in \Hilb^m_{4,1}$ is not the $m^{th}$ Hilbert point of a $(2,3)$-complete intersection, then it is not $m$-Hilbert semi-stable for any $m \geq 2$.  Similarly, if $X \in \Chow_{4,1}$ is not a complete intersection, then it is not Chow-semi-stable.
\end{proposition}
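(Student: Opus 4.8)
The plan is to treat the two assertions separately, since the Chow statement is essentially already in hand. The Chow assertion is exactly the contrapositive of Theorem \ref{thmcubic}(0): every Chow-semistable $c\in\Chow_{4,1}$ is the cycle of a $(2,3)$-complete intersection, the only non-reduced semistable examples being ribbons, which are themselves complete intersections. So for the Chow half I would simply invoke that theorem and say nothing more.

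For the Hilbert half I would argue by the numerical criterion of Proposition \ref{NumericalCriterion}. The first and most important step is structural: a point $W\in\Hilb^m_{4,1}$ that is \emph{not} a complete-intersection Hilbert point always contains the full space of multiples $q_0V_{m-2}$ of a \emph{reducible} (rank $\le 2$) quadric $q_0$. To see this, recall (Proposition \ref{comparepehilb}) that $\Hilb^m_{4,1}$ is the closure of $\varphi_m(U)$, so $W=\lim_t\varphi_m(q_t,f_t)$ for complete intersections $(q_t,f_t)\in U$; since $\mathbb PE$ is proper, after base change this family extends to a limit $(q_0,f_0)\in\mathbb PE$. If $(q_0,f_0)\in U$, then continuity of $\varphi_m$ on $U$ forces $W=\varphi_m(q_0,f_0)$, a complete-intersection Hilbert point, contrary to hypothesis. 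Hence $(q_0,f_0)\notin U$, meaning $q_0$ and $f_0$ share a common linear factor $\ell$; thus $q_0=\ell\ell'$ is reducible and $f_0=\ell g$. (Note a cone, being irreducible with $f_0\notin q_0V_1$, lands in $U$, so only rank $\le 2$ occurs.) Passing to the limit in the closed conditions $q_tV_{m-2}\subseteq\varphi_m(q_t,f_t)$ and $f_tV_{m-3}\subseteq\varphi_m(q_t,f_t)$ gives $q_0V_{m-2}\subseteq W$ and $f_0V_{m-3}\subseteq W$.

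With the reducible quadric $q_0$ in hand I would destabilize $W$ using a $1$-PS $\lambda$ adapted to the shared factor $\ell$, of the same type as those in Propositions \ref{NoReducibleQuadric}--\ref{TriplePoints} (e.g.\ weights $(1,1,-1,-1)$ or $(-3,1,1,1)$ in coordinates where $\ell$ and the singular line of $q_0$ are standard). The clean input is that, because $q_0V_{m-2}\subseteq W$ and multiplication by $q_0$ carries the monomial basis of $V_{m-2}$ to distinct monomials with leading term $\mathrm{in}_\lambda(q_0)\cdot x^c$, the set $\mathrm{in}_\lambda(W)$ contains the $\binom{m+1}{3}$ monomials $\mathrm{in}_\lambda(q_0)\,x^c$, whose total contribution to $\sum_{x^a\in\mathrm{in}_\lambda(W)}wt_\lambda(x^a)$ is exactly $\binom{m+1}{3}\mu(q_0,\lambda)$ (the degree-$(m-2)$ weights summing to $0$ since $\lambda\in\SL(4)$). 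This term is strictly negative. It remains to bound the contribution of the other $(m-2)^2$ leading monomials, i.e.\ of the quotient $\overline W:=W/q_0V_{m-2}\subseteq H^0(\mathscr O_{Q_0}(m))$, where $Q_0=V(q_0)$.

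The hard part is precisely this last estimate, and it is where I expect the real work to lie. A crude bound — each of the $(m-2)^2$ remaining leading monomials has weight at most $m$ — is of the same order $m^3$ as $-2\binom{m+1}{3}$, so the sign of the total sum is not automatic and a genuinely better bound on $\mu(\overline W,\lambda)$ is required. The way through is to exploit the \emph{degenerate} nature of $\overline W$: it is a flat limit of the spaces $\bar f_t\cdot H^0(\mathscr O_{Q_t}(m-3))$ of cubic multiples, and the limiting cubic $\bar f_0=f_0|_{Q_0}$ vanishes identically on one component of the reducible quadric $Q_0$ (since $\ell\mid f_0$). Filtering $\overline W$ by divisibility by $\ell$ and analyzing the induced subspaces on the two components of $Q_0$ should yield $\mu(\overline W,\lambda)<2\binom{m+1}{3}$, hence $\sum_{x^a\in\mathrm{in}_\lambda(W)}wt_\lambda(x^a)<0$ and numerical instability for every $m\ge 3$; the case $m=2$ is vacuous, as $k_2=1$ and every nonzero quadric is the second Hilbert point of a complete intersection. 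Controlling this limiting subspace $\overline W$ uniformly in $m$ is the technical crux of the argument, and completing the reduction is what allows the later sections to transfer the entire GIT analysis from $\Hilb^m_{4,1}$ and $\Chow_{4,1}$ to the single space $\mathbb PE$ of honest $(2,3)$-complete intersections.
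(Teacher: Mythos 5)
Your structural step and your handling of the Chow half are fine: citing Theorem \ref{thmcubic}(0) does dispose of the Chow statement (the paper instead obtains it as the $m\to\infty$ limit of its Hilbert computation, which keeps the proposition self-contained and, more importantly, exhibits the explicit destabilizing 1-PS with weights $(-3,1,1,1)$ — the form in which the proposition is actually used later, in Proposition \ref{StabilityAgrees}); and your limit argument producing $(q_0,f_0)$ with common linear factor $\ell$ and $q_0V_{m-2}+f_0V_{m-3}\subseteq W$ is a legitimate variant of the paper's observation that these containments are closed conditions on $\Hilb^m_{4,1}$. The genuine gap is that your proof stops exactly where the proof has to happen: the bound on the remaining $(m-2)^2$ initial monomials is never established, only announced (``should yield,'' ``the technical crux''). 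Since you yourself verify that the crude bound fails, the Hilbert statement is not proved for any $m\ge 3$ as written.

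What you are missing is that no analysis of the limit space $\overline W$ beyond the two containments you already have is required; the estimate closes by committing to the right 1-PS. Take coordinates with $\ell=x_0$ and $q_0=x_0x_1$ (the case $q_0=x_0^2$ is even easier), and take $\lambda=(-3,1,1,1)$ — \emph{not} $(1,1,-1,-1)$, under which monomials of $f_0$ can have weight $+1$ and the argument genuinely fails for large $m$. Writing $f_0=x_0g$ with $g\not\equiv 0\pmod{x_1}$, every monomial of $f_0$ is $x_0$ times a quadratic monomial, hence has weight at most $-1$. Reducing $f_0x^b$ modulo $q_0V_{m-2}$ (i.e.\ setting $x_1=0$ in $g$) shows that, besides the $\binom{m+1}{3}$ monomials $x_0x_1x^a$ of total weight $-2\binom{m+1}{3}$, the set $in_{<_\lambda}(W)$ contains the $\binom{m}{3}-\binom{m-1}{3}$ monomials $x_0\,in_{<_\lambda}(g|_{x_1=0})\,x^b$ with $x^b$ a monomial in $x_0,x_2,x_3$, whose total weight is at most $-\binom{m}{3}$. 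This leaves only $k_m-\binom{m+1}{3}-\binom{m}{3}+\binom{m-1}{3}=\tfrac{1}{2}(m-2)(m-3)$ monomials to be bounded crudely by $m$ each, and the Hilbert--Mumford sum of Proposition \ref{NumericalCriterion} is at most $\tfrac{1}{2}m(m-2)(m-3)-\tfrac{1}{2}m^2(m-1)=-m(2m-3)<0$ for all $m\ge 2$. Note that the cancellation is exact at order $m^3$, which is why your concern was warranted, but also why the $f_0$-multiples — each carrying weight at most $-1$ precisely because the common factor $x_0$ was given weight $-3$ — are exactly enough to finish.
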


\begin{proof}
Let $I \in \Hilb^m_{4,1} \subset \mathbb{G} (k_m , n_m )$ be a vector space.  We note that there is a quadric $q$ and a cubic $f$, not divisible by $q$, such that $I$ contains all monomials of the form $q x^a$ and $f x^b$, where $x^a$ is a monomial of degree $m-2$ and $x^b$ is a monomial of degree $m-3$.  Indeed, this condition is closed in $\mathbb{G} (k_m , n_m )$, so it is satisfied by every element of $\Hilb^m_{4,1}$.  If $q$ and $f$ do not share a common linear factor, then $I$ is necessarily the $m^{th}$ Hilbert point of the intersection $q=f=0$.

Assume that $q$ and $f$ share a common linear factor.  We may choose coordinates such that $q = x_0 x_1$, and $f$ is divisible by $x_0$.  We may further assume that $f$ has a nonzero $x_0 x_3^2$ term.  Now, consider the 1-PS $\lambda$ with weights $(-3,1,1,1)$.  By definition, $I$ contains all of the monomials of the form $x_0 x_1 x^a$, where $x^a$ is a monomial of degree $m-2$, and of the form $x_0 x_3^2 x^b$, where $x^b$ is a monomial of degree $m-3$.  The number of such monomials is
$$ \binom{m+1}{3} + \binom{m}{3} - \binom{m-1}{3} = \frac{1}{6} (m-1)(m^2 + 4m-6) $$
and the total weight of these monomials is
$$ -2 \binom{m+1}{3} - \binom{m}{3} = - \frac{1}{2} m^2 (m-1). $$
It follows that
$$ \sum_{x^a \in in_{<_{\lambda}} (I)} wt_{\lambda} (x^a)$$
$$ \leq m \left[ \binom{m+3}{3} - (6m-3) - \frac{1}{6} (m-1) (m^2 + 4m-6) \right] - \frac{1}{2} m^2 (m-1) $$
$$ = \frac{1}{2} m(m-2)(m-3) - \frac{1}{2} m^2 (m-1) = -m(2m-3). $$
Since this is negative for all $m \geq 2$, we see that $I$ is not $m$-Hilbert semi-stable for these same $m$.  We obtain the analogous result for the Chow variety by noting that $\lim_{m \to \infty} \frac{-m(2m-3)}{m^2} < 0.$
\end{proof}

We would like to compare the numerical criterion for points in the Hilbert scheme to the numerical criterion for points on $\bP E$.  To this end, we have the following:

\begin{proposition}
\label{NumericalComparison}
Suppose $y \in \Hilb_{4,1}$ corresponds to a $(2,3)$-complete intersection $C\subseteq \mathbb P^3$.  Denote also by $y$ the corresponding point in $\mathbb PE$.  There exists a positive constant $c\in \mathbb Q$ such that
for any 1-parameter subgroup $\lambda$, we have $\mu^{\frac{2}{3}} (y, \lambda ) \geq c\mu^{\psi_\infty^*\Lambda_{\infty}} (y, \lambda )$.
\end{proposition}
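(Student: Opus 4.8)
The plan is to turn the comparison into an equality of Hilbert--Mumford indices corrected by exceptional divisors. By Proposition \ref{maptochow} the pullback $\varphi_\infty^*\Lambda_\infty$ is a positive multiple of $3\eta+2h=3(\eta+\frac{2}{3}h)$, so on $\bP E$ the index $\mu^{\frac{2}{3}}$ is, up to a positive rational factor $3k$ (where $\varphi_\infty^*\Lambda_\infty=k(3\eta+2h)$), the index for $\varphi_\infty^*\Lambda_\infty$. One would like to invoke functoriality of the numerical index to identify this with the Chow index $\mu^{\Lambda_\infty}(\varphi_\infty(y),\lambda)=\mu^{\psi_\infty^*\Lambda_\infty}(y,\lambda)$. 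The difficulty is that $\varphi_\infty$ is only a rational map: even though $y$ lies in $U$, the limit $\lim_{s\to0}\lambda(s)\cdot y$ may leave $U$, where $\varphi_\infty$ is undefined, so functoriality cannot be applied directly on $\bP E$. I would therefore work on the Rojas--Vainsencher resolution $W$ of \S\ref{secrvres}.

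On $W$ both $\pi_1\colon W\to\bP E$ and $\rho:=\psi_\infty\circ\pi_2\colon W\to\Chow_{4,1}$ are genuine $\SL(4)$-equivariant morphisms, and $W\cong\bP E$ over $U$. Let $\tilde y$ be the unique point of $W$ over $y$ (unique since $y\in U$). Functoriality of the numerical index (\cite[iii), p.49]{GIT}) applied to $\pi_1$ and to $\rho$ gives $\mu^{\pi_1^*(\eta+\frac{2}{3}h)}(\tilde y,\lambda)=\mu^{\frac{2}{3}}(y,\lambda)$ and $\mu^{\rho^*\Lambda_\infty}(\tilde y,\lambda)=\mu^{\Lambda_\infty}(\psi_\infty(y),\lambda)=\mu^{\psi_\infty^*\Lambda_\infty}(y,\lambda)$. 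Thus it remains to compare the two line bundles $L_1:=\pi_1^*\varphi_\infty^*\Lambda_\infty$ and $L_2:=\rho^*\Lambda_\infty$ on $W$. By construction $\varphi_\infty^*\Lambda_\infty$ is the extension to $\bP E$ of $(\varphi_\infty|_U)^*\Lambda_\infty$, so $L_1$ and $L_2$ agree on $\pi_1^{-1}(U)$; since $\bP E\setminus U$ has codimension $\ge 2$ and $\pi_1$ is an isomorphism over $U$, the difference is supported on the $\pi_1$-exceptional divisors, say $L_1-L_2=\sum_i a_i E_i$.

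The heart of the argument is to show this exceptional correction has the right sign, i.e. that $\sum_i a_i\,\mu^{\mathcal{O}(E_i)}(\tilde y,\lambda)\ge 0$. For the coefficients: $L_2=\rho^*\Lambda_\infty$ is nef (pullback of the ample $\Lambda_\infty$ by the morphism $\rho$), hence $\pi_1$-nef, while $L_1=\pi_1^*(\cdots)$ is $\pi_1$-numerically trivial; so $L_2-L_1$ is $\pi_1$-nef, and the negativity lemma for the birational morphism $\pi_1$ forces the $\pi_1$-exceptional divisor $L_1-L_2=\sum_i a_iE_i$ to be effective, $a_i\ge 0$. For the indices: each $E_i$ is $\SL(4)$-invariant (the Rojas--Vainsencher centers are invariant), and since $\SL(4)$ has no nontrivial characters its defining section $s_i$ is a genuine $\SL(4)$-invariant section of $\mathcal{O}(E_i)$; as $\tilde y\notin E_i$ we have $s_i(\tilde y)\ne 0$, and the existence of an invariant section nonvanishing at $\tilde y$ forces $\mu^{\mathcal{O}(E_i)}(\tilde y,\lambda)\ge 0$. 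Combining, $3k\,\mu^{\frac{2}{3}}(y,\lambda)=\mu^{L_1}(\tilde y,\lambda)=\mu^{L_2}(\tilde y,\lambda)+\sum_i a_i\mu^{\mathcal{O}(E_i)}(\tilde y,\lambda)\ge\mu^{\psi_\infty^*\Lambda_\infty}(y,\lambda)$, which gives the claim with $c=\frac{1}{3k}$. The main obstacle is precisely the determination of the sign of the exceptional correction: both the effectivity of the $a_i$ and the nonnegativity of $\mu^{\mathcal{O}(E_i)}$ off $E_i$ must point the same way, and getting either convention backwards would reverse the inequality.
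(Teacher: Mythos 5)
Your proof is correct, and it takes a genuinely different route from the paper's. The paper disposes of this proposition in one line, by citing Benoist's weight comparison for complete intersections (\cite[Prop.~4.3]{benoist}) together with Remark \ref{remasymptotics}, which pins down the constant $c$ via the asymptotic relation between the Hilbert and Chow polarizations; the content behind that citation is a direct estimate of the Chow weight of a complete intersection in terms of the $\lambda$-weights of its defining equations. You instead stay inside the paper's own geometric setup: on the Rojas--Vainsencher resolution $W$ of \S\ref{secrvres} both competing classes become pullbacks under honest $\SL(4)$-equivariant morphisms, their difference $\pi_1^*\varphi_\infty^*\Lambda_\infty-\rho^*\Lambda_\infty=\sum_i a_iE_i$ is $\pi_1$-exceptional, and the inequality reduces to two sign statements, both of which you get right: $a_i\ge 0$ follows from the negativity lemma because $\rho^*\Lambda_\infty$ is nef while $\pi_1^*\varphi_\infty^*\Lambda_\infty$ is $\pi_1$-numerically trivial, and $\mu^{\mathcal{O}(E_i)}(\tilde y,\lambda)\ge 0$ follows because each $E_i$ is invariant, $\SL(4)$ has no nontrivial characters (so the canonical section of $\mathcal{O}(E_i)$ is invariant, and the linearization is unique, making your divisor-class identity equivariant), and $\tilde y\in\pi_1^{-1}(U)$ misses every $E_i$. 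The auxiliary facts you invoke---functoriality and linearity of the numerical index for arbitrary (not necessarily ample) linearized bundles, the fact that a class trivial on $\pi_1^{-1}(U)$ is a combination of the $E_i$ (valid since $W$ is smooth and $\bP E\setminus U$ has codimension $\ge 2$, so the $E_i$ are the only divisorial components of the complement), and the nonnegativity of $\mu^L(x,\lambda)$ when an invariant section of $L$ does not vanish at $x$ (which needs no ampleness)---are standard and correctly deployed, and the constant $c=\tfrac{1}{3k}$, with $\varphi_\infty^*\Lambda_\infty=k(3\eta+2h)$ from Proposition \ref{maptochow}, is indeed a positive rational. As for what each approach buys: the paper's citation is short and rests on a result valid for complete intersections of arbitrary multidegree, whereas your argument is self-contained modulo standard GIT and birational geometry, it explains conceptually why the inequality runs in the stated direction (the extension of the Chow polarization across the indeterminacy locus exceeds the resolved pullback by an effective exceptional correction, which contributes nonnegatively at points away from the exceptional divisors), and it would apply verbatim in any setting admitting an equivariant resolution by a smooth projective variety under a group with no characters.
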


\begin{proof}
This follows directly from
Benoist  \cite[Prop. 4.3]{benoist} and Remark \ref{remasymptotics}.
\end{proof}

%%%%%%%%%%%%%%%%%%%%%%%%%%%%%%%%%%%%%%
% s6
\section{Quotients of the Rojas--Vainsencher resolution}\label{S:Non-AmpleBundles}
In this section we complete the arguments needed in Section \ref{sectstability} (esp. for Theorem \ref{mainthm2}) to handle  GIT for non-ample bundles on $\bP E$.  The main point is to use the results on Hilbert stability of the previous section together with the Rojas--Vainsencher resolution $W$ of the rational map $\bP E\dashrightarrow \Hilb_{4,1}$ (see \S \ref{secrvres}):
$$\xymatrix{
& W \ar[ld]_{\pi_1} \ar[rd]^{\pi_2} & \\
\bP E \ar@{-->}[rr] & & \Hilb_{4,1} }.$$

\subsection{Study of GIT stability on $W$}   Generally speaking, the key to understanding GIT quotients for non-ample bundles is to relate them to quotients of birational models with (semi)ample linearizations. In our situation, we consider the birational model $W$ of $\bP E$ with linearizations of the form
$$\alpha \pi_1^* \eta + \beta \pi_2^*(\psi_\infty^* \Lambda_{\infty}).$$ Note that for $\alpha, \beta \geq 0$, these linearizations  are semiample on $W$.
\begin{notation}
  Set $L(t):=\eta + th$ (on $\bP E$) and $\Lambda:=c\psi_\infty^*\Lambda_\infty$ (on $\Hilb_{4,1}$), where $c$ is the constant in Proposition \ref{NumericalComparison}.      Let $M(t)= \alpha \pi_1^*L(0)+ \beta \pi_2^*\Lambda$ (on $W$), where $\alpha$ and $\beta$ are such that $L(t)= \alpha L(0)+ \beta L(\frac{2}{3})$ (N.B. $\rank \Pic(\bP E)=2$).  We will write $W^{ss} (t)$ for the semistable locus on $W$ with respect to the linearization $M(t)$, and $\bP E^{nss} (t)$ for the numerically semistable locus with respect to $L(t)$.
\end{notation}

We start by making the following observations on the behavior of GIT on $W$.
\begin{proposition}
\label{StabilityAgrees}
$W^{ss} (t) \subseteq \pi_1^{-1} ( \bP E^{nss} (t))$.
\end{proposition}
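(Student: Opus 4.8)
The plan is to compare the two $\SL(4)$-linearizations $M(t)$ and $\pi_1^*L(t)$ on $W$ directly through the Hilbert--Mumford index, exploiting that the index is functorial under equivariant morphisms (\cite[Chap.~2, (iii), p.~49]{GIT}) and additive in the linearization. First I would record the hypothesis numerically: since Mumford semistability always implies the numerical inequality $\mu\ge 0$ (the easy direction of the numerical criterion; cf.~Remark \ref{remexample}), every $w\in W^{ss}(t)$ satisfies $\mu^{M(t)}(w,\lambda)\ge 0$ for all nontrivial $1$-PS $\lambda$. Writing $x=\pi_1(w)$ and $y=\pi_2(w)$, and using $L(t)=\alpha L(0)+\beta L(\frac{2}{3})$ together with $\Lambda=c\,\psi_\infty^*\Lambda_\infty$, functoriality and additivity give
\begin{align*}
\mu^{L(t)}(x,\lambda)&=\alpha\,\mu^{L(0)}(x,\lambda)+\beta\,\mu^{\frac{2}{3}}(x,\lambda),\\
\mu^{M(t)}(w,\lambda)&=\alpha\,\mu^{L(0)}(x,\lambda)+\beta c\,\mu^{\psi_\infty^*\Lambda_\infty}(y,\lambda).
\end{align*}
Subtracting and using $\beta\ge 0$ and $\mu^{M(t)}(w,\lambda)\ge 0$, I see that to conclude $x\in\bP E^{nss}(t)$ (i.e.\ $\mu^{L(t)}(x,\lambda)\ge 0$ for all $\lambda$) it suffices to prove the key inequality
$$
\mu^{\frac{2}{3}}(x,\lambda)\ \ge\ c\,\mu^{\psi_\infty^*\Lambda_\infty}(y,\lambda)\qquad(\star)
$$
for every $w\in W^{ss}(t)$ and every $\lambda$.

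Over the locus $U$ of genuine complete intersections the inequality $(\star)$ is already available. Indeed, by \S\ref{secrvres} the resolution $W$ is isomorphic to $\bP E$ over $U$, so for $w\in\pi_1^{-1}(U)$ the points $x=\pi_1(w)$ and $y=\pi_2(w)$ are one and the same $(2,3)$-complete intersection, and $(\star)$ is then precisely Proposition \ref{NumericalComparison} (with the constant $c$ fixed there). Thus the whole statement reduces to understanding $(\star)$ over the exceptional locus $W\setminus\pi_1^{-1}(U)$.

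This exceptional-locus analysis is the step I expect to be the main obstacle. The cleanest route would be to show that $(\star)$ in fact persists on all of $W$, by invoking Benoist's comparison of Hilbert--Mumford indices \cite[Prop.~4.3]{benoist} on the resolution itself (of which Proposition \ref{NumericalComparison} is merely the restriction to $U$); the subtlety is that over the exceptional locus $x$ and $y$ parametrize genuinely different schemes, so one must check that the comparison is still governed by the correct sign. Failing a clean global form, the alternative is to prove that no point of $W^{ss}(t)$ lies over $\bP E\setminus U$, rendering the exceptional case vacuous: a point $x\in\bP E\setminus U$ has $q$ and $f$ sharing a common linear factor, and the plan would be to combine the explicit destabilizing one-parameter subgroups of Propositions \ref{NoReducibleQuadric}--\ref{TriplePoints} (which act simultaneously in the $\eta$- and $h$-directions) with Proposition \ref{HilbertCompleteIntersection} controlling $y=\pi_2(w)$ on the Hilbert/Chow side, so as to force $\mu^{M(t)}(w,\lambda)<0$ for a suitable $\lambda$. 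The delicate point in either approach is that $M(t)=\alpha\pi_1^*L(0)+\beta\pi_2^*\Lambda$ mixes the two projections, so one must control the index of $x$ on $\bP E$ and of $y$ on the Hilbert/Chow side at the same time; it is exactly here that the geometry of $\pi_2$ supplied by the Rojas--Vainsencher resolution, and the fact that the relevant destabilizing subgroups are common to both sides, make the two contributions compatible.
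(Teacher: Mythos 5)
Your proposal is correct and is essentially the paper's own argument: the non-exceptional locus is handled exactly by your reduction to $(\star)$ via Proposition \ref{NumericalComparison} together with linearity and functoriality of the Hilbert--Mumford index, and the exceptional locus is handled by your fallback route, which the paper executes in two lines --- the 1-PS with weights $(-3,1,1,1)$ from the ``Moreover'' clause of Proposition \ref{NoReducibleQuadric} makes both $\mu^{\pi_1^*L(0)}$ and $\mu^{\pi_1^*L(\frac{2}{3})}$ negative there, Proposition \ref{HilbertCompleteIntersection} (whose proof uses that same 1-PS) makes $\mu^{\pi_2^*\Lambda}$ negative, and linearity then gives $\mu^{M(t)}(w,\lambda)<0$. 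So the step you flag as the main obstacle is immediate from the results already in hand, and no global form of $(\star)$ on $W$ is needed.
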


\begin{proof}
First, suppose that $y \in W$ is in the exceptional locus of the map $\pi_1$.  Then $\pi_1 (y)$ lies in the locus of pairs $(q,f)$ such that $q$ and $f$ share a common linear factor.  Similarly, $\pi_2 (y)$ is not a complete intersection of a quadric and a cubic.  It follows from Proposition \ref{NoReducibleQuadric} that, for the 1-PS $\lambda$ with weights $(-3,1,1,1)$, we have
\begin{eqnarray*}
 \mu^{\pi_1^* L(0)} (y, \lambda ) &<& 0 \\
 \mu^{\pi_1^* L(\frac{2}{3})} (y, \lambda ) &<& 0.
\end{eqnarray*}
Moreover, it follows from Proposition \ref{HilbertCompleteIntersection} that
$$ \mu^{\pi_2^* \Lambda} (y, \lambda ) < 0. $$
By the linearity of the Hilbert--Mumford index, $y$ is numerically unstable for all the line bundles in question.  It follows that $W^{nss} (t)$ is contained in the ample locus of $M(t)$, and thus $W^{nss} (t) = W^{ss} (t)$.

Now suppose that $y \notin \pi_1^{-1} ( \bP E^{nss} (t))$ is not in the exceptional locus of the map $\pi_1$.  By Proposition \ref{NumericalComparison} together with the linearity and functoriality of the Hilbert--Mumford index, there is a one-parameter subgroup $\lambda$ such that
$$0 > \mu^{\alpha L(0) + \beta L(\frac{2}{3})} (y, \lambda ) \geq \mu^{\alpha \pi_1^* L(0) + \beta \pi_2^* \Lambda} (y, \lambda ) .$$
It follows that $y \notin W^{ss} (t)$.
\end{proof}

A consequence of Proposition \ref{StabilityAgrees} is that, for every $t$ in the range $0 < t \leq \frac{2}{3}$, $W^{ss} (t)=W^{nss}(t)$ is contained in the locus on which $\pi_2$ restricts to an isomorphism.  It follows that every invariant section of $M(t)$ has affine non-vanishing locus, hence the usual results about GIT hold for the linearization $M(t)$ despite the fact that it is only semi-ample, rather than ample.  As another consequence, we may think of points in $W^{ss} (t)$ as $(2,3)$-complete intersections.  Combining Proposition \ref{StabilityAgrees} with the results of \S \ref{sectnumerical}, we can identify many $t$-unstable points in $W$.  It remains to show that each curve that has not been explicitly destabilized thus far is in fact $t$-semi-stable.  We will prove this in Theorem \ref{mainthm2}.   This type of argument is related in spirit to the potential stability argument used by Gieseker and Mumford for the GIT construction of $\overline M_g$ (e.g.~see \cite[\S4.C]{hm}).

Finally, we recall briefly the notion of the basin of attraction from \cite[Def.~4]{hl}.  If the stabilizer of a curve $C'$ contains a 1-PS $\lambda$, then the \emph{basin of attraction} (of $C'$ with respect to $\lambda$)  is defined to be
$$ A_{\lambda} (C') := \{ C \ \vert\  C \text{ specializes to } C' \text{ under } \lambda \} .$$
If $C'$ is strictly semi-stable with respect to $\lambda$, meaning that $\mu (C' , \lambda ) = 0$, then  $C'$ is semi-stable if and only if $C$ is semi-stable for every (equivalently, any) $C \in A_{\lambda} (C')$ (see \cite[Lem.~4.3]{hh2}).

We are now ready to prove the following key result describing the stability on the space $W$ which interpolates between $\bP E$ and $\Hilb_{4,1}$. The main advantages  here are: (1) on $W$ we are in a standard GIT set-up (i.e.~(semi-)ample linearizations, as opposed to the situation on  $\bP E\gquot_t\SL(4)$ for $t>\frac{1}{2}$), and (2) the natural spaces $\Hilb^m_{4,1}\gquot \SL(4)$ are then easily described using $W\gquot_t\SL(4)$.

\begin{theorem}\label{mainthm2}
Let $C \in W^{ss} (t)$.  Then $C$ is a complete intersection of a quadric and a cubic in $\bP^3$, and:
\begin{enumerate}
\item  $C \in W^{(s)s} ( \frac{2}{3} )$ if and only if it is Chow (semi-)stable.
\item \label{teomain22} $C \in W^{(s)s} (t)$ for all $t \in ( \frac{6}{11} , \frac{2}{3} )$ if and only if it is Chow (semi-)stable, but not a ribbon, an elliptic triborough, or a curve on a quadric cone with a tacnode at the vertex of the cone.  The closed orbits of strictly $t$-semi-stable points correspond to the maximally degenerate curve with $A_5$ singularities (i.e. $C_{2A_5}$ in the notation of \S \ref{sectboundary}) and the unions of three conics meeting in two $D_4$ singularities (see Remark \ref{minorbit2} (1) and (4)).
\item  $C \in W^{ss} (t)$ for all $t \in ( \frac{2}{5} , \frac{6}{11} )$ if and only if
    \begin{enumerate}
    \item  $C \in W^{ss} (t)$ for $t\in (\frac{6}{11},\frac{2}{3})$ and is not an irreducible cuspidal curve with hyperelliptic normalization, or
    \item  $C$ contains a conic that meets the residual curve in a singularity of type $A_7$, but otherwise satisfies condition (2) of Theorem \ref{thmcubic}.
    \end{enumerate}
    The closed orbits of strictly $t$-semi-stable points are the same as for $t \in ( \frac{6}{11} , \frac{2}{3} )$.
\item  $C \in W^{ss} (t)$ for $t \in ( \frac{2}{9} , \frac{2}{5} )$ if and only if
    \begin{enumerate}
    \item  $C \in W^{ss} (t)$ for $t \in ( \frac{2}{5} , \frac{6}{11} )$ and is not an irreducible nodal curve with hyperelliptic normalization, or
    \item  $C$ has a triple-point singularity whose tangent cone is the union of a double conic and a conic meeting in two points, but otherwise satisfies condition (2) of Theorem \ref{thmcubic}.
    \end{enumerate}
    The closed orbits of strictly $t$-semi-stable points correspond to the maximally degenerate curve with $A_5$ singularities, the unions of three conics meeting in two $D_4$ singularities, and the unions of a conic and a double conic meeting at two points (see Remark \ref{minorbit2} (1), (3) and (4)).
\item  $C \in W^{(s)s} (t)$ for $t \in (0, \frac{2}{9} )$ if and only if it is contained in a smooth quadric and it is terminally (semi-)stable.
\end{enumerate}
\end{theorem}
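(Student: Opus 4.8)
The plan is to prove each of the five parts by establishing two inclusions separately: the necessity (``only if'') direction, which cuts out the unstable locus, and the sufficiency (``if'') direction, which shows that every curve not yet explicitly destabilized is in fact $t$-semi-stable. The whole argument is organized around the variation-of-GIT structure: since $M(t)=\alpha\pi_1^*L(0)+\beta\pi_2^*\Lambda$ depends linearly on $t$, the semistable locus $W^{ss}(t)$ is constant on each chamber complementary to the finitely many critical slopes $t\in\{\frac{6}{11},\frac{2}{5},\frac{2}{9}\}$ singled out in Propositions \ref{A7Sing}, \ref{DoubleConic}, and \ref{TripleConic}, while the endpoint $t=\frac{2}{3}$ and the range $t<\frac{2}{9}$ serve as anchors whose quotients are already understood. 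I would first record the opening assertion—that every $C\in W^{ss}(t)$ is a $(2,3)$-complete intersection—which is immediate from Proposition \ref{StabilityAgrees} together with Proposition \ref{NoReducibleQuadric}, since a pair sharing a common linear factor (equivalently, a point in the exceptional locus of $\pi_1$) is numerically destabilized for every $t\le\frac{2}{3}$.

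For necessity I would simply assemble the destabilization results of \S\ref{sectnumerical}. By Proposition \ref{StabilityAgrees} any curve that is numerically $t$-unstable on $\bP E$ is excluded from $W^{ss}(t)$, so Corollary \ref{corribbon} removes ribbons, Proposition \ref{NoReducibleQuadric} removes elliptic triboroughs (rank-$2$ quadrics), Proposition \ref{ConePoint} removes curves on a cone with a worse-than-nodal/cuspidal point at the vertex, and Propositions \ref{A7Sing}--\ref{TripleConic} remove the separating-$A_7$/cuspidal, double-conic/cone, and triple-conic/singular loci exactly at the slopes $\frac{6}{11},\frac{2}{5},\frac{2}{9}$. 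Matching these thresholds to the stated ranges yields the ``only if'' direction in parts (1)--(5), and simultaneously shows that the listed strictly semi-stable curves are precisely the ones sitting on the walls.

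For sufficiency I would anchor at the endpoints and then descend through the walls. At $t=\frac{2}{3}$ the linearization equals $\pi_2^*\Lambda$, so $W^{ss}(\frac{2}{3})$ is the $\pi_2$-preimage of the Chow-semistable locus and part (1) follows from Theorem \ref{thmcubic}, using that $\pi_2$ is an isomorphism over the complete-intersection locus; for $t\in(0,\frac{2}{9})$ the linearization is ample and $W\cong\bP E$ over $U$, so part (5) is exactly Lemma \ref{TerminalStability}. For the three intermediate chambers I would exploit that, by Proposition \ref{computemu}, $\mu^t(C,\lambda)=\mu(q,\lambda)+t\,\mu(f,\lambda)$ is affine in $t$: a curve stable at $t=\frac{2}{3}$ (guaranteed when it is Chow-stable, via the index comparison of Proposition \ref{NumericalComparison}) can only be destabilized as $t$ decreases through the slope at which some $\mu^t(C,\lambda)$ changes sign, and the destabilization propositions show these slopes are exactly $\frac{6}{11},\frac{2}{5},\frac{2}{9}$ for the corresponding bad singularities. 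For the strictly semi-stable curves lying on each wall I would invoke the basin-of-attraction criterion recalled before the theorem to reduce their semistability to that of the finitely many closed orbits of Remarks \ref{minorbit1} and \ref{minorbit2} ($C_{2A_5}$, the three-conic $D_4$ configuration, the conic-plus-double-conic), whose semistability is a finite check by torus symmetry. This is the potential-stability-type argument alluded to in the text.

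The main obstacle is to verify, at the flips $t=\frac{6}{11}$ and $t=\frac{2}{5}$, that the \emph{newly} semi-stable curves—those carrying a separating $A_7$ (part (3)(b)) or a double-conic tangent cone (part (4)(b))—are destabilized by \emph{no} one-parameter subgroup, rather than merely by none of those used so far. I would handle this by a Kempf-style analysis of a worst destabilizing 1-PS, which, using the linearity and functoriality of the Hilbert--Mumford index (Propositions \ref{computemu}, \ref{NumericalComparison}), reduces to a finite collection of monomial-weight inequalities for the relevant normal forms, and I would check that the resulting sign-change slope is precisely the predicted wall. Pinning down the closed orbits into which these flipped curves and the remaining strictly semi-stable curves degenerate—so that the basin-of-attraction bookkeeping closes up and reproduces the orbit lists in each part—is the delicate endgame.
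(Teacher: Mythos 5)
Your overall architecture matches the paper's: necessity via the destabilizations of \S\ref{sectnumerical} pushed through Proposition \ref{StabilityAgrees}, anchoring at the endpoints (parts (1) and (5)), VGIT interpolation inside chambers, and basin-of-attraction bookkeeping at the walls. However, there is a genuine gap in your treatment of the curves for which interpolation is unavailable, and this is where the bulk of the paper's proof of parts (2)--(4) actually lives.

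The problem cases are not only the newly semi-stable curves at $t=\frac{6}{11}$ and $t=\frac{2}{5}$. Already in part (2), the strictly Chow semi-stable curves lying on a quadric cone (an $A_k$, $k\ge 5$, at a smooth point of the cone, or a $D_4$) are neither Chow \emph{stable} nor terminally semi-stable, so neither $W^s(\frac{2}{3})\subset W^{ss}(t)$ nor $W^{ss}(\frac{2}{3})\cap W^{ss}(\epsilon)\subset W^{ss}(t)$ applies to them. Your plan for strictly semi-stable curves --- reduce via the basin of attraction to the closed orbits and do ``a finite check by torus symmetry'' --- fails precisely here: the polystable curves these degenerate to at $t=\frac{2}{3}$ are $C_{A,B}$ (which has a tacnode at the vertex), the ribbon, or $C_D=V(x_0x_3,x_1^3+x_2^3)$ (which lies on a rank $2$ quadric), and \emph{all} of these are $t$-unstable for every $t<\frac{2}{3}$ by Propositions \ref{NoReducibleQuadric} and \ref{ConePoint}; indeed the cone curves with an $A_5$ at a smooth point cannot degenerate to either $t$-semi-stable closed orbit (quadric rank cannot increase under specialization), so they become $t$-\emph{stable} in the chamber and no basin reduction is available at all. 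The paper's argument runs in the opposite direction: assume some $\lambda$ destabilizes $C$ at $t$; by VGIT normalize $\lambda$ so that $\mu^{2/3}(C,\lambda)=0$; then the limit $C'$ of $C$ under $\lambda$ is $\frac{2}{3}$-semi-stable and $\lambda$-fixed, hence is one of the finitely many $\frac{2}{3}$-polystable curves, and $\lambda$ lies in its explicit stabilizer torus; finally, the \emph{direction} of specialization (which curves lie in the basin of $+\lambda$ versus $-\lambda$) pins down the sign of $\lambda$, and a direct weight computation gives $\mu^t(C,\lambda)=\mu^t(C',\lambda)>0$, a contradiction. Your ``Kempf-style worst 1-PS'' paragraph gestures at something like this, but the two ingredients that make the argument close --- (i) the destabilizing 1-PS must lie in the stabilizer of a known wall-polystable limit, and (ii) the basin direction forces its sign --- are missing from your sketch; without them, the ``finite collection of monomial-weight inequalities'' is not finite, since \emph{a priori} you must rule out every one-parameter subgroup, not just those adapted to a chosen normal form.
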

\begin{proof}
As already mentioned, the strategy of the proof is to show that every curve that has not been explicitly destabilized by using  the results of \S \ref{sectnumerical}  and Proposition \ref{StabilityAgrees} is in fact $t$-semi-stable. We start by proving items (1) and (5), which  identify the quotients corresponding to the two end chambers of the VGIT on $W$ with the two GIT quotients discussed in \S \ref{sectboundary}. We then identify GIT walls using $t$-semi-stable curves with positive dimensional stabilizer. We use the basin of attraction to determine $t$-semi-stable curves at each wall.  By general variation of GIT, each such curve that is contained in a smooth quadric is in fact $t$-semi-stable for all smaller values of $t$.  In this way we identify the majority of $t$-semi-stable curves.  To establish the $t$-semi-stability of the remaining curves, we use another basin of attraction argument.

\vspace{0.25cm}

\noindent{\it Proof of (1)}.  The isomorphism $\mathbb PE\gquot_{\frac{2}{3}}\SL(4)\cong \Chow_{4,1}\gquot \SL(4)$ was established in   \cite{g4ball}. Since $M(\frac{2}{3})=\Lambda$ is semi-ample (it is the pull-back of the natural polarization $\mathscr O(1)$ on $\Chow_{4,1}$), one obtains the identification  $\operatorname{Proj}R\left(W,M(\frac{2}{3})\right)^{\SL(4)}=\operatorname{Proj}R(\Chow_{4,1},\mathscr O(1))^{\SL(4)}$.    In fact, although $\Lambda$ is only semi-ample, we have shown that  $W^{ss}(M(t))=W^{nss}(M(t))$,
and so one may also conclude that the (categorical) GIT quotients agree: $W\gquot_{M(\frac{2}{3})}\SL(4)\cong \Chow_{4,1}\gquot \SL(4)$.  (QED (1))

\vspace{0.25cm}

\noindent{\it Proof of (5)}.  Suppose now that $0<t < \frac{2}{9}$ (in particular, $L(t)$ is ample on $\bP E$) and note that by Prop.~\ref{StabilityAgrees} and Lem.~\ref{TerminalStability}, $W^{ss} (t)\subseteq \pi_1^{-1}(\mathbb PE^{nss}(t))=\pi_1^{-1}(\mathbb PE^{ss}(t))$ is contained in the open set $V$ consisting of pairs $(q,f)$ where $q$ is smooth.  Since $\pi_1$ restricts to an isomorphism on the open set $V$ and $M(t) \vert_V = L(t) \vert_V$,
(5)  follows from Lemmas \ref{lemGIT} and  \ref{TerminalStability}. (QED  (5))

\vspace{0.25cm}

We now turn to the intermediate chambers.  By general variation of GIT, we know that if $C \in W^{ss} ( \epsilon ) \cap W^{ss} ( \frac{2}{3} )$, then $C \in W^{ss} (t)$ for all $t$ in the range $\epsilon < t < \frac{2}{3}$.  On the other hand, suppose that $C$ is neither Chow semi-stable nor terminally semi-stable.  It follows that one of the following must be true:
\begin{enumerate}
\item  $C$ contains a line $L$ such that $L \cap \overline{C \backslash L}$ is a singular point of the residual curve;
\item  $C$ has a singularity of multiplicity greater than three;
\item  $C$ is contained in a quadric of rank 1 or 2;
\item  $C$ is contained in a quadric cone and has a singularity of type other than $A_2$, $A_3$ or $A_4$ at the singular point of the cone;
\item  $C$ is contained in a quadric cone \textit{and} has a separating $A_7$ singularity, or
\item  $C$ is contained in a quadric cone \textit{and} has a triple-point singularity of type other than $D_4$.
\end{enumerate}
By Proposition \ref{StabilityAgrees} and the results of \S \ref{sectnumerical}, any of the first four possibilities imply that $C \notin W^{ss} (t)$ for any $t < \frac{2}{3}$.  The fifth case can only be $t$-semi-stable for $t \in [ \frac{2}{9} , \frac{6}{11} ]$.  In the last case, $C$ specializes to its ``tangent cone'' under the one-parameter subgroup described in \S \ref{sect:triplepoints}.  By  the proof of Proposition \ref{TriplePoints}, this one-parameter subgroup has weight zero on $C$, and hence if $C$ is $t$-semi-stable then its tangent cone is $t$-semi-stable as well.  Since the singularity is not of type $D_4$, the tangent cone is non-reduced.  It cannot be a triple conic unless $t= \frac{2}{9}$, because by Proposition \ref{TripleConic} a triple conic and a curve on a singular quadric cannot be simultaneously semi-stable except at this critical value.  It therefore must be the union of a conic and a double conic on a quadric cone, which can only be $t$-semi-stable for $t \in [ \frac{2}{9} , \frac{2}{5} ]$.

\vspace{0.25cm}
Having destabilized the necessary curves, we now turn our attention to showing that various curves are (semi-)stable for particular values of $t$.

\vspace{0.25cm}

\noindent{\it Proof of (2)}. We consider first the $t$-interval $( \frac{6}{11} , \frac{2}{3} )$.  By the above, every $t$-semi-stable point for $t \in ( \frac{6}{11} , \frac{2}{3} )$ is either terminally semi-stable or Chow semi-stable.  The only
terminally polystable curves that are not Chow semi-stable are the triple conic, the double conics, and the curves with separating $A_7$ singularities (\S \ref{cubic3folds}, \S\ref{sect33}), and none of these can be $t$-semi-stable for $t > \frac{6}{11}$ (\S \ref{sectnumerical}).  It follows that
$W^{ss} (t) \subset W^{ss} ( \frac{2}{3} )$ for all $t$ in this interval.  As a consequence, since a wall $t_0$ of this GIT chamber is characterized by $W^{ss} ( t_0 ) \nsubseteq W^{ss} ( \frac{2}{3} )$, the wall must lie outside the open $t$-interval $( \frac{6}{11} , \frac{2}{3} )$.  By general variation of GIT, we therefore have that $W^s ( \frac{2}{3} ) \subset W^{ss} (t)$ for $t$ in this
interval, and $W^{ss} ( \frac{2}{3} ) \cap W^{ss} (\epsilon) \subset W^{ss} (t)$ for all $0 < \epsilon < t$ (and in particular for $0 < \epsilon < \frac{2}{9}$) as well.

Thus it remains to determine the $t$-semi-stability of the remaining strictly Chow semi-stable points that are not terminally semi-stable.  These all lie on the quadric cone.  Considering the possibilities from Theorem \ref{thmcubic}, we see that the only such curves that have not already been destabilized are the curves on the quadric cone with $A_k$  ($k\ge 5$) singularities  (that do not have an $A_n$  ($n\ge 3$) singularity  at the vertex of the cone) and the curves on the quadric cone with $D_4$ singularities.

Suppose first that $C = V(q,f)$ is a $\frac{2}{3}$-semi-stable curve on the quadric cone that has an $A_k$  ($k\ge 5$) singularity at a smooth point of the cone, but does not have an  $A_n$ ($n\ge 3$) singularity  at the vertex of the cone.    We argue by contradiction that $C$ is also $t$-semi-stable.  Suppose that $\lambda$ is a one-parameter subgroup such that $\mu^t ((q,f), \lambda ) < 0$.  By standard facts from variation of GIT, one can assume  that $\mu^{\frac{2}{3}} ((q,f), \lambda ) = 0$  (see e.g.~\cite[\S 4.1.2 ]{lazasurvey}).   Now let $C'=V(q',f')$ be the specialization of $C$ under $\lambda$.   Since  $\lambda$ fixes $C'$, it follows from the basin of attraction argument  that $C'$ is $\frac{2}{3}$-semi-stable as well.  The only $\frac{2}{3}$-semi-stable curve in the orbit closure of $C$, however, is a curve of the form
$$C'=C_{A,B}=V(x_2^2-x_1x_3,Ax_1^3+Bx_0x_1x_2+x_0^2x_3),$$  whose stabilizer in the given coordinates is the $\mathbb{C}^*$ with weights $\pm (3,1,-1,-3)$.  All of the curves that specialize to $C_{A,B}$ under the $1$-PS with weights $(3,1,-1,-3)$ have an $A_n$ ($n\ge 3$) singularity at the vertex of the cone.   Consequently,  $\lambda$ must be the $1$-PS with weights $(-3,-1,1,3)$.   This gives  $\mu(q', \lambda ) = 2$ and $\mu(f' , \lambda ) = -3$, so that  $\mu^t ((q',f'), \lambda ) > 0$ (any  other representative of $f'$ will have weight $\ge -3$).  Now since $(q',f')$ is the limit of $(q,f)$ under $\lambda$, we have $\mu^t ((q,f), \lambda ) =\mu^t ((q',f'), \lambda ) > 0$, a contradiction.

Similarly, if $C$ has a $D_4$ singularity, then $C$ specializes to its tangent cone under the one-parameter subgroup described in \S \ref{sect:triplepoints}, and this one-parameter subgroup has weight zero on $C$ by Proposition \ref{TriplePoints}.  Hence $C$ is $t$-semi-stable if and only if its tangent cone is $t$-semi-stable as well.  Now, suppose that there is a one-parameter subgroup $\lambda$ such that $\mu^t ((q,f), \lambda ) < 0$.  As in the previous case, we see that $\lambda$ must be contained in the stabilizer of the $\frac{2}{3}$-polystable limit of $C$, which is $C_D=V(x_0x_3,x_1^3+x_2^3)$.  The stabilizer of $C_D$ is the 2-dimensional torus consisting of one-parameter subgroups with weights of the form $\pm (a,-1,-1,2-a)$.  Since $C$ specializes to $C_D$ under $\lambda$ and $C$ is not contained in a reducible quadric, we see that $\lambda$ has weights of the form $(a,-1,-1,2-a)$.  But then $\mu(x_0x_3, \lambda ) = 2$ and $\mu(x_1^3+x_2^3 , \lambda ) = -3$, so as above,  $\mu^t ((q,f), \lambda ) > 0$, a contradiction.  (QED (2))

\vskip  0.25cm
The proofs of the remaining parts are similar.  We include the details for the convenience of the reader.

\vspace{0.25cm}

\noindent{\it Proof of (3)}. We next consider the $t$-interval $( \frac{2}{5} , \frac{6}{11} )$.  For $t < \frac{6}{11}$, cuspidal curves with hyperelliptic normalization can no longer be $t$-semi-stable, so there must be a GIT wall at $t = \frac{6}{11}$.  This implies that there is a $\frac{6}{11}$-semi-stable curve with positive dimensional stabilizer that is not $t$-semi-stable for $t = \frac{6}{11} + \epsilon$.  Reviewing the possibilities, we see that there is only one possible such curve, namely
$$ C' = V(x_1^2+x_0x_2 , x_2^2x_3+x_0x_3^2),$$
which has both a separating $A_7$ singularity and a cusp at the vertex of the quadric cone on which it lies.  If $C \in W^{ss} (\frac{6}{11} + \epsilon )$ is not in $W^{ss} ( \frac{6}{11} - \epsilon )$, then the orbit of $C$ under the $\mathbb{C}^*$ that stabilizes $C'$ must contain $C'$ in its closure.  It follows that, up to change of coordinates, $C$ must be of the form:
$$ C = V(x_1^2 + x_0x_2 + \alpha x_0^2 + \beta x_0x_1, x_2^2x_3 + x_0x_3^2 + f(x_0 , x_1 , x_2)),$$
where $\alpha , \beta$ are constants and $f$ is a cubic.  In other words, $C$ must be contained in a singular quadric and have a cusp at the vertex.  We therefore see that every $(\frac{6}{11} + \epsilon )$-semi-stable curve that is not of this form is $(\frac{6}{11} - \epsilon )$-semi-stable as well.

To identify the remaining $\frac{6}{11}$-semi-stable curves, we use the basin of attraction of $C'$.  Namely, since the curve $C'$ is $\frac{6}{11}$-semi-stable, we see that every curve in the basin of attraction of $C'$ is also $\frac{6}{11}$-semistable.  By Proposition \ref{A7Sing}, we see that this includes every curve with a separating $A_7$ singularity apart from those that we have explicitly destabilized already.  If such a curve $C$ is contained in a smooth quadric, then $C \in W^{ss} ( \frac{6}{11} ) \cap W^{ss} (0)$, so $C \in W^{ss} (t)$ for all $t \in [ 0, \frac{6}{11} ]$.

It remains to show that the curves contained in a quadric cone with a separating $A_7$ singularity are in fact $( \frac{6}{11} - \epsilon )$-semi-stable.  So let $C=V(q,f)$ be such a curve.  To show $C$ is $t$-semi-stable, we argue as above, noting that if $\lambda$ is a 1-PS such that $\mu^t ((q,f), \lambda ) < 0$, then $\lambda$ must be contained in the stabilizer of the $\frac{6}{11}$-polystable limit of this curve, which is the curve $C'$ above.  The stabilizer of $C'$ is a one-dimensional torus, so this determines the 1-PS $\lambda$ uniquely.  Indeed, in these coordinates, $\lambda$ must be the 1-PS with weights $(7,3,-1,-9)$.  Then $\mu (x_1^2+x_0x_2 , \lambda ) = 6$ and $\mu (x_2^2x_3+x_0x_3^2, \lambda ) = -11$, so as above  $\mu^t ((q,f), \lambda ) > 0$, a contradiction.

To complete this part of the proof, we note that by the above we obtain the inclusion $W^{ss} (t) \subset W^{ss} ( \frac{6}{11} - \epsilon )$ for all $t \in ( \frac{2}{5} , \frac{6}{11} )$, and hence this interval is contained in a single GIT chamber.  (QED (3))

\vspace{0.25cm}

\noindent{\it Proof of (4)}.  By arguments nearly identical to the previous case, we identify a GIT wall at $t = \frac{2}{5}$ corresponding to the curve $C' = V( x_0 x_2 - x_1^2 , x_1x_3^2)$,
which is the union of a double conic and two rulings of a quadric cone.  As before, if $C \in W^{ss} (\frac{2}{5} + \epsilon )$ is not in $W^{ss} ( \frac{2}{5} - \epsilon )$, then the orbit of $C$ under the stabilizer of $C'$ must contain $C'$ in its closure.  It follows that, up to change of coordinates, $C$ must be of the form $C = V( q(x_0,x_1,x_2), f(x_0,x_1,x_2,x_3))$,
where $f$ is a cubic containing the vertex $(0,0,0,1)$.  In other words, $C$ must be contained in a singular quadric and have a node at the vertex.  We therefore see that every $(\frac{2}{5} + \epsilon )$-semi-stable curve that is not of this form is $(\frac{2}{5} - \epsilon )$-semi-stable as well.

As in the previous case, we see that every curve with a double conic component, apart from those we have explicitly destabilized, is $\frac{2}{5}$-semi-stable, as such curves are in the basin of attraction of $C'$.  Specifically, if a curve $C$ contains a double conic component that is contained in the plane $x_0 = 0$, then $C$ specializes to $C'$ under the 1-PS with weights $(-3,1,1,1)$, which is contained in the stabilizer of $C'$.   Furthermore, if such a curve is contained in a smooth quadric then it is contained in $W^{ss} ( \frac{2}{5} ) \cap W^{ss} (0)$, and hence it is $t$-semi-stable for all $t \leq \frac{2}{5}$.

It remains to show that the double conics contained in a quadric cone are $( \frac{2}{5} - \epsilon )$-semi-stable as well.  For this, we argue as above, noting that if $\lambda$ is a 1-PS such that $\mu^t ((q,f), \lambda ) < 0$ for such a curve $C=V(q,f)$, then $\lambda$ must be contained in the stabilizer of the $\frac{2}{5}$-polystable limit of this curve, which is the curve $C'$ above.  The stabilizer of $C'$ is the two-dimensional torus consisting of one parameter subgroups with weights $\pm (a,1,2-a,-3)$.  All the curves that specialize to $C$ under a 1-PS with weights $(-a,-1,a-2,3)$ pass through the vertex of the cone, so $\lambda$ must have weights $(a,1,2-a,-3)$.  But then $\mu (x_0 x_2 - x_1^2 , \lambda ) = 2$ and $\mu (x_1x_3^2, \lambda ) = -5$, so as above,  $\mu^t ((q,f), \lambda ) > 0$, a contradiction.  The fact that the entire $t$-interval $( \frac{2}{9} , \frac{2}{5} )$ is contained in a GIT chamber follows exactly as above. (QED (4)). \end{proof}

\begin{remark}
Note that in the theorem, points that are strictly semi-stable on a wall may become stable in the adjacent chamber.  For instance, for $t=\frac{2}{3}$, the ribbon is semi-stable, and the strictly semi-stable points corresponding to curves with $A_8,A_9$ singularities degenerate to this curve.  For $\frac{6}{11} <t< \frac{2}{3}$, the ribbon is unstable, but the curves with $A_8,A_9$ singularities become stable (not just semi-stable).
\end{remark}

\begin{remark}
The argument above also determines semi-stability conditions at the GIT walls.
\begin{enumerate}
\item  At $t= \frac{6}{11}$, both irreducible cuspidal curves with hyperelliptic normalization and curves with a separating $A_7$ singularity are strictly semi-stable.  The orbit closure of either type of curve contains the point
    $$x_1^2+x_0x_2 = x_2^2x_3+x_0x_3^2 = 0 .$$
\item  At $t= \frac{2}{5}$, both irreducible nodal curves with hyperelliptic normalization and double conics are strictly semi-stable.  The orbit closure of either type of curve contains the union of a double conic and two rulings on the quadric cone, given by
    $$ q(x_0,x_1,x_2) = x_0x_3^2 = 0 .$$
\item  At $t= \frac{2}{9}$, both curves contained in a quadric cone and triple conics are strictly semi-stable.  The orbit closure of either type of curve contains the triple conic on  a quadric cone.
\end{enumerate}
\end{remark}

\subsection{Comparing the GIT quotients}
 We set
$$
W\gquot_t \SL(4):=W\gquot_{M(t)} \SL(4)=\operatorname{Proj}R(W,M(t))^{\SL(4)},
$$
where recall $W\gquot_{M(t)} \SL(4)$ is the categorical quotient of the semi-stable locus, and the equality on the right holds because $W^{nss} (t)$ is contained in the ample locus of $M(t)$.

As discussed in Section \ref{sectstability}, the GIT quotient $ \bP E \gquot_t \SL(4)$ makes sense as a categorical quotient for all $t$. However, for non-ample linearizations (i.e. $t\ge \frac{1}{2}$), it is not \emph{a priori} clear how to describe it in terms of the numerically (semi-)stable  points (e.g.~Rem. \ref{remexample}).  Here we note that Proposition \ref{StabilityAgrees} and Theorem \ref{mainthm2} allows us to interpret our numerical results from the previous section as honest GIT results on the resolution $W$, and then the expected properties of $ \bP E \gquot_t \SL(4)$ follow (as well as the connection between numerical stability and stability).

\begin{corollary}\label{corWPE}
For $t \in [0, \frac{2}{3}]$, $ \bP E \gquot_{L(t)} \SL(4) = W \gquot_{M(t)} \SL(4)$ and for both spaces, numerical (semi-)stablility agrees with Mumford (semi-)stability.    Moreover, the ring of invariant sections $R(\mathbb PE,L(t))^{\SL(4)}$ is finitely generated and $$\mathbb PE\gquot_{L(t)} \SL(4)=\operatorname{Proj}R(\mathbb PE,L(t))^{\SL(4)}.$$
\end{corollary}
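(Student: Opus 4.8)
The plan is to transfer everything to the Rojas--Vainsencher resolution $W$, where $M(t)$ is semiample and honest GIT applies, and then to pull the conclusions back along $\pi_1$. The starting point is the fact already contained in Proposition~\ref{StabilityAgrees} and Theorem~\ref{mainthm2} that $W^{ss}(t)=W^{nss}(t)$ lies inside $\pi_1^{-1}(U)$: every $M(t)$-semistable point is a $(2,3)$-complete intersection, hence sits over the locus $U$ on which $\pi_1$ is an isomorphism. Over $\pi_1^{-1}(U)$ one has $M(t)=\pi_1^\ast L(t)$ as linearized bundles, since there $\pi_2^\ast\Lambda$ and $\pi_1^\ast L(\tfrac23)$ both compute $\varphi_\infty^\ast\Lambda_\infty$ (Proposition~\ref{maptochow}) and $L(t)=\alpha L(0)+\beta L(\tfrac23)$ with the same $\alpha,\beta$ that define $M(t)$. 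By functoriality of the Hilbert--Mumford index this gives, for $x\in U$ with preimage $y$, the identity $\mu^{M(t)}(y,\lambda)=\mu^{L(t)}(x,\lambda)$; combined with Proposition~\ref{StabilityAgrees} it yields a $G$-equivariant isomorphism $\pi_1\colon W^{ss}(t)\xrightarrow{\ \sim\ }\bP E^{nss}(L(t))$, where $G=\SL(4)$.

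The heart of the matter is to show that on $\bP E$ numerical semistability coincides with Mumford semistability, $\bP E^{nss}(L(t))=\bP E^{ss}(L(t))$; the inclusion $\bP E^{ss}\subseteq\bP E^{nss}$ is the elementary direction of the numerical criterion, valid for any linearization. For the reverse, given $x\in\bP E^{nss}(L(t))$ I would take the corresponding $y\in W^{ss}(t)=W^{nss}(t)$, which \emph{is} Mumford semistable on $W$ since $M(t)$ is semiample with semistable locus inside its ample locus (as established after Proposition~\ref{StabilityAgrees}). Choosing an invariant $\tau\in H^0(W,M(t)^{\otimes n})^{G}$ with $\tau(y)\neq0$ and $W_\tau$ affine, restricting to $\pi_1^{-1}(U)$, transporting by $\pi_1$, and extending across the codimension $\ge 2$ locus $\bP E\setminus U$ (smoothness of $\bP E$) produces $\sigma\in H^0(\bP E,L(t)^{\otimes n})^{G}$ with $\sigma(x)\neq0$. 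The hard part is proving $\bP E_\sigma$ affine, which is exactly the pathology flagged in Remark~\ref{remexample}. The key point that rescues us is that every point of $\bP E\setminus U$ is numerically \emph{unstable} for all $t\le\tfrac23$ (the $1$-PS $(-3,1,1,1)$ of Proposition~\ref{NoReducibleQuadric}); by the elementary direction of the numerical criterion every invariant section must vanish there, so necessarily $\bP E_\sigma\subseteq U$. Hence $\bP E_\sigma=\pi_1(W_\tau)$ is isomorphic to the affine variety $W_\tau$, and $x$ is Mumford semistable. The stable case is identical, using that $\pi_1$ preserves orbit-closedness and stabilizer dimension on $\pi_1^{-1}(U)$.

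With numerical and Mumford (semi-)stability identified on both spaces, the $G$-equivariant isomorphism $\pi_1\colon W^{ss}(t)\xrightarrow{\sim}\bP E^{ss}(L(t))$ transports the good quotient $W^{ss}(t)\to W\gquot_{M(t)}G$ to a good quotient of $\bP E^{ss}(L(t))$; since good quotients are categorical and categorical quotients are unique, this gives $\bP E\gquot_{L(t)}G=W\gquot_{M(t)}G$. For finite generation and the $\operatorname{Proj}$ description I would prove $R(\bP E,L(t))^{G}=R(W,M(t))^{G}$. The inclusion $R(W,M(t))^{G}\hookrightarrow R(\bP E,L(t))^{G}$ is given by restriction to $\pi_1^{-1}(U)$ followed by Hartogs extension; on the other side, the usual GIT conclusions for $M(t)$ give $H^0(W,M(t)^{\otimes m})^{G}=H^0(W^{ss}(t),M(t)^{\otimes m})^{G}$, and via $\pi_1$ and the density of $\bP E^{ss}(L(t))\subseteq U$ in $\bP E$ the latter is identified with $H^0(\bP E,L(t)^{\otimes m})^{G}$. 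Tracing these maps shows the composite $R(W,M(t))^{G}_m\to H^0(\bP E^{ss}(t),L(t)^{\otimes m})^{G}$ is an isomorphism, forcing the inclusion to be an equality in every degree. Since $M(t)$ is semiample, $R(W,M(t))$ is finitely generated, hence so is its ring of invariants under the reductive group $G$; therefore $R(\bP E,L(t))^{G}$ is finitely generated and $\operatorname{Proj}R(\bP E,L(t))^{G}=\operatorname{Proj}R(W,M(t))^{G}=W\gquot_{M(t)}G=\bP E\gquot_{L(t)}G$. The only boundary subtlety is $t=0$, where $\beta=0$ and $M(0)=\pi_1^\ast\eta$; the argument is unchanged, as $\eta$ is semiample and $\bP E\setminus U$ remains numerically unstable.
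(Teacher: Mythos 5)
The central step of your argument has a genuine gap. You claim that, since $M(t)$ and $\pi_1^*L(t)$ agree as linearized bundles over $\pi_1^{-1}(U)$, functoriality of the Hilbert--Mumford index yields the identity $\mu^{M(t)}(y,\lambda)=\mu^{L(t)}(x,\lambda)$ for $y\in\pi_1^{-1}(U)$, and you use this identity (together with Proposition \ref{StabilityAgrees}) to produce the equivariant identification $W^{ss}(t)\cong\bP E^{nss}(L(t))$ on which everything else in your proof rests. But the Hilbert--Mumford index is not a local invariant of the linearization near $y$: it is the weight of the $\lambda$-action on the fiber at the \emph{limit point} $y_0=\lim_{s\to 0}\lambda(s)\cdot y$, and for precisely the one-parameter subgroups that matter this limit escapes $\pi_1^{-1}(U)$ (e.g.\ the limiting pair acquires a common factor, or the Hilbert/Chow limit differs from the naive limit in $\bP E$) -- this is exactly the locus where $W$ and $\bP E$, and the two linearizations, differ. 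Functoriality (\cite[iii), p.49]{GIT}) applies only to a linearization that is globally a pullback, and $\pi_2^*\Lambda$ is not globally identified with $\pi_1^*L(\tfrac{2}{3})$ on $W$. What actually holds is only the one-sided inequality $\mu^{M(t)}(y,\lambda)\le\mu^{L(t)}(x,\lambda)$ coming from Proposition \ref{NumericalComparison}, which gives $W^{ss}(t)\subseteq\pi_1^{-1}(\bP E^{nss}(t))$ -- that is, Proposition \ref{StabilityAgrees} itself and nothing more. A sanity check that your identity cannot be right: if it were, semistability on $W$ would follow formally from the numerical computations on $\bP E$, and the entire basin-of-attraction analysis of Theorem \ref{mainthm2} would be superfluous.

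The missing reverse inclusion $\pi_1^{-1}(\bP E^{nss}(t))\subseteq W^{ss}(t)$ is exactly what Theorem \ref{mainthm2} supplies: every point not explicitly destabilized by the computations of \S\ref{sectnumerical} (a set containing $\bP E^{nss}(t)$, since those computations are carried out on $\bP E$) is shown there, by explicit arguments on $W$, to be $M(t)$-semistable. This is the first line of the paper's proof; you cite Theorem \ref{mainthm2} only for the containment $W^{ss}(t)\subseteq\pi_1^{-1}(U)$, which is not where its force is needed. Once the two-sided identification $W^{ss}(t)=\pi_1^{-1}(\bP E^{nss}(t))$ is in place, the rest of your proposal is essentially sound and parallel to the paper's: your observation that invariant sections must vanish on the numerically unstable locus $\bP E\setminus U$ (so that nonvanishing loci transported from $W$ remain affine, giving Mumford semistability on $\bP E$), and your restriction-plus-Hartogs comparison of $R(\bP E,L(t))^{\SL(4)}$ with $R(W,M(t))^{\SL(4)}$, correspond to the paper's use of Lemma \ref{lemGIT} and the codimension-two argument. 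So the gap is localized, but it sits underneath every subsequent step and cannot be repaired without invoking Theorem \ref{mainthm2} in its substantive role.
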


\begin{proof}
The boundary cases $t=0$ and $t=\frac{2}{3}$ have been proven already.  For $t\in (0, \frac{2}{3} )$, $W^{ss} (M(t)) \subseteq \pi_1^{-1} ( \bP E^{nss} (L(t)))$ by Prop \ref{StabilityAgrees}.  On the other hand, in Theorem \ref{mainthm2} we showed that every curve that is not explicitly destabilized in \S \ref{sectnumerical} is in fact semi-stable in $W$, so $\pi_1^{-1} ( \bP E^{nss} (L(t))) \subseteq W^{ss} (M(t))$. By Prop.~\ref{NoReducibleQuadric}, we see that $\pi_1^{-1} ( \bP E^{nss} (L(t)))$ is contained in the locus where $\pi_1$ restricts to an isomorphism identifying $\bP E^{nss} (L(t))$ and $\pi_1^{-1} ( \bP E^{nss} (L(t)))$.  Thus the categorical quotient of $ \bP E^{nss} (L(t))$ agrees with the categorical quotient $W\gquot_{M(t)} \SL(4)$, which equals $\operatorname{Proj}R(M(t))^{\SL(4)}$.

Now consider the injective restriction maps:
$$
H^0(W,M(t))^{\SL(4)}\to H^0(W^{ss},M(t)|_{W^{ss}})^{\SL(4)}
$$
$$
H^0(\mathbb PE,L(t))^{\SL(4)}\to H^0(\mathbb PE^{nss},L(t)|_{\mathbb PE^{nss}})^{\SL(4)} .
$$
The map on the top is in fact surjective (up to possibly taking a higher tensor power of $M(t)$) by Lemma \ref{lemGIT}.  The map on the bottom is surjective as well.  This follows for $t \leq \frac{2}{9}$ by Lemma \ref{lemGIT}, and for $\frac{2}{9} < t \leq \frac{2}{3}$  since the complement of $\mathbb PE^{nss}$ has codimension at least two.  Since $\mathbb PE^{nss}$ is identified with $W^{ss}$, and $M(t) \vert_{W^{ss}} \cong L(t) \vert_{\mathbb PE^{nss}}$, we get the equality we need.

It follows immediately that $R(\mathbb PE,L(t))^{\SL(4)}$ is finitely generated, and gives the same projective variety as $R(W,M(t))^{\SL(4)}$.  It is also elementary to check from this equality of invariant sections, that  Mumford stability and numerical stability then agree on $\mathbb PE$, since this holds on $W$.    Thus we have $$\mathbb PE\gquot_{L(t)} \SL(4)=W\gquot_{M(t)}\SL(4) =\operatorname{Proj}R(M(t))^{\SL(4)}=\operatorname{Proj}R(L(t))^{\SL(4)}.$$
\end{proof}

We now compare the GIT quotients of $W$ to those of the Hilbert scheme.

\begin{theorem}
We have the following isomorphisms of GIT quotients:
\begin{enumerate}
\item  $\Chow_{4,1} \gquot \SL(4) \cong W \gquot_{M(\frac{2}{3})} \SL(4) .$
\item  $\Hilb^{m}_{4,1} \gquot \SL(4) \cong W \gquot_{M(t)} \SL(4)$, where $t=\frac{m-2}{m+1}$ for $2\le m \le 4$ and $t = \frac{2(m-2)^2}{3m^2 -9m+8}$ $\forall m \geq 5$.
\end{enumerate}
\end{theorem}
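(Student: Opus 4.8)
The plan is to derive both isomorphisms from the equivariant maps already at our disposal, matching semistable loci by the numerical criterion and then matching polarizations. Statement (1) requires essentially no new work. For $t=\frac{2}{3}$ one has $\alpha=0,\beta=1$ in the definition of $M(t)$, so $M(\frac{2}{3})=\pi_2^*\Lambda=c\,(\psi_\infty\circ\pi_2)^*\Lambda_\infty$ is the pullback to $W$ of the ample polarization $\Lambda_\infty$ on $\Chow_{4,1}$ under the birational morphism $\psi_\infty\circ\pi_2\colon W\to\Chow_{4,1}$. Since $W^{ss}(\frac{2}{3})=W^{nss}(\frac{2}{3})$ lies in the ample locus of $M(\frac{2}{3})$ (Proposition \ref{StabilityAgrees}), the invariant rings agree with those of $(\Chow_{4,1},\Lambda_\infty)$ via pullback; this is exactly the identification already made in the proof of Theorem \ref{mainthm2}(1), and gives (1).

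For (2), fix $m$ and let $t_m$ be the slope of $\varphi_m^*\Lambda_m$. By Proposition \ref{comparepehilb} this slope equals the value in the statement and, as classes on $\bP E$, $\varphi_m^*\Lambda_m=a_m\,L(t_m)$ with $a_m=\binom{m+1}{3}-\binom{m-2}{3}>0$. The decisive point is the identity, valid for the $m$-th Hilbert point of a complete intersection $y_0\in U\subseteq\bP E$ and every $1$-PS $\lambda$,
\[
\mu^{\Lambda_m}(\varphi_m(y_0),\lambda)
=\mu^{\varphi_m^*\Lambda_m}(y_0,\lambda)
=a_m\,\mu^{L(t_m)}(y_0,\lambda),
\]
where the first equality is functoriality of the Hilbert--Mumford index under the morphism $\varphi_m|_U$ and the second combines $\varphi_m^*\Lambda_m=a_m L(t_m)$ with Proposition \ref{computemu} (for fixed $\lambda$ the same minimal-weight representative of $[f]$ serves every linearization in sight). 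Since $a_m>0$, numerical $\Lambda_m$-(semi)stability of $\varphi_m(y_0)$ is equivalent to numerical $t_m$-(semi)stability of $y_0$.

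I would then match the semistable loci. On the one hand, a point of $\Hilb^m_{4,1}$ that is not a complete intersection is unstable (Proposition \ref{HilbertCompleteIntersection}); on the other, a point of $\bP E$ with $q,f$ sharing a linear factor is numerically unstable for every $t\le\frac{2}{3}$ (Proposition \ref{NoReducibleQuadric}). Combined with the index identity, it follows that $\varphi_m$ maps $\bP E^{nss}(t_m)$ onto $(\Hilb^m_{4,1})^{ss}$: every semistable Hilbert point is $\varphi_m(y_0)$ for a complete intersection $y_0$, and such a $y_0$ is then $t_m$-semistable. For $m\ge 3$ the $m$-th Hilbert point recovers the pair $(q,f\bmod q)$, so $\varphi_m$ is in fact an isomorphism $\bP E^{nss}(t_m)\to(\Hilb^m_{4,1})^{ss}$, pulling $\Lambda_m$ back to $a_m L(t_m)$. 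Via $\pi_1$, which identifies $W^{ss}(M(t_m))$ with $\bP E^{nss}(t_m)$ (Proposition \ref{StabilityAgrees}, Corollary \ref{corWPE}), we obtain an $\SL(4)$-equivariant isomorphism of semistable loci carrying $\Lambda_m$ to a positive multiple of $M(t_m)$.

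It remains to upgrade this to an isomorphism of categorical quotients, and here lies the main obstacle. Because $M(t)$ is not ample for $t\ge\frac{1}{2}$, one cannot directly identify $W\gquot_{M(t_m)}\SL(4)$ with $\operatorname{Proj}$ of an invariant ring or restrict sections freely to the semistable locus; this is precisely why the argument is routed through $W$ and Corollary \ref{corWPE}, which already established that $W^{ss}=W^{nss}$ lies in the ample locus of $M(t)$, that numerical and Mumford stability agree, and that $W\gquot_{M(t_m)}\SL(4)=\operatorname{Proj}R(W,M(t_m))^{\SL(4)}$. Granting this, and since $\Lambda_m$ is ample, Lemma \ref{lemGIT} identifies $R(\Hilb^m_{4,1},\Lambda_m)^{\SL(4)}$ with the invariants over $(\Hilb^m_{4,1})^{ss}$, and the equivariant isomorphism of semistable loci transports these to the invariants of $M(t_m)$ over $W^{ss}(M(t_m))$; passing to a Veronese to absorb $a_m$ yields the isomorphism of $\operatorname{Proj}$'s. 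The single degenerate value is $m=2$: then $t_2=0$ and $k_2=1$, so $\psi_2$ records only the quadric and both sides collapse to $\bP V_2\gquot\SL(4)$, a point, in agreement with $\overline M_4(\frac{8}{17})=\{\ast\}$. For $m\ge 3$ the argument is uniform and needs no case analysis by GIT chamber, since the index identity and the polarization matching hold verbatim at a wall such as $t_4=\frac{2}{5}$ as well as in the open chambers.
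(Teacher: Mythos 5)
There is a genuine gap at what you yourself call the decisive point: the identity $\mu^{\Lambda_m}(\varphi_m(y_0),\lambda)=\mu^{\varphi_m^*\Lambda_m}(y_0,\lambda)$. Functoriality of the Hilbert--Mumford index (\cite[iii), p.49]{GIT}) applies to an equivariant \emph{morphism}, and the index is computed at the limit point $\lim_{s\to 0}\lambda(s)\cdot y_0$, which must exist in the source. Your $\varphi_m|_U$ is a morphism only on the non-proper open set $U$, where this limit typically does not exist: the left-hand index is the weight at the flat limit in $\operatorname{Hilb}^m_{4,1}$, while the right-hand index (for the extended bundle $\varphi_m^*\Lambda_m=a_mL(t_m)$ on $\bP E$) is the weight at the limit in $\bP E$. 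These limits lie in two different compactifications of the orbit and are not related by $\varphi_m$, so there is no formal reason for the weights to agree, and in general they do not: the degree-$m$ weight sums match only when $in_\lambda(q)$ and $in_\lambda(f)$ (for the minimal representative $f$) are coprime, since otherwise $in_\lambda(q,f)_m$ strictly contains $in_\lambda(q)V_{m-2}+in_\lambda(f)V_{m-3}$ and acquires extra generators coming from S-pairs. This is precisely why the paper records only an \emph{inequality} between the two indices (Proposition \ref{NumericalComparison}, citing Benoist), and why Proposition \ref{StabilityAgrees} is a one-way inclusion whose reverse occupies the whole of Theorem \ref{mainthm2}. An inequality gives only one implication (Hilbert semistable $\Rightarrow$ numerically $t_m$-semistable); the inclusion $\varphi_m\left(\bP E^{nss}(t_m)\right)\subseteq \left(\operatorname{Hilb}^m_{4,1}\right)^{ss}$, which your ``isomorphism of semistable loci'' and the subsequent transport of invariant sections require, does not follow.

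The repair is to observe that no pointwise index comparison across $\varphi_m$ is needed, and this is how the paper proceeds. By Proposition \ref{HilbertCompleteIntersection} one has $\operatorname{Hilb}^{m,ss}_{4,1}\subseteq U_m$, and by Proposition \ref{NoReducibleQuadric} together with Corollary \ref{corWPE} one has $\bP E^{ss}(t)=\bP E^{nss}(t)\subseteq U$; hence both quotients are categorical quotients of (open subsets of) the common open set $U\cong U_m$, with $\varphi_m|_U^*\Lambda_m$ proportional to $L(t_m)|_U$ by Proposition \ref{comparepehilb}. Lemma \ref{lemGIT} identifies $\operatorname{Hilb}^m_{4,1}\gquot_{\Lambda_m}\SL(4)$ with $U_m\gquot_{\Lambda_m|_{U_m}}\SL(4)$, and Corollary \ref{corWPE} (which encapsulates the hard work on $W$, including the non-ampleness issue you correctly flag) identifies $U\gquot_{L(t_m)|_U}\SL(4)$ with $\bP E\gquot_{t_m}\SL(4)=W\gquot_{M(t_m)}\SL(4)$. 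Your final paragraph already invokes exactly these tools; once the middle of your argument --- the index identity and the induced identification $\bP E^{nss}(t_m)\cong\left(\operatorname{Hilb}^m_{4,1}\right)^{ss}$ --- is deleted and replaced by this comparison of quotients of $U$, the proof is correct and coincides with the paper's.
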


\begin{proof} (1) was established in the proof of Theorem \ref{mainthm2}.  (2)
Let $U \subset \mathbb PE$ be the open set parameterizing complete intersections (see Rem. \ref{remdefu}) and $U_m \subset \Hilb^m_{4,1}$ be the corresponding open subset of $\Hilb^m_{4,1}$.  By Proposition \ref{HilbertCompleteIntersection}, $\Hilb^{m,ss}_{4,1} \subset U_m$, hence $\Hilb^m_{4,1} \gquot_{\Lambda_m} \SL(4) \cong U_m \gquot_{\Lambda_m \vert_{U_m}} \SL(4)$ by Lemma \ref{lemGIT}.  The rational map $\varphi_m : \mathbb PE \dashrightarrow \Hilb^m_{4,1}$ restricts to an isomorphism $\varphi_m \vert_U : U \to U_m$, and $\varphi_m \vert_U^* \Lambda_m = L(t) \vert_U$, where $t$ is given by the formula above.  It follows from Lem.~\ref{lemGIT} and Cor.~\ref{corWPE} that
$ \Hilb^m_{4,1} \gquot \SL(4) \cong U_m \gquot_{\Lambda_m \vert_{U_m}} \SL(4)  \cong U \gquot_{L(t) \vert_U} \SL(4) \cong \mathbb PE \gquot_t \SL(4)$.
\end{proof}

%%%%%%%%%%%%%%%%%%%%%%%%%%%%%%%%%%%%%%
% s7
\section{Hassett--Keel Program}\label{sectm4}
So far, we have described the GIT quotients $\bP E\gquot_t\SL(4)$ parameterizing $(2,3)$-complete intersections in $\bP^3$, as well as the birational transformations among them as the linearization varies. To complete the proof of the Main Theorem stated in the introduction, we only need to relate these GIT quotients to the Hassett--Keel spaces $\overline{M}_4(\alpha)$. In fact, by \cite{g4ball} and \cite{maksymg4}, this is already known for the extremal values of the slope $t$ (see \eqref{end1} and \eqref{end2}). Now, using the GIT computation of the previous sections, we  will obtain in Theorem \ref{teopehk}  the   relationship for the intermediate cases.

 To prove the theorem, we will use some elementary properties of birational contractions (e.g.~\cite[\S1]{hukeel}).    Let $f:X \dashrightarrow Y$ be a birational map between normal projective varieties with $X$ $\mathbb{Q}$-factorial.  Let $(\pi_1,\pi_2):W\to X\times Y$  be a resolution of $f$, with $W$ projective (and $\pi_1$ birational).  We call $f$ a \emph{birational contraction} if every $\pi_1$-exceptional divisor is also a $\pi_2$-exceptional divisor.  In this case, for a $\mathbb Q$-Cartier divisor $D$ on $Y$, we define $f^*D$ to be $(\pi_1)_*(\pi_2^*D)$ and one can check that $H^0(Y,D)=H^0(X,f^*D)$.     These definitions are independent of the choice of resolution.

\begin{theorem}\label{teopehk}
Each of the log minimal models $\overline{M}_4 ( \alpha )$ for $\alpha \le\frac{5}{9}$ is isomorphic to one of the GIT quotients constructed above.  Namely, we have
$$ \overline{M}_4 ( \alpha ) \cong \bP E\gquot_t \SL(4) $$
where $t = \frac{34 \alpha - 16}{33 \alpha - 14}$ $\forall \alpha \in [\frac{8}{17} , \frac{5}{9} ]$.
\end{theorem}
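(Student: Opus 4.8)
The plan is to apply the birational-contraction formalism recalled immediately before the statement. Write $P_t := \bP E\gquot_t \SL(4)$, and let $A_t$ be the line bundle on $P_t$ to which $L(t)=\eta+th$ descends. By Corollary \ref{corWPE} we have $P_t=\Proj R(\bP E,L(t))^{\SL(4)}$, so $A_t$ is ample and $H^0(P_t,nA_t)=H^0(\bP E,nL(t))^{\SL(4)}$ for $n$ sufficiently divisible. The composite rational map $f_t:\overline{M}_4\dashrightarrow P_t$, built from the birational map $\bP E\dashrightarrow \overline{M}_4$ and the quotient map $\bP E\dashrightarrow P_t$, is birational because $\dim P_t = 24-15 = 9 = \dim\overline{M}_4$ and the generic canonical genus $4$ curve is a smooth $(2,3)$-complete intersection. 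The strategy is: (A) show $f_t$ is a birational contraction; (B) identify $f_t^*A_t$ with a positive multiple of $K_{\overline{M}_4}+\alpha\delta$; and (C) read off the isomorphism from $H^0(P_t,nA_t)=H^0(\overline{M}_4,n f_t^*A_t)$.

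Step (B) is the routine divisor computation. Using the standard relation $K_{\overline{M}_4}=13\lambda-2\delta$ together with the pullback formulas $\lambda=4\eta+4h$ and $\delta=33\eta+34h$ of \S\ref{ModuliSpace}, pulling $K_{\overline{M}_4}+\alpha\delta=13\lambda-(2-\alpha)\delta$ back along $\bP E\dashrightarrow\overline{M}_4$ yields
$$(33\alpha-14)\eta+(34\alpha-16)h=(33\alpha-14)\,L(t),\qquad t=\frac{34\alpha-16}{33\alpha-14}.$$
Since $33\alpha-14>0$ on $[\tfrac{8}{17},\tfrac{5}{9}]$, this says $f_t^*A_t=c(K_{\overline{M}_4}+\alpha\delta)$ with $c=(33\alpha-14)^{-1}>0$. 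The displayed relation is a monotone Möbius transformation carrying $[\tfrac{8}{17},\tfrac{5}{9}]$ onto $[0,\tfrac23]$ and matching both endpoints ($\alpha=\tfrac59\leftrightarrow t=\tfrac23$, $\alpha=\tfrac{8}{17}\leftrightarrow t=0$); since $P_t$ and $\overline{M}_4(\alpha)$ are both constant on an open VGIT chamber, it suffices to treat one parameter value per chamber.

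The heart of the argument, and the step requiring the most care, is (A): ruling out the extraction of a divisor by $f_t$. I would establish this by propagation from the endpoint $\alpha=\tfrac59$. There \eqref{end1} gives $P_{2/3}\cong\overline{M}_4(\tfrac59)$, so $f_{2/3}$ is the known Hassett--Keel birational contraction. As $t$ decreases across $(0,\tfrac23)$, Theorem \ref{mainthm2} describes the wall crossings of $P_t$ as two flips followed by a divisorial contraction. Pre-composing $f_{2/3}$ with a downstream flip preserves the birational-contraction property, since a flip is an isomorphism in codimension one and so alters no divisorial/exceptional structure; pre-composing with a downstream divisorial contraction also preserves it, since the contracted divisor is already the birational transform of a divisor on $\overline{M}_4$ (a birational contraction extracts no divisors). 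Hence $f_t$ is a birational contraction for every $t\in[0,\tfrac23]$. One could instead verify (A) directly by checking, against the contracted loci tabulated in Tables \ref{table1} and \ref{table2}, that every divisor of $P_t$ is the transform of a boundary or special divisor of $\overline{M}_4$; the propagation argument seems cleaner and less case-dependent.

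With (A) and (B) in hand, step (C) is immediate. As $\overline{M}_4$ is $\mathbb Q$-factorial, the formalism recalled before the statement gives $H^0(P_t,nA_t)=H^0(\overline{M}_4,nf_t^*A_t)=H^0(\overline{M}_4,nc(K_{\overline{M}_4}+\alpha\delta))$ for all $n$; Corollary \ref{corWPE} simultaneously yields finite generation of this ring. Taking $\Proj$ over $n$ and using that $\Proj$ is insensitive to the positive scaling $c$ gives
$$P_t=\Proj\bigoplus_n H^0(P_t,nA_t)=\Proj\bigoplus_n H^0\bigl(\overline{M}_4,n(K_{\overline{M}_4}+\alpha\delta)\bigr)=\overline{M}_4(\alpha),$$
which is the assertion; the two endpoints are supplied by \eqref{end1} and \eqref{end2}.
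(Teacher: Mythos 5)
Your Step (B) contains the fatal gap: the asserted identity $f_t^*A_t=c(K_{\overline{M}_4}+\alpha\delta)$ is false, not merely unproved. Your computation takes place entirely on $\bP E$, so what it actually shows is that $f_t^*A_t$ and $c(K_{\overline{M}_4}+\alpha\delta)$ have the same image under the pullback $g^*\colon\Pic(\overline{M}_4)_{\bQ}\to\Pic(\bP E)_{\bQ}$ along the (non-birational, $\PGL(4)$-fibered) map $g\colon\bP E\dashrightarrow\overline{M}_4$. But $g^*$ maps a rank-four space onto a rank-two space, and its kernel contains exactly the classes you need to control: $\delta_1$ and $\delta_2$. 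Indeed, no $(2,3)$-complete intersection has a separating node, so the locus in $\bP E$ of curves whose stable model lies in $\delta_1$ (resp.\ $\delta_2$) is the cuspidal (resp.\ ramphoid-cuspidal) locus, which has codimension at least two; hence $g^*\calO(\delta_1)$ and $g^*\calO(\delta_2)$ are trivial, and your computation determines $f_t^*A_t$ only modulo $\langle\delta_1,\delta_2\rangle_{\bQ}$ --- precisely the ambiguity that matters, since $\delta_1,\delta_2$ are the $f_t$-exceptional divisors. The paper's proof is devoted to resolving this ambiguity by intersection with test families: writing $s=1/t$, it obtains
$$\varphi^*(4s\eta+4h)=(34s-33)\lambda-(4s-4)\delta_0-(14s-15)\delta_1-(18s-21)\delta_2,$$
which is \emph{never} proportional to $K_{\overline{M}_4}+\alpha\delta=13\lambda-(2-\alpha)\delta$, since the coefficients $4s-4$, $14s-15$, $18s-21$ are pairwise equal for no common $s$. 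Concretely, your formula forces the coefficient of $\delta_1$ to equal that of $\delta_0$, contradicting the constancy of $\varphi$ along the elliptic-tail pencil $Z$ (with $Z\cdot\lambda=1$, $Z\cdot\delta_0=12$, $Z\cdot\delta_1=-1$): constancy gives $b_1=12b_0-a=14s-15$, whereas your identity gives $b_1=4s-4$, and these agree only at $t=\frac{10}{11}$, outside the relevant range.

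The repair is exactly the content of the paper's proof, and it cannot be skipped: (i) compute $b_1$ and $b_2$ via the curve $Z$ and the gluing map $j\colon\overline{M}_{2,1}\to\overline{M}_4$; (ii) observe that the discrepancy $(10s-11)\delta_1+(14s-17)\delta_2$ between $\varphi^*(4s\eta+4h)$ and $(34s-33)\lambda-(4s-4)\delta$ is an \emph{effective} combination of $\varphi$-exceptional divisors --- this is where the numerical hypothesis $t\le\frac{2}{3}<\frac{14}{17}$ enters --- so that adding it leaves the rings of sections unchanged; only then does $\Proj$ of the section ring identify $\bP E\gquot_t\SL(4)$ with $\overline{M}_4(\alpha)$. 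Your Step (A), propagating the birational-contraction property from $t=\frac{2}{3}$ across the walls, is a reasonable alternative to the paper's direct appeal to the stability description (though note, as a minor point, that your reduction ``one parameter value per chamber'' is circular: constancy of $\overline{M}_4(\alpha)$ on those intervals is part of what is being proved, and in any case it is not needed). But with Step (B) false as stated, Step (C) does not go through.
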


\begin{proof}
We argue similarly to the case $\alpha = \frac{5}{9}$, which is Theorem 3.4 in \cite{g4ball}.  First, by the description of the GIT stability, we get that the natural map $$\varphi : \overline{M}_4 \dashrightarrow \bP E\gquot_t \SL(4)$$  is a birational contraction for all $t\in\left(0,\frac{2}{3}\right]$.  We then write
$$ \varphi^* (4s\eta + 4h) = a \lambda - b_0 \delta_0 - b_1 \delta_1 - b_2 \delta_2 .$$
(using $s=\frac{1}{t}$ and the scalar $4$ to make the formulas more attractive). The computations in \S \ref{ModuliSpace} tell us that $a = 34s-33$ and $b_0 = 4s-4$.  To compute the coefficients $b_1$ and $b_2$, we proceed exactly as in \cite{g4ball}.  In particular, let $Z \subset \overline{M}_4$ be the curve obtained by gluing a fixed non-hyperelliptic curve $C$ of genus 3 to a varying elliptic tail.  By the results of \S 1.3 of \cite{g4ball}, the map $\varphi$ is regular and constant along $Z$, so $b_1 = 14s-15$.  Specifically, we see that the image of $Z$ is the point corresponding to the cuspidal curve with normalization $C$, which is
$t$-stable for all $t\in(0,\frac{2}{3}]$.  Similarly, we see that if $j: \overline{M}_{2,1} \to \overline{M}_4$ is the standard gluing map, then $j^* \varphi^* (4s\eta + 4h)$ is supported along the union of $\delta_1$ and the Weierstrass divisor, and hence $b_2 = 18s-21$. In short, we obtain
$$
 \varphi^* (4s\eta + 4h) = (34s-33) \lambda - (4s-4) \delta_0 - (14s-15) \delta_1 - (18s-21) \delta_2.
$$

Now, since $\delta_1$ and $\delta_2$ are $\varphi$-exceptional and $t \le \frac{2}{3} < \frac{14}{17}$, we have
$$
H^0 ( \overline{M}_4 , n \varphi^* (4s\eta + 4h))
$$
\begin{eqnarray*}
&\cong& H^0 ( \overline{M}_4 , n \varphi^* (4s\eta + 4h) + (10s-11) \delta_1 + (14s-17) \delta_2 ))\\
 & =& H^0 ( \overline{M}_4 , n((34s-33) \lambda - (4s-4)( \delta_0 + \delta_1 + \delta_2 ))) .
 \end{eqnarray*}
Thus, for $s = \frac{33 \alpha - 14}{34 \alpha - 16}$,
\begin{eqnarray*}
 \bP E \gquot_t \SL(4) &=& \operatorname{Proj} \bigoplus_n H^0 (\mathbb PE\gquot_t \SL(4),n(4s\eta+4h))\\
 &=& \Proj \bigoplus_n H^0 ( \overline{M}_4 , n \varphi^* (4s\eta + 4h)) \\
& \cong& \operatorname{Proj} \bigoplus_n H^0 \left( \overline{M}_4 , n \left(K_{\overline{M}_4} + \alpha \delta \right)\right) \\
&=& \overline{M}_4 \left( \alpha \right).
\end{eqnarray*}
\end{proof}

%%%%%%%%%%%%%%%%%%%%%%%%%%%%%%%%%%%%%%
\bibliography{g4bib}
\end{document}